\def\Or{          \mathcal O}
\def\eLL{          \mathcal L}
\def\mN{          \mathcal N}
\def\sC{          \mathscr C}
\def\mS{          \mathcal S}
\def\mP{					\mathcal P}
\def\fp{					\mathfrak p}
\def\mF{					\mathcal F}
\def\mE{					\mathcal E}
\def\a{         \alpha}
\def\b{         \beta}
\def\g{         \gamma}
\newcommand{\NN}{{\mathbb N}}
\newcommand{\RR}{{\mathbb R}}
\newcommand{\HH}{{\mathbb H}}
\newcommand{\TT}{{\mathbb T}}
\newcommand{\ZZ}{{\mathbb Z}}
\newcommand{\QQ}{{\mathbb Q}}
\newtheorem{theo}{\sc Theorem}[section]
\newtheorem{prop}[theo]{\sc Proposition}
\newtheorem{lemm}[theo]{\sc Lemma}
\newtheorem{coro}[theo]{\sc Corollary}
\newtheorem{conj}[theo]{\sc Conjecture}
\theoremstyle{definition}
\theoremstyle{remark}
\newtheorem{rema}[theo]{\sc Remark}
\numberwithin{equation}{section}
\begin{document}
\title{Badly approximable vectors on rational quadratic varieties}

\author{Jimmy Tseng}
\address{Jimmy Tseng, Department of Mathematics, The Ohio State University, Columbus, OH 43210}
\email{tseng@math.ohio-state.edu}

\begin{abstract}
Approximation in this paper is of vectors on the unit $d$-cube by the projection of integer lattice points onto the same cube.  We define badly approximable vectors on a rational quadratic variety and show that sets of these vectors, which are (naturally) indexed by $m \in \QQ$, are winning and strong winning in the sense of Schmidt games.  From the winning property, it follows that these sets have full Hausdorff dimension and, moreover, so does their intersection.  In most cases, these sets are known to be null sets.
\end{abstract}

\maketitle

\tableofcontents
\listoffigures

\section{Introduction}

In~\cite{GS}, A.~Gorodnik and N.~Shah prove, for approximation on rational quadratic varieties, the analog of the Khinchin theorem, an archetypal and seminal result in the theory of Diophantine approximation that relates approximation to summation;\footnote{For the precise statement of the Khinchin theorem and its generalization, the Khinchin-Groshev theorem, see, for example, Theorems~1 and~2 of~\cite{Dod}.} the result of Gorodnik and Shah does likewise for rational quadratic varieties and provides the motivation for the main results of this paper on badly approximable vectors.

Let us introduce the notion of approximation on rational quadratic varieties and the Gorodnik-Shah theorem.  Let $X:=X_m := \{w \in \RR^{d+1} \mid Q(w) = m\}$ for some $m \in \QQ$ where $Q$ is a rational, nondegenerate, indefinite, quadratic form.  Let $\|\cdot\|_2$ be the Euclidean norm and $\|\cdot\|$ be the sup norm on $\RR^{d+1}$.  Then define \[\partial X := \{x \in \RR^{d+1} \mid Q(x) = 0\} \cap C^d,\] where $C^d$ is the unit $d$-cube in $\RR^{d+1}$ (i.e. $C^d = \{ v \in \RR^{d+1} \mid \|v\|=1\}$).  Let $\fp:  \RR^{d+1} \backslash \{\mathbf{0}\}  \rightarrow C^d; x \mapsto \frac {x}{\|x\|}$ be radial projection, and let $\psi:(0, \infty) \rightarrow (0, \infty)$ be a measurable, quasi-conformal function.  Then, for any $Z \subset \ZZ^{d+1}$, we say, following~\cite{GS}, that a vector $v \in C^d$ is \textit{$(Z, \psi)$-approximable} if the inequality \[\|\fp(x) - v\| < \psi(\|x\|)\] has infinitely many solutions $x \in Z \backslash \{\mathbf{0}\}$.  Note that all vectors are first projected by $\fp$ onto $C^d$ before any approximation takes place and that, since $\partial X$, our main space of study, is the set of points in which a rational quadratic variety meets the unit cube, we speak of approximating points of $\partial X$ as approximation on a rational quadratic variety. 
Next define \[X(\ZZ) :=X_m(\ZZ) := X \cap \ZZ^{d+1}.\]  Typically, we shall discuss the $(X(\ZZ), \psi)$-approximability of vectors in $\partial X$.

Let $d\geq3$ and $G = O(Q)$ be the orthogonal group given by the quadratic form $Q$.   
The group $G$ acts on $C^d$ as follows:  $g \cdot \fp(w) = \fp(g w)$ where $w \in \RR^{d+1}$.  Under this action, $\partial X$ is a homogeneous space of $G$ and admits a unique $G$-semi-invariant probability measure $\mu_\infty$~\cite{GS}.  Then the part of the Gorodnik-Shah theorem, Theorem~1.2(i) of~\cite{GS}, that forms the background for us is the following:\footnote{In~\cite{GS}, the unit cube is replaced by the unit sphere, which has no effect on the result.  Thanks to N.~Shah for pointing this out.}

\begin{theo}\label{theoGS}
Let $d\geq3$ and $m \neq 0$.  Let $\psi:(0, \infty) \rightarrow (0, \infty)$ be a measurable quasi-conformal function.  If \[\int_1^\infty t^{d-2} \psi(t)^{d-1} dt = \infty,\] then $\mu_\infty$-a.e. $v \in \partial X$ is $(X_m(\ZZ), \psi)$-approximable.
\end{theo}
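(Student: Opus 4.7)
The plan is to prove this Khinchin-type divergence theorem via a Borel-Cantelli argument adapted to the probability measure $\mu_\infty$ on $\partial X$. Setting $A_x := \{v \in \partial X : \|\fp(x) - v\| < \psi(\|x\|)\}$ for each nonzero $x \in X_m(\ZZ)$, the conclusion is equivalent to $\mu_\infty(\limsup_{\|x\| \to \infty} A_x) = 1$.

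The next step is a volume estimate combined with a lattice point count. Since $Q(\fp(x)) = m/\|x\|^2 \to 0$, the projection $\fp(x)$ lies within distance $O(\|x\|^{-2})$ of $\partial X$ for large $\|x\|$, and the intersection of a sup-norm ball of radius $\psi(\|x\|)$ with the $(d-1)$-dimensional set $\partial X$ has $\mu_\infty$-measure comparable to $\psi(\|x\|)^{d-1}$. (When $\psi(\|x\|) \ll \|x\|^{-2}$ the ball may miss $\partial X$ altogether, but the integral hypothesis then forces much of the mass to come from the opposite regime.) Combined with the Duke--Rudnick--Sarnak / Eskin--McMullen counting theorem, which gives $\#\{x \in X_m(\ZZ) : \|x\| \leq T\} \asymp T^{d-1}$ for $d \geq 3$ and $m \neq 0$, a dyadic decomposition using the quasi-conformality of $\psi$ yields
\[
\sum_{x \in X_m(\ZZ) \setminus \{\mathbf{0}\}} \mu_\infty(A_x) \;\asymp\; \int_1^\infty t^{d-2} \psi(t)^{d-1}\, dt,
\]
which diverges by hypothesis.

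For the Borel--Cantelli conclusion I then need the pairwise correlation bound
\[
\sum_{\|x\|,\,\|x'\| \leq T} \mu_\infty(A_x \cap A_{x'}) \;\leq\; C \Bigl(\sum_{\|x\| \leq T} \mu_\infty(A_x)\Bigr)^2
\]
up to lower-order terms. I would obtain this from an effective equidistribution statement for the radial projections $\{\fp(x) : x \in X_m(\ZZ),\ \|x\| \sim T\}$ towards $\mu_\infty$, leveraging a spectral gap for the action of $G$ on $G/(G \cap SL_{d+1}(\ZZ))$; generic pairs $(x,x')$ satisfy $A_x \cap A_{x'} = \emptyset$, while pairs close along the diagonal are handled by a separate geometric bound on how tightly lattice points can cluster on $X_m$. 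The divergence form of Borel--Cantelli then gives $\mu_\infty(\limsup A_x) \geq 1/C > 0$, and this is upgraded to full measure by a zero-one law coming from the fact that $\limsup A_x$ is essentially invariant under an ergodic subaction of $G$ on $(\partial X, \mu_\infty)$, available because $\partial X$ is a single $G$-orbit with $G$-semi-invariant measure.

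The main obstacle will be the pairwise correlation estimate: quantifying how pairs of integer points on $X_m$ jointly project onto $\partial X$ requires either an effective mixing / spectral input or a delicate geometric analysis of thin sup-norm balls meeting the projective quadric $\partial X$, and this is typically the heart of Khinchin-type results in homogeneous settings. The volume computation and the lattice count are relatively standard, but converting their combined output into an almost-independence statement sharp enough for Borel--Cantelli is where the real work lies.
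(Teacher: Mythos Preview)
This theorem is not proved in the present paper. It is quoted as Theorem~1.2(i) of Gorodnik--Shah~\cite{GS} and serves purely as background motivation for the paper's own results on badly approximable vectors (Theorems~\ref{thmBALevelSurface} and~\ref{thmBALightCone}). There is therefore no ``paper's own proof'' to compare your proposal against.

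For what it is worth, your outline --- volume estimate, lattice-point count of order $T^{d-1}$, dyadic summation, quasi-independence plus divergence Borel--Cantelli, and a zero-one law from ergodicity --- is a plausible sketch of how a Khinchin-type result of this shape could be obtained, and this is broadly the architecture of~\cite{GS}. You have correctly identified the correlation/quasi-independence estimate as the crux. But be aware that the actual argument in~\cite{GS} leans heavily on homogeneous dynamics (Ratner-type equidistribution for unipotent flows and their effective/quantitative refinements) rather than on a purely spectral or elementary geometric pair-counting; the ``effective equidistribution'' you invoke is exactly the deep input, and your sketch treats it as a black box. If you intend to write this up, you would need to engage with that machinery directly.
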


\subsection{Badly approximable vectors}  Let $Z \subset \ZZ^{d+1}$.  In this paper, we study the following set \begin{align*}BA_{\partial X}^\psi(Z) :=  \{v \in \partial X  \mid  &\textrm{ there exists } c(v) > 0 \textrm{ such that, for all } x \in Z \backslash \{0\}, \\ &\|\fp(x) - v \| \geq c \psi(\|x\|)\},\end{align*} which we denote the \textit{set of badly $(Z, \psi)$-approximable vectors of $\partial X$} (and, informally, as the {\it set of badly approximable vectors}).  When $m \neq 0$ (in high enough dimensions), Theorem~\ref{theoGS} implies that $\mu_\infty(BA_{\partial X}^\psi(X_m(\ZZ))) =0$ for all those $\psi$ for which the condition of the theorem holds (the function $\psi(t) = t^{-1}$--our primary concern--is an example);\footnote{When $m =0$, there is no (known) analog of Theorem~\ref{theoGS}; however, this lack is immaterial for our result, Theorem~\ref{thmBALightCone}, because it does not matter from the point of view of Schmidt games (see Section~\ref{secSchmidtgames}) whether the strong winning set is $\mu_\infty$-null or not--and our result would not be trivial even if the set has full measure.} Example 5.1 of~\cite{GS}, however, shows that $BA_{\partial X}^\psi(X_m(\ZZ))$ can be nonempty for $\psi(t) = t^{-1}$. 

While badly approximable vectors on rational quadratic varieties have not been studied before (as far as the author knows), the--roughly speaking--dual object, very well approximable vectors, have been studied by C. Dru\c{t}u in~\cite{Dr} (see the Introduction of that paper for definitions).  In particular, the Hausdorff dimension of those sets are computed for certain $\psi(t)$ (Theorem~1.1 of~\cite{Dr}).

\subsection{Schmidt games}\label{secSchmidtgames}
W.~Schmidt introduced the games which now bear his name in~\cite{Sch2}.  This game and its variant are our main tools.  Let $S$ be a subset of a complete metric space $M$.  For any point $x \in M$ and any $r \in \RR_{>0}$, we denote the closed ball in $M$ around $x$ of radius $r$ by $B(x, r)$.  Even though it is possible for there to exist another $x' \in M$ and $r' \in \RR_{>0}$ for which $B(x,r) = B(x',r')$ as sets in $M$, there is no ambiguity for us, as we always assume that we have chosen (either explicitly or implicitly) a center and a radius for each closed ball.  Given a closed ball $W$, let \begin{align*}\rho(W) & \textrm{ denote its radius and} \\ c(W) & \textrm{ denote its center.}\end{align*}  

Schmidt games require two parameters:  $0 < \a <1$ and $0 < \b<1$.  Once values for the two parameters are chosen, we refer to the game as the $(\a,\b)$-game, which we now describe.  Two players, Player $B$ and Player $A$, alternate choosing nested closed balls $B_1 \supset A_1 \supset B_2 \supset A_2 \cdots$ on $M$ such that \begin{eqnarray}\label{eqnSchmidtGameRadiusRules}\rho(A_n) = \a \rho(B_n) \textrm{ and }\rho(B_n) = \b \rho(A_{n-1}).\end{eqnarray}  The second player, Player $A$, \textit{wins} if the intersection of these balls lies in $S$.\footnote{Completeness of a metric space is equivalent to the nested closed sets property:  thus this intersection is exactly one point.}  A set $S$ is called \textit{$(\a, \b)$-winning} if Player $A$ can always win for the given $\a$ and $\b$.  A set $S$ is called \textit{$\a$-winning} if Player $A$ can always win for the given $\a$ and any $\b$.  A set $S$ is called \textit{winning} if it is $\a$-winning for some $\a$.  
Schmidt games have two important properties for us~\cite{Sch2}: \medskip


\noindent$\bullet$ Countable intersections of $\a$-winning sets are again $\a$-winning.




\noindent$\bullet$ The sets in $\RR^m$ which are $\a$-winning have full Hausdorff dimension. \medskip


Recently, C.~McMullen defined in~\cite{Mc} a variant of the game:  strong-winning Schmidt games.  To define this variant, we modify Schmidt games as follows:  replace the requirement on radii of balls as stated in (\ref{eqnSchmidtGameRadiusRules}) with \[\rho(A_n) \geq \a \rho(B_n) \textrm{ and }\rho(B_n) \geq \b \rho(A_{n-1}).\]  Using this modification, the notions of $(\a,\b)$-strong winning, $\a$-strong winning, and strong winning for the subset $S$ are defined in the analogous way.\footnote{The intersection of the players' balls may contain more than one point, but Player $A$, by judicious choice of radii, can force the intersection to contain exactly one point.}  On compact metric spaces ($\partial X$ for example), strong winning is preserved by quasisymmetric homeomorphisms~\cite{Mc} (but see Remark~\ref{remaStrongWinningQuasiSym} for more about quasisymmetric homeomorphisms on metric spaces other than Euclidean spaces), while winning is merely preserved by bilipschitz homeomorphisms (see Lemma~\ref{lemmLocalBilipWinningSetsweak} and its remark and Theorem~1.1 of~\cite{Mc}).   Since bilipschitz homeomorphisms are quite a restrictive subclass of quasisymmetric homeomorphisms, strong-winning has considerable benefits over winning.  Moreover, strong-winning Schmidt games have the same two properties~\cite{Mc}:\medskip

\noindent$\bullet$ Countable intersections of $\a$-strong winning sets are again $\a$-strong winning.

\noindent$\bullet$ The sets in $\RR^m$ which are $\a$-strong winning have full Hausdorff dimension.

\subsection{Conjecture }


With Theorem~\ref{theoGS}, the aforementioned example in~\cite{GS}, and the analogy with the usual notion of badly approximable vectors in Euclidean space as supporting evidence, we conjecture that, for $\psi(t) = t^{-1}$, the set $BA_{\partial X}^\psi(X_m(\ZZ))$ (which is a null set for $m \neq 0$, as mentioned) has plenty of points:

\begin{conj} 
Let $\psi(t) = t^{-1}$ and $m \in \QQ$.  Then $BA_{\partial X}^\psi(X_m(\ZZ))$ is a (strong) winning subset of $\partial X$.
\end{conj}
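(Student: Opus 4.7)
The plan is to exhibit, for a suitable $\alpha$ and every $\beta$, a winning strategy for Player $A$ in the $(\alpha,\beta)$-strong game on $\partial X$ with target set $BA_{\partial X}^\psi(X_m(\ZZ))$. The strategy will force the unique point $v = \bigcap_n A_n$ to satisfy $\|\fp(x) - v\| \geq c\|x\|^{-1}$ for every nonzero $x \in X_m(\ZZ)$, with an explicit $c = c(\alpha,\beta) > 0$, so Player $A$'s task at each move is to steer the nested balls away from the \emph{danger regions} $D(x) := B(\fp(x), c\|x\|^{-1}) \cap \partial X$.

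The first step is a scale separation. If Player $B$ has just played $B_n$ of radius $r_n$, the $x$ whose $D(x)$ can actually interfere at stage $n$ are precisely those with $\|x\|$ in a bounded dyadic window around $c/r_n$: lattice points with much larger norm have $D(x)$ so small that any $A_n$ of radius $\alpha r_n$ automatically misses them, while those with much smaller norm were already dodged in an earlier round and remain outside the nested balls by construction. Thus at each stage only finitely many new obstructions, all on a single scale, actually need to be avoided.

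Next I would bound both the count and the shape of these obstructions. For the count, I would invoke lattice-point asymptotics on $X_m$ (Eskin--McMullen when $m \neq 0$; a direct estimate on primitive isotropic vectors when $m = 0$) together with a transverse projection estimate near $\partial X$, which collectively bound the number of relevant $x$ at stage $n$ by $O(c^{d-1})$. For the shape, a direct computation with $Q$ shows that when $m \neq 0$ the projection $\fp(x)$ sits off $\partial X$ at Euclidean distance of order $\sqrt{|m|}\|x\|^{-1}$, so $D(x)$ is either empty or a tubular neighborhood of a codimension-one slice of $\partial X$ of thickness $O(c\|x\|^{-1})$; when $m = 0$ the point $\fp(x)$ lies on $\partial X$ itself and $D(x)$ is simply an intrinsic ball there of the same radius. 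In either case the $(d-1)$-dimensional diameter of $D(x)$ along $\partial X$ is $O(c r_n)$.

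Combining these ingredients, a standard packing estimate shows that for $c$ chosen small enough in terms of $\alpha$ the relevant danger regions cover only a small fraction of $B_n \cap \partial X$, so Player $A$ can place $A_n$ avoiding all of them. The same strategy delivers strong winning in the sense of McMullen since Player $A$ there enjoys only more freedom in the radii. I expect the main obstacle to be the geometric step: verifying uniformly in $x$ that $D(x)$ really is thin along $\partial X$ even when the ray $\RR_{>0} x$ is nearly tangent to the cone $\{Q=0\}$. Handling this will require the nondegeneracy of $Q$ in a quantitative form, together with a covering of $\partial X$ by coordinate charts of uniformly bounded bilipschitz distortion, so that the ambient Euclidean picture of $D(x)$ transfers to the intrinsic metric on $\partial X$ used in the game.
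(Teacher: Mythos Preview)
Your overall framework---scale windows, then dodge the new obstructions at each stage---matches the paper's, but the heart of the argument is quite different, and your counting step has a real gap.

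The paper does not count lattice points at all. Instead it proves a Liouville-type \emph{separation} lemma (Propositions~\ref{propIntegerSepartionLW} and~\ref{propIntegerSepartionLC}): two points $x,x' \in X_m(\ZZ)$ with $q_2$-component norms at most $K$ satisfy $\|\fp(x)-\fp(x')\|_2 \geq 8/(K\kappa_0)$ unless, for $m\neq 0$, one has $x\approx x'$ (meaning the $q_1$- and $q_2$-components are separately parallel), or, for $m=0$, one has $x\sim x'$. The proof is a bare-hands algebraic expansion: writing $x=\langle w,u\rangle$, the relation $\|w\|_{q_1}^2 - \|u\|_{q_2}^2 = m$ forces $\|w/\|\tilde u\|_{q_2} - w'/\|\tilde u'\|_{q_2}\|_{q_1}^2$ to be a nonzero rational with denominator at most $sK^2$, hence bounded below by $(sK^2)^{-1}$. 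With the window $P_i$ chosen so that $K \sim 1/\rho(B_i)$, the doubled ball $BB_i$ can then meet at most one $\approx$-class; all members of a single class lie on a common $2$-plane through the origin, and so project to a single line $L$ in the face hyperplane $\mE$. Player~$A$'s move is to dodge this one line (for $m=0$, or for $d=2$, merely one point), which is carried out using the tangent-space geometry of $\partial X$ in Section~\ref{secMissLine}.

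Your route via Eskin--McMullen plus packing would require localizing the lattice-point count to a ball of radius $\sim r_n$ on $\partial X$ at height $\|x\|\sim 1/r_n$, i.e.\ controlling the number of integer points on $X_m$ inside a cone of angular aperture $\sim\|x\|^{-1}$. Eskin--McMullen gives only the global count $\sim T^{d-1}$; extracting an $O(1)$ bound in a window shrinking like $T^{-1}$ needs effective equidistribution with error $o(1)$ at that scale, amounting to a power saving of full exponent $d-1$, which you cannot simply cite. Your stated bound $O(c^{d-1})$ is also puzzling, since the number of lattice points in the window has nothing to do with the badly-approximable constant $c$. Two smaller geometric slips: since $Q(\fp(x)) = m/\|x\|^2$ and $\partial X$ is a regular level set, one has $\mathrm{dist}(\fp(x),\partial X)\asymp |m|/\|x\|^2$, not $\sqrt{|m|}/\|x\|$; and each individual $D(x)$ is just a small ball in $\partial X$, not a tubular neighborhood of a codimension-one slice. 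The substantive difficulty your plan does not confront is that for $m\neq 0$ several lattice points in the same window \emph{can} project arbitrarily close together---precisely the $\approx$-equivalent ones---so no packing argument based on a raw count can separate them; the paper resolves this not by counting but by observing that such points align along a single line, which is what makes a one-shot dodge possible.
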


\noindent Our main results (Theorems~\ref{thmBALevelSurface} and~\ref{thmBALightCone}) prove the conjecture.

\subsection{Statement of results}  For the usual Diophantine approximation (in Euclidean space), the analog of the conjecture is a classical and important result.\footnote{The winning assertion is classical and due to Schmidt~\cite{Sch3}.  For strong winning, see Section 5.1~of~\cite{ET}.}  We show in Section~\ref{secProofofConjecture} that the result also holds for rational quadratic varieties.



Let $m \in \QQ$ be as above.  The proof of the conjecture is different for $m=0$, the {\it light-cone case}, and $m \neq 0$, the {\it level-surface case}.  For the light-cone case, an alternate proof using group actions follows (with a little work) from~\cite{KW}.\footnote{Thanks to D.~Kleinbock for pointing this out.}  Our proof is different:  we do not use group actions, only geometry.  Our proof of the level-surface case is new.  We show the following:

\begin{theo}\label{thmBALevelSurface}
Let $\psi = t^{-1}$, $m \neq 0$, and $d \geq 2$.  Then  $BA_{\partial X}^\psi(X_m(\ZZ))$ is $\a$-strong winning and $\a$-winning.
\end{theo}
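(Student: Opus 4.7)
The plan is to play Schmidt's $(\alpha,\beta)$-game on the compact metric space $\partial X$ equipped with the metric inherited from the sup norm on $\RR^{d+1}$. For given $\beta \in (0,1)$, I will choose $\alpha$ sufficiently small (depending only on $\beta$, $Q$, and $d$) together with a constant $c = c(\alpha,\beta,Q,m) > 0$, and exhibit a strategy for Player A that forces the unique point $v^* \in \bigcap_n B_n$ to satisfy $\|\fp(x) - v^*\| \geq c/\|x\|$ for every $x \in X_m(\ZZ) \setminus \{\mathbf{0}\}$; this places $v^* \in BA_{\partial X}^\psi(X_m(\ZZ))$.

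The starting ingredient is a separation lemma for projections of distinct integer points on $X_m$: for $x \neq x' \in X_m(\ZZ)$,
\[
\|\fp(x) - \fp(x')\| \;\geq\; \frac{C_0}{\|x\|\,\|x'\|},
\]
with $C_0 > 0$ depending only on $Q$. This follows by combining the identity $\sin \angle(x,x') = \|x \wedge x'\|_2 / (\|x\|_2 \|x'\|_2)$ with the integrality bound $\|x \wedge x'\|_2 \geq 1$, which is valid because $x \wedge x'$ is a nonzero integer $2$-vector: if $x' = \lambda x$ then $Q(x') = \lambda^2 m = m$ forces $\lambda = \pm 1$, and the antipodal case $x' = -x$ yields $\|\fp(x) - \fp(x')\| = 2$ trivially. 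Norm equivalence on $\RR^{d+1}$ converts the Euclidean estimate to the sup-norm form.

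The strategy at stage $n$, with $B_n$ of radius $\rho_n$, is to designate as \emph{dangerous} those $x \in X_m(\ZZ)$ whose bad region $R(x) := \{v \in \partial X : \|\fp(x) - v\| < c/\|x\|\}$ has radius in $[\alpha\beta\rho_n,\, \alpha\rho_n]$ and whose projection $\fp(x)$ lies within distance $2\rho_n$ of the center of $B_n$; Player A then places $A_n \subset B_n$ of radius $\alpha\rho_n$ so that $A_n \cap R(x) = \emptyset$ for every dangerous $x$. Because the multiplicative ranges $[\alpha\beta\rho_n, \alpha\rho_n]$ tile the small positive reals as $n$ varies, each $x \in X_m(\ZZ)$ with $\|x\|$ large enough is handled at a unique stage, and its removal persists thereafter by the nesting $A_n \supset B_{n+1} \supset \cdots$; finitely many $x$ of small $\|x\|$ are dispatched during the first few stages once $\rho_1$ is fixed.

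The main obstacle is verifying feasibility of Player A's move: the number of dangerous $x$ at each stage must be bounded by a constant leaving room in $B_n$ for $A_n$ of the required radius. The separation lemma alone gives only $O((\rho_n \|x\|^2)^{d-1}) = O(\rho_n^{-(d-1)})$, which is unbounded as $n \to \infty$. My proposal for resolving this is to observe that the dangerous $x$'s at stage $n$ accumulate toward the isotropic rational points of $\partial X$ sitting inside $B_n$, namely $p = y/\|y\|$ for primitive $y \in \ZZ^{d+1}$ with $Q(y) = 0$ and $\|y\| \lesssim 1/\rho_n$; known counting estimates for integer points on the light cone $\{Q = 0\}$ give $\lesssim c^{d-1}$ such $p$ in $B_n$, so for $c$ small enough there is at most one cluster point per stage. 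Player A then defeats the entire tower of dangerous bad regions accompanying that cluster in a single move, by placing the center of $A_n$ at distance $\Omega(\rho_n)$ from every cluster point of $B_n$; a volume argument in a smooth chart of $\partial X$ confirms that such a position exists for $\alpha$ small. Iterating over $n$ delivers $v^* \in BA_{\partial X}^\psi(X_m(\ZZ))$, and the strong-winning conclusion comes from the same strategy after relaxing the radius equalities in the game to inequalities, since the strategy only requires that $\rho_n$ eventually dip below any prescribed threshold.
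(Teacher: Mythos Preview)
Your separation estimate $\|\fp(x)-\fp(x')\|\gtrsim 1/(\|x\|\,\|x'\|)$ is correct, and you correctly identify that it is too weak to bound the number of dangerous points at a stage. The proposed repair, however, does not work. The claim that the dangerous $x\in X_m(\ZZ)$ at stage $n$ accumulate near \emph{rational} isotropic points of $\partial X$ is false. Take $Q=2x^2+y^2-z^2$ and $m=1$: the negative Pell equation $z^2-2n^2=-1$ yields an infinite family $(n,0,z)\in X_1(\ZZ)$ whose projections $\fp(n,0,z)=(n/z,0,1)$ converge to $(1/\sqrt{2},0,1)\in\partial X$, an irrational point. So the ``known counting estimates on the light cone'' you invoke, which count primitive $y\in\ZZ^{d+1}$ with $Q(y)=0$, have no bearing on where the dangerous level-surface points actually sit. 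More generally the accumulation locus of an $\approx$-equivalence class (see below) is the intersection of a rational $2$-plane with the light cone, and that intersection has rational direction only when a certain ratio of values of $q_1,q_2$ is a rational square.

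What the paper does instead is exploit the splitting $Q=q_1-q_2$ to define $\langle w,u\rangle\approx\langle w',u'\rangle$ when $w\sim w'$ and $u\sim u'$ separately, and then prove (Proposition~\ref{propIntegerSepartionLW}) that \emph{non}-$\approx$-equivalent points with $\|u\|_{q_2}\leq K$ are already $\gtrsim 1/K$ apart after projection---one full power of $K$ better than your wedge-product bound. The extra saving comes from expanding $\|w/\|\tilde u\|_{q_2}-w'/\|\tilde u'\|_{q_2}\|_{q_1}^2$ and using that both vectors lie on the unit $q_1$-sphere, so that the cross term is a rational number with bounded denominator. With this in hand, the ball $BB_i$ of radius $2\rho_i$ can contain at most one $\approx$-class of dangerous points, and any such class lies on a single $2$-plane through the origin, hence on a single \emph{line} in the affine hyperplane $\mE$ containing the chosen face. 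Player~A then avoids not a point but this line, by choosing a direction in $T_p(\tilde\partial X)$ orthogonal to it and using that $\partial X$ is locally a graph over its tangent space (Lemma~\ref{lemmClosetoLinear}). Your ``avoid the cluster point'' move would not suffice here without an additional argument that the $\approx$-class is concentrated in a region of diameter $o(\rho_n)$, which you do not supply.

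A smaller issue: you write that you will choose $\alpha$ depending on $\beta$. For $\alpha$-winning the order is the opposite---$\alpha$ must be fixed first and the strategy must succeed for every $\beta$. In the paper $\alpha=1/(8c_\pi^2)$ is fixed at the outset and only the constant $c'$ in the induction depends on $\beta$.
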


Using Lemmas~\ref{lemmVarietyHasBilpMaps} and~\ref{lemmWinningFullHDManifolds}, it follows that 
\begin{coro}\label{corBALevelSurface}
Let $\psi = t^{-1}$, $m \neq 0$, and $d \geq 2$.\footnote{For $d=1$, Theorem~\ref{thmDrCor} gives an answer for certain $Q$ and $m$.}  Then  $BA_{\partial X}^\psi(X_m(\ZZ))$ has full Hausdorff dimension (i.e. $= d-1$).
\end{coro}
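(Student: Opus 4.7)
The plan is to combine Theorem~\ref{thmBALevelSurface} with the two cited lemmas, together with the fact (recorded in the excerpt) that $\alpha$-winning subsets of $\RR^n$ have full Hausdorff dimension. The upper bound $\dim_H BA_{\partial X}^\psi(X_m(\ZZ)) \leq d-1$ is immediate from the ambient geometry: since $Q$ is nondegenerate and $m \neq 0$, the level set $X_m$ is a smooth $d$-dimensional hypersurface in $\RR^{d+1}$, and its intersection with $C^d$ is a $(d-1)$-dimensional submanifold, so every subset of $\partial X$ has Hausdorff dimension at most $d-1$.

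For the matching lower bound I would transfer the winning property to Euclidean space through the charts supplied by Lemma~\ref{lemmVarietyHasBilpMaps}, which I expect to cover $\partial X$ by open sets each admitting a bilipschitz homeomorphism onto an open subset of $\RR^{d-1}$. Since $\alpha$-winning is preserved by bilipschitz homeomorphisms (Theorem~1.1 of~\cite{Mc}) and $BA_{\partial X}^\psi(X_m(\ZZ))$ is $\alpha$-winning on $\partial X$ by Theorem~\ref{thmBALevelSurface}, the image of this set under any such chart is an $\alpha$-winning subset of $\RR^{d-1}$, and therefore has Hausdorff dimension $d-1$ by Schmidt's theorem. Because bilipschitz maps also preserve Hausdorff dimension, pulling back yields a piece of $BA_{\partial X}^\psi(X_m(\ZZ))$ of Hausdorff dimension $d-1$, whence the whole set does. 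This is exactly the content that I expect Lemma~\ref{lemmWinningFullHDManifolds} to package.

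The only technical point requiring care is to make sure that a Schmidt game played inside a single chart---with its initial ball chosen small enough to be compactly contained in the chart domain---is equivalent, via the bilipschitz homeomorphism, to a game on the Euclidean image with the same parameter $\alpha$, so that the winning property really does transfer. This is a routine bookkeeping check once Theorem~\ref{thmBALevelSurface} is in hand, and I would not expect it to be a genuine obstacle: the substantive work of the corollary has already been done by the theorem and the two lemmas.
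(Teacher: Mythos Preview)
Your proposal is correct and follows essentially the same route as the paper: invoke Theorem~\ref{thmBALevelSurface}, use the bilipschitz charts of Lemma~\ref{lemmVarietyHasBilpMaps} to move into $\RR^{d-1}$, and then appeal to Lemma~\ref{lemmWinningFullHDManifolds}, which packages precisely the extension-and-tiling bookkeeping you flag (note that under a bilipschitz map the winning parameter drops from $\alpha$ to $c_\varphi^{-2}\alpha$, as in Lemma~\ref{lemmLocalBilipWinningSetsweak}, but this is harmless for the Hausdorff-dimension conclusion). One small correction to your upper-bound paragraph: $\partial X$ is by definition $\{Q=0\}\cap C^d$, the null cone intersected with the cube, not $X_m\cap C^d$; the fact that it is a $(d-1)$-dimensional smooth manifold comes from intersecting the light cone with a face of $C^d$ (see the footnote at the start of Section~\ref{secProofLevelSurfaceBA}), so the value of $m$ plays no role there.
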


As we shall see, an adaptation of the proof of Theorem~\ref{thmBALevelSurface} yields

\begin{theo}\label{thmBALightCone}
Let $\psi = t^{-1}$, $m=0$, and $d \geq 2$.\footnote{For $d=1$, Theorem~\ref{thmDrCor} gives a complete description.  Note that, since all winning subsets are dense, the only winning subset of a discrete metric space is the whole space.}   Then  $BA_{\partial X}^\psi(X_0(\ZZ))$ is $\a$-strong winning and $\a$-winning.
\end{theo}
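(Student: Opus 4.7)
The plan is to adapt the Schmidt-game strategy used for Theorem~\ref{thmBALevelSurface}, playing the $(\alpha,\beta)$-game on the compact metric space $\partial X_0$ (with the Euclidean metric inherited from $\RR^{d+1}$) for $\alpha>0$ small (to be chosen) and $\beta\in(0,1)$ arbitrary. The key new feature of the light-cone case is that $X_0$ is invariant under positive scaling: every $x\in X_0(\ZZ)\setminus\{0\}$ is a positive integer multiple of a unique primitive point $x_0\in P$, where $P$ denotes the set of primitive integer points of $X_0$, and $\fp(kx_0)=\fp(x_0)$ while $\|kx_0\|=k\|x_0\|$. For $\psi(t)=t^{-1}$ the Diophantine inequality $\|\fp(x)-v\|\geq c/\|x\|$ is therefore binding only at the primitive point of each ray, so the countable family of bad balls collapses to $\{B(\fp(x),c/\|x\|)\}_{x\in P}$.

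Write $\rho_n:=\rho(B_n)$ and set thresholds $T_n:=K/\rho_n$ for a large constant $K=K(\alpha,\beta,d)$. Player~$A$'s strategy in round~$n$ is to choose $A_n\subset B_n$ of radius $\alpha\rho_n$ disjoint from every bad ball $B(\fp(x),c/\|x\|)$ with $x\in P$ and $\|x\|\in(T_{n-1},T_n]$; the Diophantine constant $c>0$ will be fixed small in terms of $\alpha,\beta,K$ at the end. Points $x\in P$ with $\|x\|>T_n$ produce bad balls of radius $<c\rho_n/K$ that are negligible for all subsequent rounds once $K$ is large, and the finitely many points of small norm are disposed of in the opening rounds. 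The core of the argument is a counting estimate: the number of $x\in P$ with $\|x\|\in(T_{n-1},T_n]$ and $\fp(x)$ within distance $2\rho_n$ of $c(B_n)$ is $\lesssim K^{d-1}$. Granting this, each bad ball in round~$n$ has radius at most $c\rho_n/(\alpha\beta K)$, and a volume argument in the $(d-1)$-dimensional ball $B_n$ produces a safe sub-ball of radius $\alpha\rho_n$ provided $K^{d-1}\alpha^{d-1}$ and $c^{d-1}(\alpha\beta)^{-(d-1)}$ are small relative to $(1-\alpha)^{d-1}$. This is arranged by choosing $\alpha$ small depending on $K,d$, and then $c$ small depending on $\alpha,\beta,K$. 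The strong-winning conclusion is automatic, since the relaxation $\rho(A_n)\geq\alpha\rho(B_n)$ only grants Player~$A$ more flexibility.

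The main obstacle is the counting estimate on the light cone. One must combine the upper bound $\#\{x\in X_0(\ZZ):\|x\|\leq T\}\lesssim T^{d-1}$ (standard for indefinite nondegenerate rational quadrics with $d+1\geq3$ variables) with a uniform, ball-localized version on $\partial X_0$, so that the angular fraction $\rho_n^{d-1}/\mathrm{vol}(\partial X_0)$ yields the desired bound $\lesssim \rho_n^{d-1}T_n^{d-1}=K^{d-1}$; one must also separate primitive from imprimitive points explicitly, which has no analogue in the level-surface case since $X_m$ ($m\neq 0$) avoids the origin. Near the apex of the cone the density of directions can degenerate, so this localization step requires some care, but once it is in place the bookkeeping of the Schmidt game is essentially identical to that of Theorem~\ref{thmBALevelSurface}.
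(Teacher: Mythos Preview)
Your strategy diverges from the paper's in a substantive way, and the divergence is precisely where your sketch leaves its main gap.

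The paper does \emph{not} use a counting-plus-volume argument. Instead it proves an elementary separation lemma (Proposition~\ref{propIntegerSepartionLC}): for any two integer points $\langle w,u\rangle\not\sim\langle w',u'\rangle$ on the light cone with $\|u\|_{q_2},\|u'\|_{q_2}\le K$, the projections satisfy $\|\fp(\langle w,u\rangle)-\fp(\langle w',u'\rangle)\|_2\ge 8/(K\kappa_0)$. The proof is pure algebra: one expands $q_1(w/\|u\|_{q_2}-w'/\|u'\|_{q_2})$ and uses integrality of the coefficients and of $w,w'$ to bound the nonzero quantity from below. With the window chosen so that $K\sim\rho(B_i)^{-1}$, the doubled ball $BB_i$ can contain at most \emph{one} normalized point of the current window, and Player~$A$ need only avoid a single small ball per round. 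The geometry of how to do this on the curved hypersurface $\partial X$ (choosing a tangent direction, invoking Lemma~\ref{lemmClosetoLinear}) is inherited verbatim from the level-surface proof, so the light-cone case is genuinely a simplification: one misses a point rather than a line.

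Your plan, by contrast, asks for a uniform localized count $\#\{x\in P:\|x\|\le T,\ \fp(x)\in B(v,\rho)\}\lesssim (T\rho)^{d-1}$ via the global bound $\#\{x\in X_0(\ZZ):\|x\|\le T\}\lesssim T^{d-1}$ together with an equidistribution of directions on $\partial X$. You correctly flag this as ``the main obstacle'' and then do not address it. The equidistribution route is a deep input (ergodic theory or the circle method), is delicate for $d=2$, and is far from uniform in $v$ at the scales you need; it is not a step one can wave through. Ironically, the separation lemma above \emph{implies} your localized count by a packing argument, and is entirely elementary---so the right fix is to prove Proposition~\ref{propIntegerSepartionLC} (or cite it) rather than invoke equidistribution. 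Once you have it, the count collapses to $\le 1$ and the volume argument becomes unnecessary.

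Two smaller issues. First, your choice of constants is circular: you set $K=K(\alpha,\beta,d)$ and then choose $\alpha$ depending on $K$. For $\alpha$-winning, $\alpha$ must be fixed before $\beta$ is revealed. Second, your volume argument lives on the $(d-1)$-manifold $\partial X$, not in Euclidean space; finding a centre for $A_n$ that actually lies on $\partial X$ and avoids the bad balls is exactly the geometric work (tangent spaces, Lemma~\ref{lemmClosetoLinear}, the map $\pi$) that the paper spends Section~\ref{subsubsecHaCoC} and Section~\ref{secMissPoint} on, and which your sketch omits.
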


Using the same lemmas, it follows that 
\begin{coro}\label{corBALightCone}
Let $\psi = t^{-1}$, $m=0$, and $d \geq 2$.  Then  $BA_{\partial X}^\psi(X_0(\ZZ))$ has full Hausdorff dimension (i.e. $= d-1$).
\end{coro}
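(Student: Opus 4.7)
This is an immediate corollary of Theorem~\ref{thmBALightCone} via Lemmas~\ref{lemmVarietyHasBilpMaps} and~\ref{lemmWinningFullHDManifolds}, paralleling the treatment of Corollary~\ref{corBALevelSurface}. The plan is as follows. The upper bound $\dim_H BA_{\partial X}^\psi(X_0(\ZZ)) \leq d-1$ is automatic, since $\partial X = \{Q = 0\} \cap C^d$ is contained in a $(d-1)$-dimensional piecewise-smooth variety: $C^d$ is the $d$-dimensional boundary of the sup-norm cube, the light cone $\{Q = 0\}$ is a smooth codimension-$1$ cone (smooth off the origin, which does not meet $C^d$), and it intersects the interior of each facet of $C^d$ transversally along a $(d-1)$-dimensional submanifold.

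For the lower bound, I would invoke Theorem~\ref{thmBALightCone} to conclude that $BA_{\partial X}^\psi(X_0(\ZZ))$ is $\alpha$-winning on $\partial X$ for some $\alpha$. Lemma~\ref{lemmVarietyHasBilpMaps} supplies a bilipschitz homeomorphism between an open subset of $\RR^{d-1}$ and a regular open subset of $\partial X$ (concretely, a smooth parametrization restricted to a relatively compact domain is locally bilipschitz), and Lemma~\ref{lemmWinningFullHDManifolds} transports $\alpha$-winning subsets across such a map to $\alpha'$-winning subsets of $\RR^{d-1}$, where $\alpha'$ depends on $\alpha$ and on the bilipschitz constant. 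The classical Schmidt property then gives that any $\alpha'$-winning set in $\RR^{d-1}$ has Hausdorff dimension $d-1$, and since bilipschitz maps preserve Hausdorff dimension, the image of this set inside $\partial X$ also has Hausdorff dimension $d-1$.

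The only mild subtlety, compared to the level-surface case, is that $\partial X_0$ may exhibit singular behavior where the light cone meets the lower-dimensional skeleton (edges, corners) of $C^d$. This singular locus has dimension at most $d-2$ and therefore contributes nothing to the Hausdorff-dimension count, so one is free to apply Lemma~\ref{lemmVarietyHasBilpMaps} around any smooth interior point of a facet. Because winning sets are automatically dense in $\partial X$, no loss of generality arises from this restriction, and the desired lower bound follows, completing the proof.
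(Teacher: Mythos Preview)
Your proposal is correct and follows essentially the same route as the paper: the corollary is obtained directly from Theorem~\ref{thmBALightCone} together with Lemmas~\ref{lemmVarietyHasBilpMaps} and~\ref{lemmWinningFullHDManifolds}, exactly as in the level-surface case. One minor remark: your ``mild subtlety'' about the skeleton of $C^d$ is already absorbed by Lemma~\ref{lemmVarietyHasBilpMaps}, whose statement and proof cover \emph{every} point of $\partial X$ (including points lying on more than one face, via the map $\pi$ of Section~\ref{subsubsecHaCoC}), so no separate argument is needed there.
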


Finally, a model corollary, which follows immediately from these same lemmas and the properties of Schmidt games, is

\begin{coro}\label{coroModelGamesBARatVariety}
Let $\psi = t^{-1}$ and $d \geq 2$.  Then  $\cap_{m \in \QQ} BA_{\partial X}^\psi(X_m(\ZZ))$ is $\a$-strong winning, $\a$-winning, and has full Hausdorff dimension (i.e. $= d-1$).
\end{coro}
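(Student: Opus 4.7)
The proof should be immediate from the results and properties already stated. My plan is as follows.

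First I would observe that $\QQ$ is countable, so the displayed intersection is a countable intersection. By Theorem \ref{thmBALevelSurface} (for $m \neq 0$) and Theorem \ref{thmBALightCone} (for $m = 0$), each individual set $BA_{\partial X}^\psi(X_m(\ZZ))$ is $\a$-strong winning on $\partial X$. Applying the countable intersection property for $\a$-strong winning sets recorded in Section \ref{secSchmidtgames}, I would conclude that $\cap_{m \in \QQ} BA_{\partial X}^\psi(X_m(\ZZ))$ is itself $\a$-strong winning, and hence also $\a$-winning.

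To deduce the Hausdorff dimension statement, I would transport the game to Euclidean coordinates: Lemma \ref{lemmVarietyHasBilpMaps} supplies bilipschitz charts on $\partial X$, and Lemma \ref{lemmWinningFullHDManifolds} then upgrades $\a$-winning on the manifold to full Hausdorff dimension. Since $\partial X$ has dimension $d-1$, this yields $\dim_H\bigl(\cap_{m \in \QQ} BA_{\partial X}^\psi(X_m(\ZZ))\bigr) = d-1$.

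The only point needing verification, which I would expect to be the main (but minor) obstacle, is that the constant $\a$ produced by Theorems \ref{thmBALevelSurface} and \ref{thmBALightCone} can be chosen independently of $m \in \QQ$. Without such uniformity, the countable intersection property does not apply directly, since the intersection of sets that are each winning for different values of $\a$ need not be winning. This should follow by inspection of the proofs of those theorems, where the choice of $\a$ presumably depends only on the ambient dimension $d$ and the form $Q$, and not on the level $m$. Once this uniformity is confirmed, the corollary follows with no further work.
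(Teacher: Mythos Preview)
Your proposal is correct and matches the paper's own reasoning: the corollary is stated as following ``immediately from these same lemmas and the properties of Schmidt games,'' meaning Lemmas~\ref{lemmVarietyHasBilpMaps} and~\ref{lemmWinningFullHDManifolds} together with the countable intersection property. Your identification of the uniformity of $\a$ in $m$ as the only point needing attention is also on target; the paper confirms this explicitly in the paragraph following the corollary, where $\a$ is declared to depend only on $d$ (and, for nondiagonal forms, on $Q$) via $\a = 1/(8c_\pi^2)$.
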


\begin{rema}\label{remaStrongWinningQuasiSym}
Strong winning (and absolute winning, to be mentioned in the Conclusion) are preserved under a general class of homeomorphisms, which on Euclidean space are called quasisymmetric.  The conditions on these mappings for any complete metric space are enumerated in Section 2 of~\cite{Mc} (one distinguished subclass is composed of bilipschitz homeomorphisms).  For more details, see Theorems 1.2, 2.1, and 2.2 and the final remark of Section 2 from that paper.  Using these results from~\cite{Mc}, it follows immediately that, for any countable family of $k_i$-quasisymmetric homeomorphisms with uniformly bounded constants (namely that there exists a constant $k$ such that $k_i \leq k$ for all $i$), the intersection of their images of the set in Corollary~\ref{coroModelGamesBARatVariety} is still strong winning.  To make the analogous (but weaker) statement with $k_i$-bilipschitz homeomorphisms and winning, one can use Lemma~\ref{lemmLocalBilipWinningSetsweak} and its footnote.
\end{rema}

\noindent Note that $\a$ is a constant depending only on $d$ for diagonal (rational, nondegenerate, indefinite) quadratic forms; for its value, see the beginning of Section~\ref{secProofLevelSurfaceBA}.  For arbitrary (rational, nondegenerate, indefinite) quadratic forms, $\a$ depends also on the form; see Remark~\ref{rmkGeneralQuadForm} for its value.  Also note that $\a$ can be replaced by $1/2$ in all cases; see the Conclusion.

\subsubsection{Auxiliary results}  We have three auxiliary results that complement and provide context for our aforementioned results; these are proved in Section~\ref{secAuxObsonBA}.  The smallest possible dimension $d$ that makes sense for approximation is $d=1$.  For this dimension, $\partial X$ is a finite set, and we greatly strengthen our main results (for most cases).  To state this strengthening, let us assume, without loss of generality, that $Q = q - y^2$ where $q(x) = \a x^2$ (note that, since Q is indefinite, $\a > 0$, and, moreover, by renaming the variables and multiplying by $-1$ if necessary, we may assume, without loss of generality, that $\a\geq1$).\footnote{Here (in Theorem~\ref{thmDrCor}), we are further assuming that $Q$ is a diagonal (rational, nondegenerate, indefinite) quadratic form; for an arbitrary (rational, nondegenerate, indefinite) quadratic form, one can follow the proof in Remark~\ref{rmkGeneralQuadForm} and perform the analogous changes to the proof of Theorem~\ref{thmDrCor} in Section~\ref{secAuxObsonBA}.}  Then $\partial X$ is the four-element set $(\pm 1/\sqrt{\a}, \pm 1)\}$. 
We give a simple proof of the following:

\begin{theo}\label{thmDrCor}
Let $\psi(t) = t^{-1}$.  Let $d=1$ and $q(x) = \a x^2$. \begin{enumerate}
\item If $\sqrt{\a}$ is rational, then, for all $m \neq 0$, $BA_{\partial X}^\psi(X_m(\ZZ))= \partial X$ and, for $m=0$,  $BA_{\partial X}^\psi(X_0(\ZZ))= \emptyset$.
\item If $\sqrt{\a}$ is irrational, then, for $m$ small enough in absolute value (depending on $\a$), $BA_{\partial X}^\psi(X_m(\ZZ))= \partial X$.
\end{enumerate}

\end{theo}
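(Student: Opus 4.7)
The plan is to exploit the very explicit structure of $\partial X$ in dimension one, which consists of just the four points $(\pm 1/\sqrt{\alpha},\pm 1)$, together with elementary divisibility properties of the integer solutions of $\alpha a^2 - b^2 = m$. The unifying mechanism in both parts is a single principle: an integer combination of $a$ and $b$ derived from $Q(a,b) = m$ is forced to be nonzero and therefore has absolute value at least one.

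For part (1) I write $\sqrt{\alpha} = p/q$ in lowest terms with $p\geq q\geq 1$. I would first treat $m = 0$: the light-cone equation gives $X_0(\ZZ) = \{(qn,\pm pn) : n\in\ZZ\}$, and $\fp$ sends this family onto the four points of $\partial X$. Hence every $v\in\partial X$ admits integer solutions with $\fp(x) = v$ and $\|x\|\to\infty$, which makes the defining inequality $\|\fp(x) - v\|\geq c/\|x\|$ fail for every $c>0$, proving $BA_{\partial X}^{\psi}(X_0(\ZZ)) = \emptyset$. For $m\neq 0$ I would fix $v = (q/p, 1)$ (the other three points of $\partial X$ are handled by the obvious sign symmetry) and bound $\|\fp(a,b) - v\|_\infty$ from below for $(a,b)\in X_m(\ZZ)\setminus\{0\}$ by a case analysis on which coordinate realizes the sup norm and on the sign of $b$. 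The substantive case is $b \geq |a| > 0$, where $\|\fp(a,b) - v\|_\infty = |pa - qb|/(pb)$; here the identity $(pa - qb)(pa + qb) = mq^2 \neq 0$ forces the integer $pa - qb$ to be nonzero, hence $|pa - qb|\geq 1$ and $\|\fp(a,b) - v\|_\infty \geq 1/(p\|x\|)$. The other cases ($b < 0$, or $|a| > |b|$ when $\alpha > 1$) contribute distances bounded below by positive constants trivially, and the exceptional subcase $\alpha = 1$, $a > b \geq 0$ is handled by the analogous identity $(a-b)(a+b) = m \neq 0$ giving $|a - b| \geq 1$.

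For part (2) no game-theoretic reasoning is needed: the statement reduces to the diophantine fact that $X_m(\ZZ)\setminus\{0\}$ is empty for sufficiently small $|m|$. Writing $\alpha = p/q$ in lowest terms, suppose $(a,b) \in X_m(\ZZ)\setminus\{0\}$; then $pa^2 - qb^2 = mq$ is an integer. If $a = 0$, then $m = -b^2$ with $|b|\geq 1$, so $|m|\geq 1$. If $a\neq 0$, then $pa^2 - qb^2 \neq 0$, because $pa^2 = qb^2$ would force $b/a = \pm\sqrt{\alpha}$, contradicting the irrationality of $\sqrt{\alpha}$. Thus $pa^2 - qb^2$ is a nonzero integer, $|pa^2 - qb^2|\geq 1$, and $|m|\geq 1/q$. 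Contrapositively, for any $m\in\QQ$ with $|m| < 1/q$ one has $X_m(\ZZ)\setminus\{0\} = \emptyset$, making $BA_{\partial X}^{\psi}(X_m(\ZZ)) = \partial X$ vacuously. The only mildly delicate issue in the whole proof is organizing the case analysis in part (1) so that a single constant $c > 0$, depending only on $p$ and $q$, works simultaneously for every $(a,b)\in X_m(\ZZ)\setminus\{0\}$.
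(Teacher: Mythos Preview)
Your argument is correct. For part~(1) you and the paper are doing essentially the same thing: the paper observes that $yv\in\frac1p\ZZ$, so $\|yv-x\|<1/p$ forces $x=yv$ and hence $Q(x,y)=0$, then invokes Lemma~\ref{lemmProjBA}; your factorisation $(pa-qb)(pa+qb)=mq^2$ is the same arithmetic fact, and your case split replaces the appeal to that lemma. The $m=0$ case is identical in both treatments.

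For part~(2) your route is genuinely different and more elementary. The paper uses that $v=1/\sqrt{\a}$ is a quadratic irrational, hence badly approximable in the classical sense; it then computes $Q(x,y)$ for integer points $(x,y)$ close to the line $\RR v$ and shows $|Q(x,y)|\geq k\sqrt{\a}$, so for $|m|<k\sqrt{\a}$ no point of $X_m(\ZZ)$ lies near that line (points far from the line being trivially badly approximating). You instead observe that $pa^2-qb^2$ is a nonzero integer whenever $(a,b)\neq(0,0)$, forcing $|m|\geq 1/q$; thus $X_m(\ZZ)\setminus\{0\}$ is \emph{empty} for $|m|<1/q$ and the conclusion is vacuous. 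Your argument avoids any appeal to continued fractions or the theory of badly approximable reals, at the cost of a possibly smaller admissible range of $m$; the paper's argument allows $X_m(\ZZ)$ to be nonempty and still yields bad approximability, so it carries more Diophantine information and is in principle more robust. Both establish the theorem as stated.
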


The analogs of Theorems~\ref{thmBALevelSurface},~\ref{thmBALightCone}, and~\ref{thmDrCor} for $\psi(t)=t^{-s}$ where $s >1$ are immediate from those theorems (in the case of Theorem~\ref{thmDrCor} in which $m=0$ and $\sqrt{\a}$ is rational, this follows because the approximation is exact).\footnote{Since a set of full measure need not be winning, the analogs of Theorems~\ref{thmBALevelSurface} and \ref{thmBALightCone} for $s>1$ are not trivial.}  The complicated proof of Theorem~\ref{thmBALevelSurface}, however, is not necessary for $s >2$ and $m \neq 0$.  Using a simple argument, we show 

\begin{theo}\label{thmApproximationNotPossSbigger2}
Let $m \neq 0$ and $\psi(t) = t^{-s}$.  For $s \in (2, \infty)$, $BA_{\partial X}^\psi(X_m(\ZZ)) = \partial X$.
\end{theo}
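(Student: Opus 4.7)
The plan is to exploit the algebraic rigidity that the condition $Q(x)=m\neq 0$ imposes on integer points $x\in X_m(\ZZ)\setminus\{0\}$, while $v\in\partial X$ lies on the light cone $Q=0$. Set $u:=\fp(x)=x/\|x\|$. By homogeneity of $Q$, the equation $Q(x)=m$ yields $Q(u)=m/\|x\|^2$, while $v\in\partial X$ satisfies $Q(v)=0$, so
\[Q(u)-Q(v)=\frac{m}{\|x\|^2}.\]

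Next, I would apply the polarization identity $Q(u)-Q(v)=(u-v)^T A(u+v)$, where $A$ denotes the symmetric matrix of $Q$. Because $u,v\in C^d$ have sup-norm $1$, so that $\|u+v\|\le 2$, a routine application of H\"older's inequality combined with the equivalence of norms on $\RR^{d+1}$ produces a constant $C_Q>0$, depending only on $Q$ and $d$, such that
\[\frac{|m|}{\|x\|^2}=|Q(u)-Q(v)|\le C_Q\,\|u-v\|.\]
Rearranging gives $\|\fp(x)-v\|\ge (|m|/C_Q)\,\|x\|^{-2}$.

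To conclude, note that every $x\in\ZZ^{d+1}\setminus\{0\}$ satisfies $\|x\|\ge 1$, and hence $\|x\|^{-2}\ge\|x\|^{-s}$ whenever $s>2$. The constant $c(v):=|m|/C_Q$, which is in fact independent of $v$, then witnesses $v\in BA_{\partial X}^\psi(X_m(\ZZ))$; since $v\in\partial X$ was arbitrary, the theorem follows. There is no substantive obstacle: the entire argument reduces to the observation that, after radial projection, a nonzero integer level set of a quadratic form cannot approach the light cone faster than order $\|x\|^{-2}$. In particular, no Schmidt-game machinery is required, and the involved proofs of Theorems~\ref{thmBALevelSurface} and~\ref{thmBALightCone} are entirely bypassed in this regime.
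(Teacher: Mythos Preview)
Your proof is correct and cleaner than the paper's. Both arguments rest on the same observation---that $Q(\fp(x))=m/\|x\|^2$ while $Q(v)=0$, so $\fp(x)$ cannot sit too close to $\partial X$---but the executions differ. The paper argues by contradiction: it permutes coordinates so as to use the artificial splitting $\langle \cdot,\cdot\rangle_1$, invokes Lemma~\ref{lemmProjBA} to reduce to an inequality $\|yv-x\|<|y|^{1-s}$, and then expands $q(x)-y^2$ term by term to show it tends to $0$ as $|y|\to\infty$, contradicting $m\neq 0$. Your approach is direct: the polarization identity $Q(u)-Q(v)=(u-v)^{T}A(u+v)$ packages the same algebra in one line and delivers the uniform bound $\|\fp(x)-v\|\ge(|m|/C_Q)\,\|x\|^{-2}$ with a constant independent of $v$. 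A side benefit is that your inequality already gives $BA_{\partial X}^\psi(X_m(\ZZ))=\partial X$ at $s=2$, whereas the paper's contradiction, as written, needs $2-s<0$ to force $|q(x)-y^2|\to 0$. Both proofs share the feature recorded in Remark~\ref{remaKleinbocksRemark}: integrality of $x$ is used only through $\|x\|\ge 1$, so real points on $X_m$ with large norm are equally excluded.
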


\noindent Moreover, approximation in the case of Theorem~\ref{thmApproximationNotPossSbigger2} is not meaningful, as described in Remark~\ref{remaKleinbocksRemark}.

Finally, note that there are two natural sets of integer lattices points to approximate with:  $X(\ZZ)$ and $\ZZ^{d+1}$.   A priori, it may be possible that $BA_{\partial X}^\psi(\ZZ^{d+1})$ (for $\psi(t) =t^{-1}$) is already quite large;  we show, however, that this is not the case:

\begin{theo}\label{theoFullLatticeTooBig}
Let $\psi(t) = t^{-s}$ where $ s \in [0,1]$.  Then $BA_{\partial X}^\psi(\ZZ^{d+1})$ is empty.
\end{theo}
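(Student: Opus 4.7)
My plan is to apply Dirichlet's simultaneous approximation theorem directly to the line through $v$, with no reference to the quadratic structure of $\partial X$; the hypothesis $s \leq 1$ will be the precise threshold at which the rate produced by Dirichlet suffices.

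To show $BA_{\partial X}^\psi(\ZZ^{d+1})$ is empty, I must show that for every $v \in \partial X$ and every $c > 0$ there exists $x \in \ZZ^{d+1}\setminus\{0\}$ with $\|\fp(x) - v\| < c\,\|x\|^{-s}$. Since $\partial X \subset C^d$, the only property of $v$ that the proof will use is $\|v\| = 1$; in particular, the form $Q$ plays no role. For a parameter $N \geq 1$, Dirichlet's theorem in $\RR^{d+1}$ (with the sup norm) gives $n \in \{1,\dots,N\}$ and $p \in \ZZ^{d+1}$ with $\|nv - p\| \leq \epsilon := N^{-1/(d+1)}$. I set $x := p$ and write $e := x - nv$.

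From $\|v\|=1$ and the reverse triangle inequality, $\|x\| \in [n-\epsilon,\, n+\epsilon]$, so $x \neq 0$ once $\epsilon < 1$. The elementary identity
\[ \fp(x) - v \;=\; \frac{x - \|x\|\,v}{\|x\|} \;=\; \frac{(n-\|x\|)\,v + e}{\|x\|} \]
then yields the key estimate $\|\fp(x) - v\| \leq 2\epsilon/\|x\|$. Multiplying by $\|x\|^s$ and using $\|x\| \geq 1/2$ together with $s \leq 1$ to bound $\|x\|^{s-1} \leq 2^{1-s}$, I obtain
\[ \|\fp(x) - v\|\,\|x\|^s \;\leq\; 2^{2-s}\, N^{-1/(d+1)}, \]
which tends to $0$ as $N \to \infty$. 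Hence any prescribed $c > 0$ can be beaten by choosing $N$ sufficiently large, so no $v \in \partial X$ lies in $BA_{\partial X}^\psi(\ZZ^{d+1})$.

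There is no genuine obstacle: the argument is a single application of Dirichlet pigeonhole and interacts with the geometry of $\partial X$ only through the normalization $\|v\|=1$. The real content of the hypothesis is the role of $s \leq 1$, which is exactly what allows $\|x\|^{s-1}$ to be absorbed into a constant; for $s>1$ this absorption fails, and the more delicate analysis of Theorems~\ref{thmBALevelSurface}, \ref{thmBALightCone}, and \ref{thmApproximationNotPossSbigger2} becomes necessary.
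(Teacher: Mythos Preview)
Your proof is correct, and it takes a genuinely different route from the paper's. The paper first reduces to $s=1$, then invokes Lemma~\ref{lemmBAInfNormConvert} to reformulate membership in $BA_{\partial X}^\psi(\ZZ^{d+1})$ as the condition that $\|\,\|x\|v\,\|_{\ZZ}$ stay bounded away from $0$, and finally appeals to the structure of orbit closures for the toral translation $T_v$ on $\TT^{d+1}$ (a Kronecker-type argument) to show this never happens. You bypass both the reformulation lemma and the dynamics entirely, applying Dirichlet's simultaneous approximation theorem directly to the line $\RR v$ and reading off the estimate $\|\fp(x)-v\|\le 2\epsilon/\|x\|$ from the elementary identity you wrote down. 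Your argument is shorter, more constructive, and handles all $s\in[0,1]$ at once rather than reducing to $s=1$; the paper's dynamical viewpoint, on the other hand, fits the narrative of Section~3 linking bad approximability to nondense orbits. Both proofs share the observation (which the paper makes explicitly in the remark following Theorem~\ref{theoFullLatticeTooBig}) that only the normalisation $\|v\|=1$ is used and the quadratic form $Q$ plays no role.
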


\noindent This last theorem suggests that $X(\ZZ)$ is the natural set of integer lattice points to (badly) approximate with (at least for $\psi(t) =t^{-1}$) and that the geometry of the quadratic variety significantly affects the set of badly approximable vectors.

\subsection*{Acknowledgements}  I would like to thank Nimish Shah for pointing me to~\cite{GS}, for his helpful comments, and for his encouragement.  I would also like to thank Dmitry Kleinbock for stimulating discussions during his brief visit to Ohio State in June 2010, for continuing helpful discussions, and for encouragement.

\section{Proof of Conjecture}\label{secProofofConjecture}

The proof of the level-surface case is in Section~\ref{secProofLevelSurfaceBA}; the light-cone case, in Section~\ref{secProofLightConeBA}.  We begin with common notation and lemmas.  


\subsection{Notation} There is a natural splitting of $Q = q_1 - q_2$ where $q_1 := a_1  Y_1^2 + \cdots + a_k  Y_k^2$ and $q_2 := a_{k+1}  Y_{k+1}^2 + \cdots + a_{d+1}  Y_{d+1}^2$ are both positive-definite rational quadratic forms (i.e. $0 < a_i \in \QQ$).\footnote{The choice of a diagonal quadratic form here is without loss of generality because an arbitrary (rational, nondegenerate, indefinite) quadratic form $\tilde{Q}$ is equivalent to some diagonal (rational, nondegenerate, indefinite) quadratic form $Q$ (see Corollary~7.30 of~\cite{EKM}) and because the proof for $\tilde{Q}$ is virtually the same as the proof for $Q$ (see Remark~\ref{rmkGeneralQuadForm}).}   Let $a_i = \tilde{a_i}/s$ where $s$ is the least common multiple of the denominators of all of the $a_i$s (written in lowest terms).  Let $m \in \QQ$ be as in the Introduction.

The natural splitting of $Q$ corresponds to the direct sum $\RR^{d+1} = \RR^{k} \oplus \RR^{d-k+1}$ such that a vector $\langle w, u \rangle \in \RR^{d+1}$ is uniquely written as a vector $w$ (the $q_1$-component) in the (ordered) coordinates $ Y_1, \cdots,  Y_k$ and a vector $u$ (the $q_2$-component) in the (ordered) coordinates $ Y_{k+1}, \cdots,  Y_{d+1}$.  The splitting also yields two norms:  $\| \cdot\|_{q_1} := \sqrt{q_1(\cdot)}$ and $\| \cdot\|_{q_2} := \sqrt{q_2(\cdot)}$.  These norms satisfy a key relation for any element $\langle w, u \rangle \in X_m$:  \begin{eqnarray}\label{eqnnormrelation} \| w\|_{q_1}^2 -  \| u\|_{q_2}^2=m.\end{eqnarray}  Also, note that the balls of a fixed radius given by either norm are bounded convex sets and hence contain a finite number of integer lattice points.

Approximation in this context is by integer lattice points on $X_m$.  We partition this set of integer lattice points $X_m(\ZZ)$ in two ways:  the first is to collect the elements with the same $q_1$-components into the same coset \[X^{\|w\|_{q_1}}:= \{\langle w, u \rangle \in \ZZ^{d+1} \mid Q(\langle w, u \rangle) =m\}\] and the other is to collect the same $q_2$-components into the same coset \[X_{\|u\|_{q_2}}:= \{\langle w, u \rangle \in \ZZ^{d+1} \mid Q(\langle w, u \rangle) =m\}.\]  By (\ref{eqnnormrelation}), either type of coset has finite cardinality.  For $m=0$, these two ways of partitioning are identical; for $m \neq 0$, the distinction does not matter as either norm grows large.  

For our proof, we are only concerned with unions of cosets (over ranges of the $q_1$ or $q_2$-components, respectively); it is, as it will become evident, convenient to introduce the following notation:  a vector $\langle w, u \rangle$ is in the following union of cosets \[\bigcup_{C_1 \leq \|U\|_{q_2} \leq C_2} X_{\|U\|_{q_2}}\] if $\langle w, u \rangle \in X_{\|u\|_{q_2}}$ and $C_1 \leq \|u\|_{q_2} \leq C_2$ where $C_1$ and $C_2$ are constants, and likewise for the other type of partitioning.

Since we project vectors onto the unit cube, we cannot distinguish between multiplies; thus, given two vectors $v, v' \in \RR^\ell$, define $v \sim v'$ if there exists a nonzero real number $\g$ such that $v = \gamma v'$.  Two elements of $X(\ZZ)$ equivalent under $\sim$ are the same for us.  For the proof, however, we need three other (finer) equivalence relations (all of which are related to the natural splitting of $Q$).  
Define the equivalence relation $\approx$ on $\RR^{d+1}$ as follows:  $\langle w, u \rangle \approx \langle w', u' \rangle$ if there exists two nonzero real numbers $\g, \tilde{\g}$ such that $w = \g w'$ and $u = \tilde{\g} u'$.  Define the equivalence relation $\sim_1$ on $X(\ZZ)$ as follows\footnote{The subset of $X(\ZZ)$ where the $q_2$-component is the zero vector is, at most, a finite set, and we may put all of these elements into the same equivalence class; however, this class is immaterial for the proof.}:  $\langle w, u \rangle \sim_1 \langle w', u' \rangle$ if \[\frac{w}{\|u\|_{q_2}} = \frac{w'}{\|u'\|_{q_2}}.\]  And, likewise define, $\sim_2$ on $X(\ZZ)$ as follows:  $\langle w, u \rangle \sim_2 \langle w', u' \rangle$ if \[\frac{u}{\|w\|_{q_1}} = \frac{u'}{\|w'\|_{q_1}}.\]

Finally, besides the norms $\| \cdot\|_{q_1}$ and $\| \cdot\|_{q_2}$, we also use the sup norm $\| \cdot \|$, the usual Euclidean $2$-norm $\| \cdot \|_2$, and the norm on any vector $\langle w, u \rangle \in \RR^{d+1}$ given by $\|w\|_{q_1}+ \|u\|_{q_2}$.  Since all norms on $\RR^{d+1}$ are equivalent, we have that there exists a constant $c_s \geq 1$ (depending only on the norms) such that\footnote{The subscript $s$ is shorthand for the sum norm (given by the natural splitting) and the sup norm; it is not related to the least common multiple $s$.} \[\frac 1 {c_s} (\|w\|_{q_1}+ \|u\|_{q_2}) \leq \|\langle w, u \rangle\| \leq c_s (\|w\|_{q_1}+ \|u\|_{q_2}).\]  Likewise, there exists a constant $c_{2q_1} \geq 1$ such that $c_{2q_1}^{-1} \|\cdot\|_2 \leq \|\cdot\|_{q_1} \leq c_{2q_1} \|\cdot\|_2$; and, analogously, a constant $c_{2q_2} \geq 1$.


Define the positive constant $\kappa_0 := \frac{8 (\sqrt{10s}) c_s(1 + 1/\sqrt{3}) \max(c_{2q_1},c_{2q_2})}{\min(c_{q_1}, c_{q_2})}$ where the constants $c_{q_1}$ and $c_{q_1}$ are cases of the constant from Lemma~\ref{lemmEllipsoidMissesLine} for (and depending only on) $q_1$ and $q_2$, respectively.  (Note that $\kappa_0$ depends only on $Q$.)

\subsection{Integer lattice points repel}  To use Schmidt games, we must show that undesirable elements of certain subsets of the metric space that we are playing on repel each other--this is the key step to the use of games.  For our proof, the undesirable elements are the projections of elements of $X_m(\ZZ)$ onto the unit cube.  The precise statements that we need are

\begin{prop}\label{propIntegerSepartionLW} Let $m \neq 0$ and $K$ be a real number $ \geq 3\sqrt{|m|}$.  For any \[\langle w, u \rangle \not\approx \langle w', u' \rangle \in \bigcup_{3\sqrt{|m|} \leq \|U\|_{q_2} \leq K} X_{\|U\|_{q_2}},\] we have \[\bigg\|\frac{\langle w, u \rangle}{\|\langle w, u \rangle\|}-\frac{\langle w', u' \rangle}{\|\langle w', u' \rangle\|}\bigg\|_2 \geq \frac {8}{K \kappa_0}.\]
\end{prop}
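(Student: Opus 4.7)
The plan is to lower-bound $\left\|\frac{\langle w,u\rangle}{\|\langle w,u\rangle\|}-\frac{\langle w',u'\rangle}{\|\langle w',u'\rangle\|}\right\|_2$ by writing it as a difference of rescaled integer vectors and invoking the integer-separation estimate of Lemma~\ref{lemmEllipsoidMissesLine} in whichever of the two components ($q_1$- or $q_2$-) is not proportional. Abbreviating $v:=\langle w,u\rangle$ and $v':=\langle w',u'\rangle$, and setting $p:=\|v\|$ and $q:=\|v'\|$, the key identity is
\[
\frac{v}{p}-\frac{v'}{q} \;=\; \frac{qv-pv'}{pq},
\]
where $p$ and $q$ are positive integers because $v,v'$ are nonzero integer vectors. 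The problem thus reduces to a lower bound on the Euclidean norm of the integer vector $qv-pv'$ together with an upper bound on $pq$.

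First, the hypothesis $v\not\approx v'$ forces either $w\not\sim w'$ or $u\not\sim u'$. In the former case, $qw-pw'\in\ZZ^k$ is nonzero (any real $\gamma$ with $w=\gamma w'$ would have to equal $p/q$, contradicting $w\not\sim w'$), and similarly $qu-pu'\ne 0$ in the latter. In either case, Lemma~\ref{lemmEllipsoidMissesLine}, applied to the positive-definite form $q_1$ or $q_2$ respectively, provides a quantitative lower bound on the $q_i$-norm of the nonzero component, which converts to a Euclidean lower bound on $\|qv-pv'\|_2$ through the comparison constant $c_{2q_i}$ and the denominator $\sqrt{s}$ that appears when one clears denominators in the rational form $q_i$.

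Second, to upper-bound $pq$, I would exploit the quadric relation $\|w\|_{q_1}^2=\|u\|_{q_2}^2+m$ together with the standing hypothesis $\|u\|_{q_2}\ge 3\sqrt{|m|}$: this yields $\|w\|_{q_1}^2\le(10/9)\|u\|_{q_2}^2$, hence $\|w\|_{q_1}\le(\sqrt{10}/3)\|u\|_{q_2}$, so that $\|v\|\le c_s(\|w\|_{q_1}+\|u\|_{q_2})$ is bounded by a constant multiple of $K$, and symmetrically for $v'$. Dividing the lower bound on $\|qv-pv'\|_2$ by the upper bound on $pq$, and collecting the constants $c_s$, $c_{2q_i}$, $c_{q_i}$, $\sqrt{s}$, and $\sqrt{10}$ in the order they appear, should produce exactly the constant $\kappa_0$ from the statement.

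The main obstacle is the bookkeeping: the two cases $w\not\sim w'$ and $u\not\sim u'$ must be treated symmetrically so that the resulting estimate does not depend on which $\not\sim$ holds (this is why $\kappa_0$ is built out of $\min(c_{q_1},c_{q_2})$ and $\max(c_{2q_1},c_{2q_2})$), and one has to pass carefully between the $q_1$-, $q_2$-, sum-, and sup-norms, since Lemma~\ref{lemmEllipsoidMissesLine} is naturally stated in the $q_i$-norms while the proposition normalizes by the sup-norm and measures the final distance in the Euclidean norm.
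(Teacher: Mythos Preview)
Your approach contains a genuine gap: it yields a separation of order $1/K^2$, not the required $1/K$. With $p=\|v\|$, $q=\|v'\|$ positive integers and $qw-pw'$ (say) a nonzero integer vector, the trivial bound $\|qw-pw'\|_2\ge 1$ together with $p,q\le c_s(1+\sqrt{10}/3)K$ gives only
\[
\bigg\|\frac{v}{p}-\frac{v'}{q}\bigg\|_2=\frac{\|qv-pv'\|_2}{pq}\ \ge\ \frac{1}{pq}\ \gtrsim\ \frac{1}{K^2},
\]
which is off by a full factor of $K$ from $8/(K\kappa_0)$. This matters: in the Schmidt game the window is $K\asymp \rho(B_i)^{-1}$, so an $O(1/K^2)$ separation would be much smaller than $\rho(B_i)$ and would not force at most one $\approx$-class of normalized points into $BB_i$.

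Your invocation of Lemma~\ref{lemmEllipsoidMissesLine} does not do what you want here. That lemma compares $\|w-\gamma v\|_q$ with $\|w-v\|_q$ for two vectors on the \emph{same} ellipsoid $S_r$; it does not manufacture a lower bound for the $q_i$-norm of an arbitrary nonzero integer vector, and $qw$ and $pw'$ are not on a common $q_1$-sphere. In the paper's argument the lemma appears only at the very end, to pass from one normalization of $w,w'$ to another (a multiplicative rescaling by a bounded factor).

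The decisive idea you are missing is to exploit the quadric relation before using integrality. The paper first proves Lemma~\ref{lemIntegerSepartionLW}: since $\|w\|_{q_1}^2=\|u\|_{q_2}^2+m$, the rescaled vectors $w/\|\tilde u\|_{q_2}$ and $w'/\|\tilde u'\|_{q_2}$ lie on the \emph{unit} $q_1$-sphere, and the squared $q_1$-distance expands as
\[
2-\frac{2}{s\,\|\tilde u\|_{q_2}\|\tilde u'\|_{q_2}}\bigl(\tilde a_1 w_1w_1'+\cdots+\tilde a_k w_kw_k'\bigr),
\]
with the bracketed cross term an integer and with $s\|\tilde u\|_{q_2}^2$, $s\|\tilde u'\|_{q_2}^2$ also integers. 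It is this arithmetic structure of the \emph{squared} distance (denominator $\asymp K^2$) that leads to an $O(1/K)$ bound for the distance itself; only afterwards does Lemma~\ref{lemmEllipsoidMissesLine} convert the normalization $\|\tilde u\|_{q_2}$ into $\|\langle w,u\rangle\|$. The proposition then follows by applying the lemma in the $q_1$-component if $w\not\sim w'$, and by the symmetric argument (replacing $Q$ by $-Q$, which swaps the roles of $q_1$ and $q_2$ and forces $K\mapsto 2K$) if $u\not\sim u'$. Your ``clear denominators and use $\|\text{integer vector}\|\ge 1$'' shortcut skips this normalization-to-the-unit-sphere step and thereby loses the crucial factor of $K$.
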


\begin{prop}\label{propIntegerSepartionLC} Let $m =0$ and $K$ be a real number $ \geq 1$.  For any \[\langle w, u \rangle \not\sim \langle w', u' \rangle \in \bigcup_{1 \leq \|U\|_{q_2} \leq K} X_{\|U\|_{q_2}},\] we have \[\bigg\|\frac{\langle w, u \rangle}{\|\langle w, u \rangle\|}-\frac{\langle w', u' \rangle}{\|\langle w', u' \rangle\|}\bigg\|_2 \geq \frac {8}{K \kappa_0}.\]
\end{prop}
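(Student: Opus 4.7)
The plan is to mirror the strategy used for Proposition~\ref{propIntegerSepartionLW}, with the light-cone identity $\|w\|_{q_1}=\|u\|_{q_2}$ (the $m=0$ case of~(\ref{eqnnormrelation})) playing the role of the condition $\|u\|_{q_2}\geq 3\sqrt{|m|}$ in the level-surface case. In particular, this identity yields the uniform sup-norm bound $\|\langle w,u\rangle\|\leq c_s(\|w\|_{q_1}+\|u\|_{q_2})=2c_s\|u\|_{q_2}\leq 2c_sK$, and an analogous Euclidean bound, so that both $\langle w,u\rangle$ and $\langle w',u'\rangle$ have size of order at most $K$ in every relevant norm.

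The problem then reduces, via the identity
\[\fp(x)-\fp(x')=\frac{x-\lambda x'}{\|x\|},\qquad \lambda=\frac{\|x\|}{\|x'\|},\]
to producing the lower bound $\|x-\lambda x'\|_2\geq 8\|x\|/(K\kappa_0)$, and in particular to a good lower bound on $\mathrm{dist}_2(x,\RR x')$. The naive integer estimate $\|x\wedge x'\|_2\geq 1$ only delivers $\mathrm{dist}_2(x,\RR x')\geq 1/\|x'\|_2\asymp 1/K$, which is one factor of $K$ too weak whenever $\|x\|$ is itself of order $K$; the argument must therefore exploit that both $x$ and $x'$ lie on the null cone of $Q$.

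I would split into two cases based on the polar form $B$ of $Q$. When $B(x,x')\neq 0$, the identity $Q(x-\mu x')=-2\mu B(x,x')$ (which uses $Q(x)=Q(x')=0$) combined with rationality ($sB$ is integer-valued on $\ZZ^{d+1}\times\ZZ^{d+1}$) forces $|Q(x-\mu x')|\geq 2|\mu|/s$ at the optimal $\mu$. Since $|Q(y)|$ is bounded above by a form-dependent multiple of $\|y\|_2^2$, this produces a square-root lower bound on $\|x-\mu x'\|_2$ in terms of $|\mu|=\|x\|/\|x'\|$, and straightforward bookkeeping shows it is strong enough to absorb the required $\|x\|/K$ scaling across all sizes of $\|x\|$. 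When $B(x,x')=0$, the span of $x$ and $x'$ is a totally isotropic $2$-plane; here the hypothesis $x\not\sim x'$ combined with $\|w\|_{q_1}=\|u\|_{q_2}$ forces at least one of the component pairs $(w,w')$ or $(u,u')$ to be linearly independent (in $\ZZ^k$ or $\ZZ^{d-k+1}$ respectively), and Lemma~\ref{lemmEllipsoidMissesLine} applied to the corresponding positive-definite form $q_1$ or $q_2$ delivers the needed lower bound, producing the factor $\min(c_{q_1},c_{q_2})$ in $\kappa_0$.

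I expect the isotropic case to be the principal obstacle: two integer rays inside a totally isotropic $2$-plane can be nearly parallel with no direct obstruction from $Q$ itself, so all of the work in this case is carried by Lemma~\ref{lemmEllipsoidMissesLine}. The remaining task is routine: converting between $\|\cdot\|$, $\|\cdot\|_2$, and $\|\cdot\|_{q_i}$ using the constants $c_s$, $c_{2q_1}$, $c_{2q_2}$, and collecting the numerical factors ($\sqrt{10s}$ and $1+1/\sqrt 3$) produced by the estimates above, packages everything into the constant $\kappa_0$ and yields the stated bound $8/(K\kappa_0)$.
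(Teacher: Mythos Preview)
Your approach diverges from the paper's, and the isotropic branch has a genuine gap.

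The paper never splits on whether the polar form $B(x,x')$ vanishes. Its argument rests entirely on Lemma~\ref{lemIntegerSepartionLC}: because $m=0$ gives $\|w\|_{q_1}=\|u\|_{q_2}$, the rescaled vectors $w/\|u\|_{q_2}$ and $w'/\|u'\|_{q_2}$ both lie on the unit $q_1$-ellipsoid, and when they are distinct one expands the squared $q_1$-distance and uses that $\tilde a_1 w_1 w'_1+\cdots+\tilde a_k w_k w'_k\in\ZZ$ to extract a lower bound of order $1/(\|u\|_{q_2}\|u'\|_{q_2})$. Only \emph{after} this integrality step does Lemma~\ref{lemmEllipsoidMissesLine} enter, and solely to transfer the bound from the $\|u\|_{q_2}$-normalization to the sup-norm normalization. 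The proposition itself is then proved by the trichotomy on $\sim_1,\sim_2$: if $\not\sim_1$ apply Lemma~\ref{lemIntegerSepartionLC} on the $q_1$-side, if $\not\sim_2$ apply its $q_2$-analogue, and if both $\sim_1$ and $\sim_2$ hold one checks directly that this forces $\sim$, contradicting the hypothesis.

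Your non-isotropic branch, via $Q(x-\mu x')=-2\mu B(x,x')$ and $|sB(x,x')|\geq 1$, is a legitimate alternative in that case. But in the isotropic branch you assert that ``Lemma~\ref{lemmEllipsoidMissesLine} \ldots\ delivers the needed lower bound.'' It does not: that lemma only compares $\|w-\gamma v\|_q$ to $\|w-v\|_q$ for two points on a common ellipsoid; it supplies no absolute lower bound on $\|w-v\|_q$ itself. You still need a separate integrality input on the positive-definite side---precisely the content of Lemma~\ref{lemIntegerSepartionLC}---to obtain separation of order $1/K$ (your own remark that the naive wedge estimate loses a factor of $K$ applies equally well inside a totally isotropic $2$-plane). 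Once you supply that ingredient, your $B(x,x')$ dichotomy becomes superfluous, since the component-wise integrality argument already covers both cases uniformly; this is why the paper's proof is shorter.
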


The key idea needed to prove these propositions is to use the natural splitting given by $Q$:

\begin{lemm}\label{lemIntegerSepartionLW}Let $m \neq 0$ and $K$ be a real number $ \geq 2\sqrt{|m|}$.  For any \[\langle w, u \rangle, \langle w', u' \rangle \in \bigcup_{2\sqrt{|m|} \leq \|U\|_{q_2} \leq K} X_{\|U\|_{q_2}}\] such that $w \not\sim w'$, we have \[\bigg\|\frac{w}{\|\langle w, u \rangle\|}-\frac{w'}{\|\langle w', u' \rangle\|}\bigg\|_2 \geq \frac {16}{K \kappa_0}.\]
\end{lemm}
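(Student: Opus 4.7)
The plan is to exploit the natural splitting $Q=q_1-q_2$ and reduce the bound to a ``distance from an integer vector to a rational line'' estimate in $(\RR^k,q_1)$, which is precisely what Lemma~\ref{lemmEllipsoidMissesLine} supplies. Write $A:=\|\langle w,u\rangle\|$ and $A':=\|\langle w',u'\rangle\|$.

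First I would use the level-surface identity~(\ref{eqnnormrelation}) together with the hypothesis $2\sqrt{|m|}\le \|u\|_{q_2},\|u'\|_{q_2}\le K$ to deduce the two-sided bound
\[\sqrt{3|m|}\le \|w\|_{q_1},\,\|w'\|_{q_1}\le \tfrac{\sqrt{5}}{2}K.\]
In particular $w'\neq 0$, so $\RR w'$ is a genuine line through the origin. Next, extract a factor of $A^{-1}$ by writing $\frac{w}{A}-\frac{w'}{A'}=\frac{1}{A}\bigl(w-\tfrac{A}{A'}w'\bigr)$, whence
\[\bigg\|\frac{w}{A}-\frac{w'}{A'}\bigg\|_2\ \ge\ \frac{1}{A}\inf_{\lambda\in\RR}\|w-\lambda w'\|_2.\]
This reduces matters to (a) a lower bound on the distance from the integer vector $w$ to the line $\RR w'$, and (b) an upper bound on $A$.

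For (a), since $w,w'\in \ZZ^k$, $w'\neq 0$, and by hypothesis $w\not\sim w'$, I would invoke Lemma~\ref{lemmEllipsoidMissesLine} applied to the positive-definite form $q_1$ to obtain an estimate of the form
\[\inf_{\lambda\in\RR}\|w-\lambda w'\|_{q_1}\ \ge\ \frac{c_{q_1}}{\|w'\|_{q_1}},\]
then convert to the Euclidean norm via $\|\cdot\|_2\ge c_{2q_1}^{-1}\|\cdot\|_{q_1}$ and substitute the first-step upper bound on $\|w'\|_{q_1}$. For (b), the comparison $\|\cdot\|_\infty\le c_s(\|w\|_{q_1}+\|u\|_{q_2})$ together with the first-step bounds yields $A\le c_s(1+\tfrac{\sqrt 5}{2})K$. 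Assembling these four ingredients and packaging the resulting constants into $\kappa_0$ produces the claimed inequality.

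The real content is the integer-point-avoids-rational-line input in step~(a); everything else is bookkeeping. The geometric source of that input is the observation that, because the coefficients of $q_1$ are rational with common denominator $s$, the Gram matrix entries $q_1(w)$, $q_1(w')$, and $B_{q_1}(w,w')$ all lie in $s^{-1}\ZZ$ for any $w,w'\in\ZZ^k$, so the Gram determinant $q_1(w)q_1(w')-B_{q_1}(w,w')^2$ lies in $s^{-2}\ZZ$; the hypothesis $w\not\sim w'$ forces this determinant to be strictly positive, hence $\ge s^{-2}$, which upon dividing by $\|w'\|_{q_1}^2$ is exactly the step-(a) estimate. I anticipate the main obstacle to be routing the precise constants from Lemma~\ref{lemmEllipsoidMissesLine}, the norm comparisons $c_{2q_1}$, $c_{2q_2}$, $c_s$, and the level-surface bounds so that they reassemble into exactly the published value of $\kappa_0$.
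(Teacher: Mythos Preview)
Your route differs from the paper's in structure, and there is a quantitative gap in your final assembly.

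\emph{Where the routes diverge.} The paper does not factor out $1/A$ and pass to $\inf_\lambda\|w-\lambda w'\|$. It first rescales $w,w'$ onto the unit $q_1$-ellipsoid by dividing by $\|\tilde u\|_{q_2}:=\|w\|_{q_1}$ and $\|\tilde u'\|_{q_2}:=\|w'\|_{q_1}$, expands
\[
\Bigl\|\frac{w}{\|\tilde u\|_{q_2}}-\frac{w'}{\|\tilde u'\|_{q_2}}\Bigr\|_{q_1}^2
=2-\frac{2N}{s\,\|\tilde u\|_{q_2}\|\tilde u'\|_{q_2}},\qquad N:=\sum_i\tilde a_i w_iw_i'\in\ZZ,
\]
uses integrality to bound this from below, and only \emph{afterwards} invokes Lemma~\ref{lemmEllipsoidMissesLine} to pass from the $q_1$-unit normalization to the sup-norm normalization $\|\langle w,u\rangle\|$. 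So Lemma~\ref{lemmEllipsoidMissesLine} is not the arithmetic input: it is the purely geometric statement that for $\hat w\not\sim\hat w'$ on a common ellipsoid $S_r$ one has $\|\hat w-\gamma\hat w'\|_q\ge c_q\|\hat w-\hat w'\|_q$ for all $\gamma\ge0$, and it does not assert anything like $\inf_\lambda\|w-\lambda w'\|_{q_1}\ge c_{q_1}/\|w'\|_{q_1}$. The arithmetic input you actually want is your Gram-determinant observation in the last paragraph, which is correct; just do not label it Lemma~\ref{lemmEllipsoidMissesLine}.

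\emph{The gap.} Your four ingredients assemble to a bound of order $1/K^2$, not $1/K$. From $\inf_\lambda\|w-\lambda w'\|_{q_1}\ge (s\,\|w'\|_{q_1})^{-1}$ together with $\|w'\|_{q_1}\le\tfrac{\sqrt5}{2}K$ and $A\le c_s(1+\tfrac{\sqrt5}{2})K$ you get
\[
\Bigl\|\frac{w}{A}-\frac{w'}{A'}\Bigr\|_2\ \ge\ \frac{1}{c_{2q_1}\,s\,A\,\|w'\|_{q_1}}\ \ge\ \frac{C}{K^2}
\]
for some $C=C(Q)$, and both upper bounds can be sharp simultaneously: with $q_1$ the standard form and $w=(n+1,n,0,\dots)$, $w'=(n,n-1,0,\dots)$ one has $\|w\|_{q_1},\|w'\|_{q_1}$ of order $n\approx K$ while the Gram determinant $q_1(w)q_1(w')-B_{q_1}(w,w')^2$ equals $1$, so the left side is genuinely of order $1/K^2$. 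The paper's own integrality step has the same issue: the claimed lower bound $2/(s\|\tilde u\|_{q_2}\|\tilde u'\|_{q_2})$ for the displayed norm squared would require $s\|\tilde u\|_{q_2}\|\tilde u'\|_{q_2}-N\ge1$, but $s\|\tilde u\|_{q_2}\|\tilde u'\|_{q_2}=\sqrt{(sq_1(w))(sq_1(w'))}$ is in general irrational, and nonvanishing plus integrality only yield $(sq_1(w))(sq_1(w'))-N^2\ge1$, i.e.\ again a $1/K^2$-type separation. For the downstream Schmidt-game argument any fixed polynomial separation suffices once the windows $P_i$ are adjusted, so Theorems~\ref{thmBALevelSurface} and~\ref{thmBALightCone} are not threatened; but you should not expect the constants to repackage into the stated value of $\kappa_0$.
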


\begin{lemm}\label{lemIntegerSepartionLC}Let $m =0$ and $K$ be a real number $ \geq 1$.  For any \[\langle w, u \rangle \not\sim_1 \langle w', u' \rangle \in \bigcup_{1\leq \|U\|_{q_2} \leq K} X_{\|U\|_{q_2}},\] we have \[\bigg\|\frac{w}{\|\langle w, u \rangle\|}-\frac{w'}{\|\langle w', u' \rangle\|}\bigg\|_2 \geq \frac {16}{K \kappa_0}.\]
\end{lemm}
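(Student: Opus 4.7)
The plan is to reduce to a $q_1$-estimate on the rescaled $w$-vectors, use the light-cone identity $m=0$ to control norms, and then combine integrality with Lemma~\ref{lemmEllipsoidMissesLine}.

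First set $r := \|u\|_{q_2}$ and $r' := \|u'\|_{q_2}$, both in $[1,K]$. Because $m=0$, the relation (\ref{eqnnormrelation}) gives $\|w\|_{q_1} = r$ and $\|w'\|_{q_1} = r'$. The norm equivalence forces $R := \|\langle w,u\rangle\| \in [2r/c_s,\,2c_sr]$ and likewise for $R'$, so a $q_1$-lower bound of order $1/K$ on $\|w/r - w'/r'\|_{q_1}$ translates, after paying $c_s$ (to change scale from $r$ to $R$) and $c_{2q_1}$ (to pass from $q_1$- to Euclidean norm), into the claimed bound on $\|w/R - w'/R'\|_2$. I would then dichotomize on whether $w$ and $w'$ are $q_1$-collinear: if so, the hypothesis $\not\sim_1$ rules out the positive-proportional case and forces the antiparallel configuration $w/r = -w'/r'$, from which a direct computation gives $\|w/R - w'/R'\|_{q_1} \geq r/R + r'/R' \geq 1/c_s$ -- a constant lower bound that crushes $16/(K\kappa_0)$.

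Otherwise $w \not\parallel w'$, and the integers $M := sq_1(w)$, $M' := sq_1(w')$, $N := s\langle w,w'\rangle_{q_1}$ satisfy $MM' - N^2 \geq 1$, since the scaled $q_1$-Gram determinant of two nonproportional integer vectors is a strictly positive integer. Combined with the polarization identity
$$q_1\bigl(\|u'\|_{q_2}w - \|u\|_{q_2}w'\bigr) \;=\; 2rr'\bigl(rr' - \langle w, w'\rangle_{q_1}\bigr),$$
this yields a baseline $q_1$-distance estimate from $w$ to $\mathrm{span}(w')$ of order $1/(sr')$. I would then apply Lemma~\ref{lemmEllipsoidMissesLine} (in the $q_1$-geometry) to the integer point $w$ and the line $\mathrm{span}(w')$, which contains the integer point $w'$ of $q_1$-norm $r' \leq K$; this should upgrade the estimate to $d_{q_1}(w, \mathrm{span}(w')) \geq c_{q_1}/K$. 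Since $w'/R'$ lies on $\mathrm{span}(w')$, this bounds the $q_1$-distance between the cube projections. Final bookkeeping converts to $\|\cdot\|_2$ via $c_{2q_1}^{-1}$ and assembles the constants $c_s$, $\sqrt{10s}$, $1+1/\sqrt{3}$, $\max(c_{2q_1},c_{2q_2})$, $\min(c_{q_1},c_{q_2})$ into $\kappa_0$ exactly as in its definition.

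\textbf{Main obstacle.} The integer Gram-determinant step alone gives a $q_1$-distance of order $1/(sr')$, which after dividing by $R \asymp r$ produces cube-projection separation only of order $1/(rr')$, i.e., $1/K^2$ when both $r, r'$ are close to $K$. Upgrading to the lemma's linear-in-$1/K$ behavior is precisely where Lemma~\ref{lemmEllipsoidMissesLine} enters: it captures the geometric fact that the line $\mathrm{span}(w')$ is ``pinned'' by an integer point of controlled $q_1$-norm, and this pinning is what forces the sharper separation. Making this step rigorous while tracking all constants faithfully is the delicate part of the argument.
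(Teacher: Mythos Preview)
Your dichotomy on $w\sim w'$ versus $w\not\sim w'$ is reasonable, and your treatment of the antiparallel case $w/r=-w'/r'$ is correct (and in fact covers a degenerate configuration that the paper's one-line reduction to Lemma~\ref{lemIntegerSepartionLW} glosses over). The gap is in the main case $w\not\sim w'$: your proposed use of Lemma~\ref{lemmEllipsoidMissesLine} is a misreading of what that lemma says. It is a purely geometric constant-factor statement --- for two non-proportional points on a common $q$-ellipsoid $S_\rho$, the distance from one to any nonnegative multiple of the other is at least $c_q$ times the distance between the two points themselves --- and contains no arithmetic input about integer points or ``pinning.'' It cannot upgrade a $1/K^2$ estimate to $1/K$. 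Your Gram-determinant route gives $d_{q_1}(w,\mathrm{span}(w'))\geq 1/(sr')$, hence $\|w/R-w'/R'\|_{q_1}\geq R^{-1}\cdot 1/(sr')\asymp 1/(srr')\gtrsim 1/K^2$, and no invocation of the ellipsoid lemma improves the exponent.

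The paper's argument is organized differently. It expands directly (this is your polarization identity divided through by $(rr')^2$):
\[
\Bigl\|\frac{w}{\|u\|_{q_2}}-\frac{w'}{\|u'\|_{q_2}}\Bigr\|_{q_1}^2 \;=\; 2 \;-\; \frac{2}{s\,\|u\|_{q_2}\|u'\|_{q_2}}\bigl(\tilde a_1 w_1w_1'+\cdots+\tilde a_k w_kw_k'\bigr),
\]
notes that the bracketed sum is an integer while the whole expression is a nonzero squared norm, and concludes that it is at least $2/(s\|u\|_{q_2}\|u'\|_{q_2})\geq 2/(sK^2)$; thus the $q_1$-separation of the unit-sphere points $w/r$ and $w'/r'$ is already of order $1/K$. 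Only then does Lemma~\ref{lemmEllipsoidMissesLine} enter, and its sole role is the rescaling from $r=\|u\|_{q_2}$ to $R=\|\langle w,u\rangle\|$: writing $w/R=(r/R)(w/r)$ and $w'/R'=(r'/R')(w'/r')$ with $r/R,\,r'/R'$ trapped between universal constants, one applies the lemma on $S_1$ with $\gamma=(r'/R')/(r/R)$ to convert the bound to one on $\|w/R-w'/R'\|_{q_1}$, losing only the constant factor $(r/R)\,c_{q_1}$. So in the paper the $1/K$ comes entirely from the integrality step on the displayed expansion, and Lemma~\ref{lemmEllipsoidMissesLine} handles only a change of normalization --- the opposite of the role your proposal assigns to it.
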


\begin{proof}[Proof of Lemma~\ref{lemIntegerSepartionLW}]  Let $\ell := d+1 -k$.  Since $2\sqrt{|m|} \leq \|u\|_{q_2}$, there exists a vector $\tilde{u} \in \RR^\ell$ such that \begin{eqnarray} \label{eqnuApproxBnds} \frac 3 4 \|u\|_{q_2}^2 \leq \|\tilde{u}\|_{q_2}^2\leq \frac 5 4  \|u\|_{q_2}^2 \end{eqnarray} and \[\|\tilde{u}\|_{q_2}^2 =  \|u\|_{q_2}^2  + m.\]  In the analogous way, there exists a vector $\tilde{u}' \in \RR^\ell$.  By (\ref{eqnnormrelation}), we have \begin{eqnarray} \label{eqnEqualNorms}\|w\|_{q_1} = \|\tilde{u}\|_{q_2} \quad \textrm{ and } \quad \|w'\|_{q_1} = \|\tilde{u}'\|_{q_2}\end{eqnarray} and also have \[\frac{q_1(w)}{\|\tilde{u}\|_{q_2}^2} = 1 \quad \textrm{ and } \quad \frac{q_1(w')}{\|\tilde{u}'\|_{q_2}^2} = 1.\]  Since every ray emanating from the origin determines a vector in $\RR^\ell$, every ray must intersect the boundary of the closed unit $\|\cdot\|_{q_1}$-ball in $\RR^\ell$ (this ball is clearly bounded since it can be put into a big enough sup norm ball).  By the scalar multiplicativity property of norms, the intersection point is unique.  Since $w \not\sim w'$, it follows that the two unit vectors $\frac{w}{\|\tilde{u}\|_{q_2}}$ and $\frac{w'}{\|\tilde{u}'\|_{q_2}}$ are distinct, and hence we have that \begin{align*}0 & \neq \bigg\|\frac{w}{\|\tilde{u}\|_{q_2}}-\frac{w'}{\|\tilde{u}'\|_{q_2}}\bigg\|_{q_1}^2  = q_1\bigg(\frac{w}{\|\tilde{u}\|_{q_2}}-\frac{w'}{\|\tilde{u}'\|_{q_2}}\bigg) \\ & = a_1 \frac{w_1^2}{\|\tilde{u}\|_{q_2}^2} - \frac{2 a_1 w_1 w'_1}{\|\tilde{u}\|_{q_2} \|\tilde{u}'\|_{q_2}} + a_1 \frac{{w'}_1^2}{\|\tilde{u}'\|_{q_2}^2} + \cdots + a_k \frac{w_k^2}{\|\tilde{u}\|_{q_2}^2} - \frac{2 a_k w_k w'_k}{\|\tilde{u}\|_{q_2} \|\tilde{u}'\|_{q_2}} + a_k \frac{{w'}_k^2}{\|\tilde{u}'\|_{q_2}^2}\\ & = \frac{q_1(w)}{\|\tilde{u}\|_{q_2}^2} + \frac{q_1(w')}{\|\tilde{u}'\|_{q_2}^2}  - \frac 2 {s \|\tilde{u}\|_{q_2} \|\tilde{u}'\|_{q_2}} (\tilde{a}_1 w_1 w'_1 + \cdots + \tilde{a}_k w_k w'_k)\end{align*} where the $w_i$s are the components of $w$ and the $w'_i$s are the components of $w'$.  Since the norm is not zero and since $w$ and $w'$ are integer vectors (i.e. integer lattice points), we have \[\bigg\|\frac{w}{\|\tilde{u}\|_{q_2}}-\frac{w'}{\|\tilde{u}'\|_{q_2}}\bigg\|_{q_1} \geq \sqrt{\frac 2 {s \|\tilde{u}\|_{q_2} \|\tilde{u}'\|_{q_2}}} \geq \sqrt{\frac 8 {5 s K^2 }}\] where the last inequality follows from (\ref{eqnuApproxBnds}).

Again by (\ref{eqnuApproxBnds}) and by (\ref{eqnEqualNorms}), we have real constants $c$ and $c'$ such that \begin{eqnarray}\label{eqnequivnorms2}\|\tilde{u}\|_{q_2} = c \|\langle w, u \rangle\| \quad \textrm{ and } \quad \|\tilde{u}'\|_{q_2} = c' \|\langle w', u' \rangle\|\end{eqnarray} where $\frac 1{c_s (1 + 2/\sqrt{3})} \leq c, c' \leq \frac {c_s} {1 + 2/\sqrt{5}} $.  

Let $c_{q_1}>0$ be the constant, which depends only on $q_1$, from Lemma~\ref{lemmEllipsoidMissesLine}; then, that lemma implies that \[\bigg\|\frac w{\|\langle w, u \rangle\|}-\frac {w'}{\|\langle w', u' \rangle\|}\bigg\|_{q_1}= \ \bigg\|\frac w{\|\langle w, u \rangle\|}-\g c \frac{w'}{\|\tilde{u}'\|_{q_2}}\bigg\|_{q_1}  \geq c c_{q_1} \sqrt{\frac 8 {5 s K^2 }} \geq \frac{16}{\kappa_0 K}\] where $\g = c'/c$.
\end{proof}

\begin{proof}[Proof of Lemma~\ref{lemIntegerSepartionLC}]  This proof is just a simplification of the proof of Lemma~\ref{lemIntegerSepartionLW}; note that (\ref{eqnuApproxBnds}) is superfluous and $\tilde{u} = u$, $\tilde{u}'=u'$.  The rest of the proof is identical.  \end{proof}

\begin{proof}[Proof of Proposition~\ref{propIntegerSepartionLW}]  By (\ref{eqnEqualNorms}), it follows that \[\|w\|_{q_1} \leq \sqrt{\|u\|_{q_2}^2 + |m|} =\bigg\|(\|u\|_{q_2},  \sqrt{|m|})\bigg\|_2 \leq \|u\|_{q_2} + \sqrt{|m|}\] using the triangle inequality; whence, $\|w\|_{q_1} \leq 2 K$.  Again by  (\ref{eqnEqualNorms}), we have that $\|\|w\|^2_{q_1} \geq m + 9|m| \geq 8|m|$.  Thus, we have $2 \sqrt{|m|} \leq \|w\|_{q_1} \leq 2 K$.  The same bounds hold for $w'$.

Since $\langle w, u \rangle \not\approx \langle w', u' \rangle$, either $w \not\sim w'$ or $u \not\sim u'$ (or both can hold).  If $w \not\sim w'$, then Lemma~\ref{lemIntegerSepartionLW} implies the desired result in this case. 

If $u \not\sim u'$, then the desired result is also a consequence of the lemma.  First, note that the vectors of $\RR^{d+1}$ that satisfy $Q = m$ are same as those that satisfy $-Q = -m$.  Therefore the coset $X^{\|w\|_{q_1}}$ remains the same subset of $\ZZ^{d+1}$; in the same way, the coset $X_{\|u\|_{q_2}}$ remains the same.  The only difference between $Q=m$ and $-Q=-m$ is that $q_2$ is the positive-definite part of $-Q$ and $q_1$ is the negative-definite part; therefore, the roles of $q_1$ and $q_2$ are reversed in Lemma~\ref{lemIntegerSepartionLW} and $m$ is replaced by $-m$.  The latter does not affect the lemma since the conclusion depends only on the absolute value of $m$.  The bounds, however, for $w$ and $w'$, as noted above, are different:  $K$ is replaced with $2K$.  Therefore, the conclusion of the lemma in this case is as follows:  \[\bigg\|\frac{u}{\|\langle w, u \rangle\|}-\frac{u'}{\|\langle w', u' \rangle\|}\bigg\|_2 \geq \frac {8}{K \kappa_0},\] which implies the desired result.
\end{proof}

\begin{proof}[Proof of Proposition~\ref{propIntegerSepartionLC}]  This proof is just a simplification of the proof of Proposition~\ref{propIntegerSepartionLW}.  Since $\|w\|_{q_1} = \|{u}\|_{q_2} \textrm{ and } \|w'\|_{q_1} = \|{u}'\|_{q_2}$, we have the same bounds on the $q_1$-components as on the $q_2$.  Then the applications (for $\langle w, u \rangle \not\sim_1 \langle w', u' \rangle$ and $\langle w, u \rangle \not\sim_2 \langle w', u'\rangle$, respectively) of Lemma~\ref{lemIntegerSepartionLC} in the stead of Lemma~\ref{lemIntegerSepartionLW} is even easier. 

The remaining case to consider is when both $\langle w, u \rangle \sim_1 \langle w', u' \rangle$ and $\langle w, u \rangle \sim_2 \langle w', u'\rangle$ hold; this implies that \[w = \frac{ \|{u}\|_{q_2}}{\|{u}'\|_{q_2}} w' \quad \textrm{ and } \quad u = \frac{\|w\|_{q_1}}{\|w'\|_{q_1} }u'.\]  And thus $\langle w, u \rangle \sim \langle w', u'\rangle.$

\end{proof}

\subsection{Proof of Theorem~\ref{thmBALevelSurface}}\label{secProofLevelSurfaceBA}

In this section, we prove the level-surface case; the light-cone case, which is a simplification of this proof, we prove in Section~\ref{secProofLightConeBA}.


We begin the proof by playing a strong $(\a,\b)$-game on $\partial X$ for $\a :=1/(8c_\pi^2)$ and some $0 < \b <1$, where $c_\pi \geq 1$ is the bilipschitz constant for the radial projection (the map $\pi$ defined in Section~\ref{subsubsecHaCoC}) of the $1/2$ thickening of a face of $C^d$ onto the affine hyperplane containing that face--by symmetry, the constant depends only on $1/2$ (and, of course, on $d$) but not on the face of $C^d$; we show that $\pi$ is bilipschitz in Section~\ref{subsubsecHaCoC}.  For balls in this game, we use only the restriction to the subspace $\partial X$ of the balls in $\RR^{d+1}$ that are centered at a point in $\partial X$ and with radius length given by $\| \cdot\|_2$.\footnote{Since $\partial X$ is an affine variety (hence closed) intersected with the unit cube in $\RR^{d+1}$ (also closed), it is a complete metric subspace of $\RR^{d+1}$, and thus we may play the game.  Moreover, $\partial X$ is a smooth $d-1$ manifold because it is a $d$-dimensional light-cone intersected with the unit $d$-cube and fixing a face of the cube means substituting $\pm 1$ into the corresponding variable in the light-cone, which yields a $d-1$-dimensional level-surface. Since, for any quadratic variety, all points different from the origin are nonsingular, $\partial X$ has no singular points and is thus a smooth manifold (possibly with boundary since a face of the cube is a manifold with boundary) or, possibly, the empty set since a face of the cube may miss the light-cone--but, of course, some face must meet the light-cone.}

Define a subset of $X_m(\ZZ)$ as follows:  \[P_0 := \bigcup_{ \|U\|_{q_2} < 3\sqrt{|m|}} X_{\|U\|_{q_2}} \bigg\backslash \{\langle 0, 0 \rangle\}.\]  By (\ref{eqnnormrelation}), we surmise that $P_0$ is contained in a large enough ball and thus a finite set.  Normalizing each point of $P_0$ by dividing by its sup norm yields a unique minimal positive distance $d_0$ (depending only on $Q$ and $m$ and with respect to $\|\cdot\|_2$) between these normalized points. 


Moreover, since $\partial X$ is a compact, isometrically embedded Riemannian submanifold (under inclusion) of $\RR^{d+1}$ with Riemannian metric induced by the usual dot product on $\RR^{d+1}$, it has a finite number of path components, each with some diameter (with respect to $\|\cdot\|_2$);\footnote{The notions of normality, orthogonality, and angle in this proof are all with respect to this dot product.}  and, therefore, $\partial X$ must meet the boundary of any closed $\|\cdot\|_2$-ball around any point of $\partial X$ with diameter less than the least diameter--denote this $d_1$--of the path components.  Note that $d_1 >0$ because $d\geq2$.  Since there are only a finite number of path components (and these are closed sets of $\RR^{d+1}$), there exists a unique minimal positive distance $d_2$ (with respect to $\|\cdot\|_2$) between any two components.

To play the strong game, Player $A$ is allowed to pick balls with radii greater than or equal to $\a$ times the radius of Player $B$'s most recent choice of ball.  For this proof, we agree that Player $A$ always chooses a ball with radius equal to $\a$ times the radius of Player $B$'s most recent choice of ball.  Therefore, after iterating the game a finite number of times, we can force Player $B$'s balls to have arbitrarily small radii.  Fix a very small $\varepsilon >0$ and  let $R > 0$ be as in Lemma~\ref{lemmClosetoLinear}.\footnote{The smaller the $\varepsilon$, the larger the constant $\a$ that we could have started with--however, the current proof does not allow the maximal value of $1/2$ for $\a$.  To obtain this maximal value of $\a$, one should be able to use Schmidt's original technique in~\cite{Sch2}.  See the Conclusion for a more detailed remark.}  Iterate the game so that Player $B$'s balls have radii strictly smaller than $R_0:=\frac 1 3 \min((6 \kappa_0 \sqrt{|m|})^{-1}, d_0/2, d_1/2, d_2/2,1/2, c_\pi /\sqrt{d}, R)$. 

Player $B$ begins by picking a closed ball $B_1$ with $c(B_1) \in \partial X$ and $\rho(B_1)<R_0$.  Now $B_1$ could meet more than one face of $C^d$.  It is, however, more convenient to play the game on a ``piece'' of $d$-dimensional (affine) hyperplane and then project onto the cube.  To do this, pick a face $\mF$ that contains $c(B_1)$;\footnote{If there is a choice of face, pick any one of them.} this face determines a $d$-dimensional (affine) hyperplane $\mE$.  Thicken this hyperplane by $1/2$; intersect the thickening with $C^d$; and denote this intersection by $\mF'$.  Note that, for any closed ball $B'$ centered in $\mF$ with radius at most $\rho(B_1)$, one has $B' \cap \mF' = B' \cap C^d$.

\subsubsection{Handling a corner of $C^d$}\label{subsubsecHaCoC}

Now, for every $x \in \mF'$, there exists a unique ray emanating from the origin $\boldsymbol {0}$ (of $\RR^{d+1}$) that intersects $\mE$ in a unique point, which we denote $\pi(x)$.  Whence we have the radial projection $\pi: \mF' \rightarrow \mE$, which is the identity on $\mF$ and, in general, a bilipschitz homeomorphism onto its image.\footnote{Distance in both the domain and range are inherited from $\RR^{d+1}$.}  To see the later property, first note that $\pi$, by definition, is bijective onto its image.  We show that $\pi$ and its inverse are Lipschitz; consider $\pi^{-1}(v) \not\sim \pi^{-1}(w) \in \mF'$.  These points also lie on the unit cube.  Now $1\leq \|\pi(\mF')\|$ is bounded from above by some positive number $M$ because $1/2$ is small enough and because, for any $x \in \mF' \backslash \mF$, one can consider the projection in the $2$-plane determined by $x$ (thought of as a vector in $\RR^{d+1}$) and the normal vector of $\mE$.\footnote{This $2$-plane intersects $\mE$ in a line, which must be normal to the normal vector; thus we obtain a right triangle in this $2$-plane.  Since $1/2$ is considerably smaller than $1$, the angle between $x$ and the unit normal vector of $\mE$ (whose initial point is $\boldsymbol{0}$ and terminal point lies on $\mE$) is bounded away from being orthogonal, and thus the length of the hypothenuse is bounded.} Therefore, there are numbers $1 \leq c_v, c_w \leq M$ such that $\pi(\pi^{-1}(v)) = c_v \pi^{-1}(v)$ and $\pi(\pi^{-1}(w)) = c_w \pi^{-1}(w)$.  By  Lemma~\ref{lemmCubeMissesLine}, we have  \[\|\pi^{-1}(v) -  \frac {c_w}{c_v} \pi^{-1}(w) \| > c' \|\pi^{-1}(v)-\pi^{-1}(w) \|\] where $c'$ is a constant.  This shows that $\pi^{-1}$ is Lipschitz.  

To show that $\pi$ is Lipschitz, we rename the variables so that $\mE$ has equation $Y_1 = 1$.  Consider the following sup-like norm on $\RR^{d+1}$:  $\|v\|_s := \max(|v_1|, \frac 1 {M'} |v_2|, \cdots, \frac 1 {M'} |v_{d+1}|)$ where the $v_i$s are the components of $v$ and $M'$ is a positive constant larger than $\sup \|\pi(\mF')\|_2$.  Let $S:=\{v \in \RR^{d+1} \mid \|v\|_s = 1\}$, the $d$-dimensional shell of a $d+1$-dimensional box in $\RR^{d+1}$.  Hence, the face of $S$ normal to (and containing the terminal point of) the standard basis vector $e_1:=(1, 0, \cdots, 0)$ contains $\pi(\mF')$.  Thus, given $x \not\sim y \in \mF'$, we have that $\pi(x), \pi(y) \in S$.  Using Lemma~\ref{lemmCubeMissesLine} with $\|\cdot\|_s$  in the analogous way as for the $\pi^{-1}$ case shows that $\pi$ is bilipschitz.  Now, by symmetry, $\pi$ is well-defined and bilipschitz for any face of $C^d$.  Moreover, the bilipschitz constant $c_\pi\geq1$ (with respect to distance given by $\|\cdot\|_2$) depends on our choice of $1/2$ and, in particular, is independent of the face of $C^d$, the light-cone, and the Schmidt game; thus it is a universal constant.

Now we must show that $\pi(\mF')$ contains a big enough region of $\mE$:  precisely, we show that $\pi(\mF')$ contains the set $\tilde{\mF}:=\{v \in \mE \mid \|v\|\leq 1 + 1/\sqrt{d}\}$ where we continue to assume that $e_1$ is the outward normal of $\mE$ and the origin of $\mE$ is the terminal point of $e_1$.\footnote{Here $\|\cdot\|$ denotes the sup norm of $\mE$, not the sup norm of $\RR^{d+1}$.}  Let $x \in \partial \mF$.  If $y \in \mF$, then the segment between $x$ and $y$ is in $\pi(\mF')$ because $\mF$ is convex.  If $y$ belongs to a face $\mF''$ (of $C^d$) adjacent to $\mF$ containing $x$, then these points $x$, $y$, and $\boldsymbol {0}$ determine a $2$-plane $\mP$ in $\RR^{d+1}$.  Now $\mP$ intersects $\mE$ and $\mF''$ in lines.  Consider $z: = t\pi(x) + (1-t)\pi(y)$ for some $t \in [0,1]$.  Since it lies on the intersection line with $\mE$, it must lie in $\mP$.  Therefore, some multiple of it $z''$ lies on the intersection line in $\mF''$.  Since $x$ and $y$ lie on the same face of $C^d$, they cannot lie on the same lie through $\boldsymbol {0}$.  Consequently, the angle in $\mP$ between $x$ and $y$ is strictly smaller than a straight angle and, moreover, it is bisected by $z''$.  Thus the intersection of $z$ (thought of as a vector in $\RR^{d+1}$) with $\mF''$ must lie between $x$ and $y$.

Let us continue to assume that $e_1$ is the outward normal vector of $\mE$.  Now pick a vector $v$ normal to $e_1$.  Then the ray $R_v$ determined by $v$ and starting at the terminal point of $e_1$ lies in $\mF$ and intersects $\partial \mF$ at a point $p$.  Let $\mP$ be the $2$-plane determined by $e_1$ and $v$.  Now the segment starting at $p$ and ending at (the terminal point of) $p-\frac 1 2 e_1$ is in $\mF'$. Therefore, similar triangles in $\mP$ implies $\pi(p-\frac 1 2 e_1)$ lies on $R_v$ outside of $\mF$ with a distance (with respect to $\|\cdot\|_2$) of at least $1$ from $p$.  Since $p$ lies on $\partial \mF$, it must lie on, at least, another face of $C^d$--thus, some other coordinate besides the first must have value $1$ or $-1$.  Looking at the coordinates of $p$ (which has value $1$ in the first coordinate), we note that adding $-\frac 1 2 e_1$ is in this adjacent face.  Thus, by the proceeding paragraph, every point on the segment from $\pi(p-\frac 1 2 e_1)$ to (the terminal point of) $e_1$ lies in $\pi(\mF')$.  See Figure~\ref{figproj1}.

\begin{figure}[h] 
   \centering
   \includegraphics[width=1.0\textwidth]{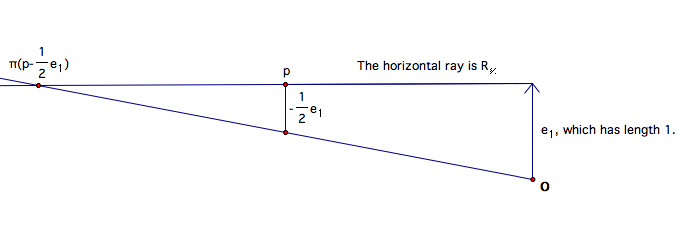} 
    \caption{On a $2$-plane in $\RR^{d+1}$ containing the normal vector to face $\mF$.}
   \label{figproj1}
\end{figure}

Now, in this paragraph, we restrict consideration solely to $\mE$.  Let $x \in \tilde{\mF} \backslash \mF$.  Then $x$ determines a unique ray from the origin $\boldsymbol {0}_E$ of $\mE$ (i.e. the terminal point of $e_1$), which intersects a $d-1$-dimensional face $\mS$ of the boundary of $\mF$ in a point $p$.  The largest distance (with respect to $\|\cdot\|_2$) that $p$ can be from $\boldsymbol {0}_E$ is $\sqrt{d}$.  Let $N_p$ denote the normal line (in $\mE$) to $\mS$.  The vector $p$ and the line $N_p$ determine a $2$-plane $\mP$ in $\mE$.  Similar triangles in $\mP$ implies that if we thicken $\mS$ by any length less than $1/\sqrt{d}$, we do not meet $\pi(p-\frac 1 2 e_1)$.  If $p$ lies on more than one face of $\mF$, then the same calculation can be made.  Therefore, $x \in \pi(\mF')$.  See Figure~\ref{figproj2}.

\begin{figure}[h] 
   \centering
   \includegraphics[width=1.0\textwidth]{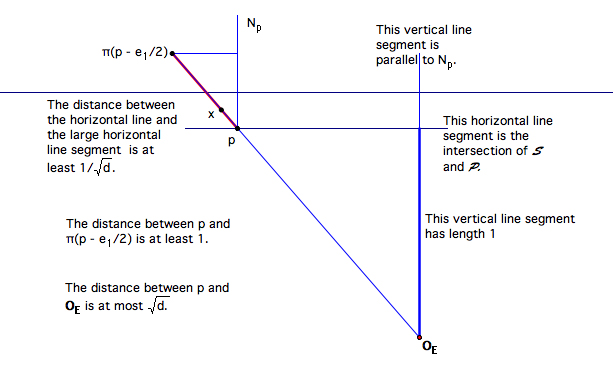} 
    \caption{On a $2$-plane in $\mE$.}
   \label{figproj2}
\end{figure}

Finally, since the light-cone consists of lines through the origin $\boldsymbol {0}$ of $\RR^{d+1}$, the restriction of $\pi$ to the light-cone is a well-defined bilipschitz homeomorphism.  

Now pick any ball $B \subset \RR^{d+1}$ centered at some point $ p \in \mF$ with radius $r \leq 1/2$.  If $B$ meets some other face $\mF''$ of $C^d$, then let $q$ denote any point in the intersection.  The shortest distance between $q$ and $\mF$ is given by the distance along (the direction of) the normal line to $\mF$; since $p$ lies in $\mF$, this normal distance is $\leq r$; thus $B \cap C^d = B \cap \mF'$.  Now $\pi$ preserves $p$ and, therefore, there exists a unique ($\|\cdot\|_2$-ball) $B^\pi \subset \mE$ with the same center $p$ and radius $c^{-1}_\pi r$ contained in $\pi(B \cap \mF')$.  Forcing $r \leq \min(1/2, c_\pi/ \sqrt{d})$ implies that $B^\pi \subset \tilde{\mF}$.  Note that we have required this for Player $B$'s choice of $B_1$ (even more, the ball of $3$ times the radius meets this requirement too).  Also, note that the game is local in the following sense:  once $B_1$ is chosen and $\mF$ is chosen, then $\mF'$ is fixed because any later balls lie in $B_1$.  Therefore, for balls of $\RR^{d+1}$ inside of $B_1$, we can loosen the definition of $B^\pi$ to the ($\|\cdot\|_2$-ball) contained in $\mE$ with center $\pi(p)$ and radius $c^{-1}_\pi r$ contained in $\pi(B \cap \mF')$ and, thereby, allowing the center of $B$ to lie near, but not necessarily on, $\mF$. 

Now consider the inverse:  pick any ball $B \subset \tilde{\mF}$ centered at some point $\tilde{p} \in \tilde{\mF}$ with radius $r$. Now there exists a unique ($\|\cdot\|_2$-ball) $B^{-\pi} \subset \RR^{d+1}$ with center $\pi^{-1}(\tilde{p}) \in \mF'$ and radius $c^{-1}_\pi r$ such that $B^{-\pi} \cap \mF'$ is contained in $\pi^{-1}(B)$.  

Fix this face $\mF$, which contains $c(B_1)$; let $\tilde{\mF}$ be as above.  Now $\tilde{\mF}$ intersect the light-cone (denote it by $\tilde{\partial} X$) is a hypersurface of the $d$-dimensional Euclidean space $\mE$ contained in $\tilde{\mF}$ (the terminal point of $e_1$ is thought of as the origin of $\mE$).  Therefore, Lemma~\ref{lemmClosetoLinear} applies and all radii are smaller than the $R > 0$ from this lemma as stated in the beginning. 

\subsubsection{Playing the game}

Player $B$ has already chosen $B_1$.  Form $B_1^\pi \subset \tilde{\mF}$.  Note that $\rho(B_1)$ is so small that $B_1$ (or even the ball with $3$ times the radius) meets only one path-component of the variety and at most one normalized point  of $P_0$, a point that we denote by $q_0$.  (Recall that we normalize by dividing by the sup norm, so $q_0 \in C^d$.)  Let $q:=\pi(q_0)$.

\paragraph{\it{Missing a line}}\label{secMissLine}


Let us assume that $d \geq 3$.\footnote{For $d=2$, as we shall see below, missing the line that we need to miss is equivalent to missing a point, and the latter only requires a simplified form of the proof in this section.}  We now restrict to $\mE$ as our ambient space.  Since, in general, we need to miss not just a point but a line, we show how to miss a line $L$ containing $q$ and lying in $\mE$.  Let $p:=c(B_1^\pi)$, and let $L_p$ denote the line in $\mE$ parallel to $L$ and meeting $p$.  Let $\mN_p$ be the normal line of $\tilde{\partial} X$ at $p$ in $\mE$.  Now $T_p (\tilde{\partial} X) \cap L_p$ is either $p$ or $L_p$.  If this intersection is $L_p$, then let $L'_p := L_p$.  Otherwise, if this intersection is just $p$, then project, along the $\mN_p$ direction, the line $L_p$ onto $T_p (\tilde{\partial} X)$;\footnote{The line $L_p$ has equation (in $\mE$) $p + t u$ where $t \in \RR$ and $u$ is its direction vector.  Projection is the relevant addition of some multiple $s \in \RR$ of the direction vector $n$ of $\mN_p$ to $u$ so that the resulting line $p + t(u + s n)$ lies in $T_p (\tilde{\partial} X)$.} denote the projected line by $L'_p$.  Note that $L'_p$ contains $p$.    Let us first assume that $L'_p$ is not just the point $p$, but a proper line in the tangent space--this implies that the direction vector of $L$ and $L_p$ is not along the $\mN_p$ direction.  Whence, by Gram-Schmidt, there is an element $v$ of $T_p (\tilde{\partial} X)$ normal to $L'_p$.  
 Then $v$ and $\mN_p$ determine a $2$-plane $\mP_\mN$.  Also, let $\mP'_p$ denote the $2$-plane determined by $L_p$ and $\mN_p$; note that $\mP'_p$ contains $L'_p$.  And let $\mP'_q$ denote the $2$-plane spanned by $L$ and the line parallel to $\mN_p$ meeting $q$; note that $\mP'_p$ and $\mP'_q$ are translated (from $p$ to $q$) $2$-planes and thus parallel or identical.  Consequently, $v$ is normal to these planes.

If $L$ and $L_p$ do not coincide, then they form a $2$-plane $\mP_L$.\footnote{This plane is determined by $L_p$ and the segment between $p$ and $q$. In particular, if we regard, for the moment, $p$ as the origin of $\mE$ and that the line $L_p$ has direction vector $u$, then $L$ is explicitly described as $q + tu$ for $t \in \RR$; therefore, the vectors $q$ and $u$ span $\mP_L$.}  Now since $\mP_\mN$ and $\mP_L$ contain $p$, they determine (at most) a $4$-space $\mS$.  Since $\mS$ contains $p$ and $q$ and the direction vectors of $L$ and $\mN_p$, both $\mP'_p$ and $\mP'_q$ are contained in $\mS$, and, since $\mP'_p$ and $\mP'_q$ are still parallel (or identical) in $\mS$, there is some vector $w$ with initial point $p$ and terminal point on $\mP'_q$ of least distance and lying in $\mS$.    If $w$ is not the zero vector, then either $v$ or $-v$ has angle greater than or equal to orthogonal with respect to $w$; without loss of generality, we may assume that $v$ does.  By Lemmas~\ref{lemmClosetoLinear} and~\ref{lemm2PlaneIntH}, the point $p'$ of the variety on the boundary of $B_1^\pi$ inside $\mP_\mN$ in the direction $v$ is very close to $T_p(\tilde{\partial} X)$.\footnote{The plane $\mP_\mN$ meets the tangent space of the variety in a line, which must be determined by $v$. Now the two distinct points from Lemma~\ref{lemm2PlaneIntH} must correspond to going in directions $v$ and $-v$, respectively.}  Therefore, it is far away from the part of $T_p(\tilde{\partial} X)$ that is closest to $L'_p$ and even farther away (the distance is with respect to $\|\cdot\|_2$) from $L$.  Moreover, since $p'$ is far away from $L$ in $\mS$, it is far away from $L$ in $\mE$, as $L \subset \mS \subset \mE$.  

If $w$ is the zero vector (equivalently, $\mP'_p$ and $\mP'_q$ are identical), then it does not matter whether $v$ or $-v$ is chosen (as both move orthogonally away from $L$, $L_p$, and $L'_p$) and, in the same way as in the previous paragraph, $p'$ is far away from the part of $T_p(\tilde{\partial} X)$ that is closest to $L$.   If $L$ and $L_p$ coincide, then $\mP'_p$ and $\mP'_q$ are identical, and the previous sentence applies.

For the other case, when $L'_p$ is just the point $p$, we can pick any $v \in T_p (\tilde{\partial} X)$ (all such vectors are normal to $L'_p$) and repeat the last two paragraphs with $L_p$ replacing $\mP'_p$ and $L$ replacing $\mP'_q$ (note that the dimension of $\mS$ is now at most $3$).

Pick a point of the variety in $\mP_\mN$ near $p'$ as the center of Player $A$'s ball, which we denote by $\tilde{A}_1$, such that $\tilde{A}_1 \subset B_1^\pi$ and has radius $c_\pi \a \rho(B_1)$.  Since $\a$ is small enough (if we have chosen $\varepsilon$ very small, then $c^2_\pi \a$ need only be slightly smaller than $1/2$), $\tilde{A}_1$ will not meet $L$.\footnote{A more explicit computation is in (\ref{eqnfittingcalc}).}

Let $A_1 := \tilde{A}_1^{-\pi}$.  Then $A_1$ does not meet $\pi^{-1}{L}$ and, in particular, $q_0$. Note that $c(A_1) \in \partial X$ and $\rho(A_1) = \a \rho(B_1)$.

Finally, Player $B$ will choose another ball inside of $A_1$, which, by reindexing, we may assume is the first ball $B_1$.\footnote{This new $B_1$ may not be centered in $\mF$.  In this case, we can either rename $\mF$ and its related objects or just ignore the distinction as it does not matter for the rest of the proof.}  Therefore, without loss of generality, we may assume that $B_1$ does not meet any normalized point from $P_0$.

\paragraph{\it{Essence of the proof}}\label{secEssenseProofLW}
Player $B$ has chosen $B_1$.  We play for Player $A$ using induction on the iterations of the game, iterations that are denoted by $i$.

Define the constant $c' := \min(\frac {(\a\b)^2}{4\kappa_0}, \frac {(\a\b)^2}{4}, \frac {3 \a\b\rho(B_1)\sqrt{|m|}}{4 \kappa_0}, \frac {3 \a\b\rho(B_1)\sqrt{|m|}}{4})$.  For $i \in \NN$, let $M_i := \lceil \rho(B_i)^{-1}\rceil$.  We window elements of $X_m(\ZZ)$ as follows:  \[P_1 := \bigcup_{3\sqrt{|m|} \leq \|U\|_{q_2} \leq \frac{M_1}{\kappa_0}} X_{\|U\|_{q_2}}\] and, for natural numbers $i > 1$, \[P_i := \bigcup_{\frac{M_{i-1}}{\kappa_0} \leq \|U\|_{q_2} \leq \frac{M_i}{\kappa_0}} X_{\|U\|_{q_2}}.\]  We delay considering approximation by elements of the finite set $P_0$ until the end.

The constant $c'$ is scaled correctly:

\begin{lemm}\label{lemmcprimecorrect}  
\begin{align*}
\textrm{We have} \quad \frac{c'}{3 \sqrt{|m|}} \leq \frac 1 4 (\a\b)\rho(B_1) & \quad \textrm{when }  \quad i = 1, \\ 
\textrm{and} \quad \frac{c' \kappa_0}{M_{i-1}} \leq \frac 1 4 (\a\b)\rho(B_i) &  \quad \textrm{when }  \quad i > 1.
\end{align*}
\end{lemm}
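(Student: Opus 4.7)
\bigskip

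\noindent\textbf{Proof proposal.} The lemma is a pure bookkeeping check on the constant $c'$, so the plan is to simply match each of the two desired inequalities to one of the four terms participating in the minimum that defines $c'$. The key bookkeeping identity is the Schmidt game recursion $\rho(B_i)=\beta\rho(A_{i-1})=\alpha\beta\rho(B_{i-1})$, together with the elementary bound $M_{i-1}^{-1}=\lceil\rho(B_{i-1})^{-1}\rceil^{-1}\leq\rho(B_{i-1})$.

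For the case $i=1$, the plan is to use the fourth term in the definition of $c'$, namely $c'\leq \tfrac{3\alpha\beta\rho(B_1)\sqrt{|m|}}{4}$. Dividing by $3\sqrt{|m|}$ gives
\[
\frac{c'}{3\sqrt{|m|}}\;\leq\;\frac{\alpha\beta\rho(B_1)}{4},
\]
which is exactly the claim. (The third term, with the extra $\kappa_0^{-1}$, is the one that will be invoked elsewhere in the proof when the separation proposition is applied; it is included in the minimum to keep the same scale.)

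For the case $i>1$, the plan is to use the first term, $c'\leq\tfrac{(\alpha\beta)^2}{4\kappa_0}$. Then
\[
\frac{c'\kappa_0}{M_{i-1}}\;\leq\;\frac{(\alpha\beta)^2}{4}\cdot\frac{1}{M_{i-1}}\;\leq\;\frac{(\alpha\beta)^2}{4}\,\rho(B_{i-1})\;=\;\frac{(\alpha\beta)^2}{4}\cdot\frac{\rho(B_i)}{\alpha\beta}\;=\;\frac{\alpha\beta\,\rho(B_i)}{4},
\]
where in the last equality we used the game recursion $\rho(B_i)=\alpha\beta\rho(B_{i-1})$. This again matches the claim exactly.

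There is no real obstacle: the lemma is essentially a declaration that the four-term minimum defining $c'$ was chosen precisely to make these two inequalities automatic. The only thing worth flagging is that the two other terms in the minimum ($\tfrac{(\alpha\beta)^2}{4}$ and $\tfrac{3\alpha\beta\rho(B_1)\sqrt{|m|}}{4\kappa_0}$) are not used here; they will be needed at a later step where $c'$ must also control the separation estimate coming from Proposition~\ref{propIntegerSepartionLW} (which produces a factor $\kappa_0$), so including them in the minimum now saves having to redefine $c'$ later.
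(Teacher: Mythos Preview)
Your proposal is correct and matches the paper's proof essentially verbatim: the paper dispatches $i=1$ with ``follows by the definition of $c'$'' and for $i>1$ writes exactly the chain $\frac{c'\kappa_0}{M_{i-1}}\leq\frac14(\alpha\beta)^2\rho(B_{i-1})\leq\frac14(\alpha\beta)\rho(B_i)$. One tiny caveat: since the game in play is the \emph{strong} game and only Player~$A$ has agreed to take exact radii, the recursion is $\rho(B_i)\geq\alpha\beta\rho(B_{i-1})$ rather than equality; the inequality points the right way, so your display should read $\leq$ in place of the two ``$=$'' signs, but the argument is otherwise unchanged.
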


\begin{proof}
The $i=1$ case follows by the definition of $c'$.  For $i >1$, note that \[\frac{c' \kappa_0}{M_{i-1}} \leq \frac 1 4 (\a\b)^2\rho(B_{i-1})\leq  \frac 1 4 (\a\b)\rho(B_{i}).\]
\end{proof}

\subparagraph*{\it{Initial step $i=1$}} 

We first consider $d \geq 3$.  Let $BB_1$ denote the ball of $\RR^{d+1}$ containing $B_1$ with the same center, but twice the radius; and $BBB_1$ the same, but with triple the radius.\footnote{We use the analogous notation to mean the same for other balls.}  If no normalized (which, recall means that we divide by its sup norm) point of $P_1$ is in $BB_1$, then Player $A$ may freely choose any allowed ball--for definiteness, let $c(A_1) = c(B_1)$ and $\rho(A_1) = \a \rho(B_1)$.  Note that, by Lemma~\ref{lemmcprimecorrect}, $AAA_1$ misses the ball of radius $c'/6 \sqrt{|m|}$ around any normalized point of $P_1$.    

If at least one normalized point of $P_1$ is in $BB_1$, then we proceed as follows.  By Proposition~\ref{propIntegerSepartionLW}, if $BB_1$, which has small enough diameter, meets any two distinct normalized points $\frac{\langle w, u \rangle}{\|\langle w, u \rangle\|},\frac{\langle w', u' \rangle}{\|\langle w', u' \rangle\|}$ of $P_1$, then $\langle w, u \rangle \approx \langle w', u' \rangle$.  Thus $\langle w, u \rangle$ and $\langle w', u' \rangle$ lie on the same $2$-plane $\mP$ through the origin $\boldsymbol{0}$ of $\RR^{d+1}$.\footnote{If there is exactly one normalized point of $P_1$ in $BB_1$, we can, at random, pick such a $2$-plane $\mP$ and continue to follow this proof, or note that it is easier to miss just a point and use the analogous proof for the light-cone case, namely Sections~\ref{secMissPoint} and~\ref{secEssProofLC}}  Consequently, any normalized point of $P_1$ in $BB_1$ must thus lie on $\mP$.  Since $\mP$ contains $\boldsymbol{0}$, it cannot coincide with the affine hyperplane $\mE$.  Now since $BB_1$ contains a normalized point of $P_1$ and its radius is small enough, the normalized point projects (under $\pi$) onto $\mE$.  Therefore, $\mE \cap \mP$ is a line $L$.  Player $A$ must miss $L$ and can do so using the technique in Section~\ref{secMissLine}.

And even more, Player $A$ must miss a neighborhood of $L$, namely the set \[L' := \bigcup_{q \in L} B\big(q, \frac{c_\pi c'}{6 \sqrt{|m|}}\big),\] where the union is over $\|\cdot\|_2$-balls in $\mE$.  Let us use the notation from Section~\ref{secMissLine}:  $L$ and $L_p$, where $p$ is the center of $B_1$.  Since, in that section, we moved away in a normal direction to $L$ and $L_p$ and are far away from $L_p$ and even farther away from $L$, the restriction on Player $A$'s choice of ball is given by fitting it between the balls $B(p, \frac{c'}{6 \sqrt{|m|}})$ and $B_1$ in $\RR^{d+1}$.  By Lemma~\ref{lemmcprimecorrect}, we have (after projecting onto $\mE$)\footnote{Since we are on the variety, we must use Lemma~\ref{lemmClosetoLinear}.  And thus the $\varepsilon$ factor appears, but is chosen very small so that it does not affect this restriction--note that, for this proof, we need only $6 \a c_\pi \rho(B_1)$ amount of room, but we have much more.}  \begin{eqnarray}\label{eqnfittingcalc}c_\pi^{-1}\rho(B_1) -\frac{c_\pi c'}{6 \sqrt{|m|}} \geq c^{-1}_\pi \rho(B_1) - \frac{c_\pi \a}{8} \rho(B_1) > 7 \a c_\pi \rho(B_1).\end{eqnarray}  Recall the definition of $\mP_\mN$ from Section~\ref{secMissLine}.  In $\mP_\mN$, there is an arcsegment contained in $\tilde{\partial} X$ connecting $p$ with the correct point $p'$ of the boundary of $B^\pi_1$.  This arcsegment is a smooth curve and thus continuous.  In particular, there is some point on $\tilde{\partial} X$ at all distances from $p$ to $p'$ where distance is with respect to $\|\cdot\|_2$ in $\mE$.  Consequently, there is a choice for $\tilde{A}_1$ such that $\widetilde{AAA}_1$ does not meet $L'$.  Let $A_1 := \tilde{A}_1^{-\pi}$.\footnote{Since $\widetilde{AAA}_1 \subset B_1^\pi$, we have that $AAA_1 \subset B_1$.}  Then $AAA_1$ does not meet $\pi^{-1}{L'}$, which is a big enough set to contain the intersection of $C^d$ and the balls of radius $\frac{c'}{6 \sqrt{|m|}}$ around all points of $\pi^{-1}(L)$. (Note that $\rho(A_1) = \a \rho(B_1)$, as in Section~\ref{secMissLine}.)

In particular, regardless of whether a normalized point of $P_1$ is in $BB_1$ or not, we have shown that all points of $AAA_1 \cap C^d$ are outside of \[\bigcup_{\langle w, u \rangle \in P_1} B\big(\frac{\langle w, u \rangle}{\|\langle w, u \rangle\|}, \frac{c'}{2\|u\|_{q_2}} \big) \cap C^d.\] By (\ref{eqnequivnorms2}) and (\ref{eqnuApproxBnds}), there exists some constant $c_{2s}$ depending only on $c_s$ such that all points of $AAA_1 \cap C^d$ are outside of \[\bigcup_{\langle w, u \rangle \in P_1} B\big(\frac{\langle w, u \rangle}{\|\langle w, u \rangle\|}, \frac{c'c_{2s}}{2\|\langle w, u\rangle\|} \big) \cap C^d.\]

We are still considering the initial step $i=1$, but now we consider the case $d=2$; we need only adapt the $d\geq3$ proof.  Using $x, y,$ and $z$ as the variables in $\RR^3$ and renaming them if necessary, we may, without loss of generality, assume that the light-cone has equation $ax^2 + by^2 = cz^2$ for positive, rational coefficients $a, b,$ and $c$.  

If no normalized point of $P_1$ is in $BB_1$, then proceed as in the $d\geq3$ case above.  If at least one normalized point of $P_1$ is in $BB_1$, then, proceeding as in the $d\geq3$ case, we obtain the $2$-plane $\mP$.  Let us follow the usual convention and call the $z = \pm1$ faces of $C^2$ the horizontal faces and the other faces the vertical faces; let $e_1$, $e_2$, and $e_3$ be the unit $x$, $y$, and $z$-vectors, respectively.  Note that $\mP$ must contain the $z$-axis.  Therefore, the intersection of $\mP$ with a horizontal face of $C^2$ is a line $L$ containing the terminal point of either $e_3$ or $-e_3$.  By plugging in $\pm 1$ into the $z$-variable, we see that the intersection of the light-cone with a horizontal face is an ellipse $E$ around (but not containing) the terminal point of either $e_3$ or $-e_3$.\footnote{Since we are using the projection $\pi$ and Lemma~\ref{lemmClosetoLinear}, it does not matter if these actual faces of $C^2$ contain all of their ellipses--a slightly enlarged face (obtained via $\pi$) will contain enough of the ellipse for this proof.}  By Lemma~\ref{lemmClosetoLinear} and the fact that $\varepsilon$ is very small, $L \cap E$ can meet $BB_1$ in at most one point.  Missing a point is easier than missing a line--in particular there are only two directions ($v$ or $-v$) in the tangent line to go, so there is no need to appeal to Gram-Schmidt when using Section~\ref{secMissLine}.\footnote{For more details, see the simplification of Section~\ref{secMissLine} in Section~\ref{secMissPoint}.}

By plugging in $\pm 1$ into the other variables (one at a time), we see that the intersection of the light-cone with the vertical faces are hyperbolas with such orientation that any line parallel to the $z$-axis (and in the vertical face) meets a connected component of the hyperbola in exactly one point.\footnote{Again, it does not matter if these actual faces of $C^2$ contain all of their hyperbolas.}  Note that the intersection of $\mP$ and any vertical face is a line parallel to the $z$-axis.  Recall that we have required $BB_1$ to meet only one connected component.  Therefore, we need only miss this intersection point, as in the previous paragraph.

Since the rest of the proof is analogous (the same relation (\ref{eqnfittingcalc}) holds and $\mP_\mN$ is just the affine $2$-plane containing the relevant face of $C^2$) to the $d\geq3$ case, Player $A$ can pick $A_1$ such that all points of $AAA_1 \cap C^d$ are outside of \[\bigcup_{\langle w, u \rangle \in P_1} B\big(\frac{\langle w, u \rangle}{\|\langle w, u \rangle\|}, \frac{c'c_{2s}}{2\|\langle w, u\rangle\|} \big) \cap C^d.\]

\subparagraph*{\it{Induction step.}}

Assume that all points of $AAA_{i-1} \cap C^d$ are outside of \[\bigcup_{\langle w, u \rangle \in \cup_{j=1}^{i-1}P_j} B\big(\frac{\langle w, u \rangle}{\|\langle w, u \rangle\|}, \frac{c'c_{2s}}{2\|\langle w, u\rangle\|} \big) \cap C^d.\]  Now Player $B$ may freely (but according to the rules of the strong Schmidt game) choose $B_i \subset A_{i-1}$ (once this is done, $M_i$ is determined and so is $P_i$).  Now every point of $BB_i$ is contained in $AAA_{i-1}$.\footnote{Every point of $BB_i$ is within $2\rho(A_{i-1})$ of $c(B_i)$ and $c(B_i)$ is within $\rho(A_{i-1})$ of $c(A_{i-1})$; these two facts show the assertion.}  Thus, all points of $BB_i \cap C^d$ are outside of \[\bigcup_{\langle w, u \rangle \in \cup_{j=1}^{i-1}P_j} B\big(\frac{\langle w, u \rangle}{\|\langle w, u \rangle\|}, \frac{c'c_{2s}}{2\|\langle w, u\rangle\|} \big) \cap C^d.\]

Now the proof of the induction step is the same as the initial step, except $i$ replaces $1$ and $M_{i-1}/\kappa_0$ replaces $3 \sqrt{|m|}$ everywhere.  Thus, we may conclude that all points of $AAA_{i} \cap C^d$ are outside of \[\bigcup_{\langle w, u \rangle \in \cup_{j=1}^{i}P_j} B\big(\frac{\langle w, u \rangle}{\|\langle w, u \rangle\|}, \frac{c'c_{2s}}{2\|\langle w, u\rangle\|} \big) \cap C^d.\]

Thus, all points of $BB_{i+1} \cap C^d$ are outside of \[\bigcup_{\langle w, u \rangle \in \cup_{j=1}^{i}P_j} B\big(\frac{\langle w, u \rangle}{\|\langle w, u \rangle\|}, \frac{c'c_{2s}}{2\|\langle w, u\rangle\|} \big) \cap C^d\] for all $i \in \NN$.


\subparagraph*{\it{Finishing the proof}}

Let $\tilde{P}_0$ denote the set of normalized points of $P_0$.  Changing norms from $\|\cdot\|_2$ to $\|\cdot\|$ using the constant $c'_{2s}>0$, we have shown that the following is an $\a$-strongly winning set:

\[BA':=\big{\{}v \in \partial X \backslash \tilde{P}_0 \mid \big{\|}\frac{\langle w, u \rangle}{\|\langle w, u \rangle\|} - v\big{\|} > \frac{c'c_{2s}c'_{2s}}{2\|\langle w, u\rangle\|} \textrm{ for all } \langle w, u \rangle \in \cup_{i=1}^\infty P_i\big{\}}.\]

Let $v \in BA'$.  Since $P_0$ is a finite set, there exists a unique minimal positive distance (with respect to $\|\cdot\|$) between $v$ and the normalized points of $P_0$.  Thus, one can shrink the constant to some $c(v)>0$ such that $c \leq \frac{c'c_{2s}c'_{2s}}{2}$ and \[\big{\|}\frac{\langle w, u \rangle}{\|\langle w, u \rangle\|} - v\big{\|} \geq \frac{c}{\|\langle w, u\rangle\|} \textrm{ for all } \langle w, u \rangle \in X_m(\ZZ)\backslash \{\langle 0, 0 \rangle\}\big{\}},\] thereby implying that the $\a$-strongly winning set $BA'$ is $BA_{\partial X}^\psi(X_m(\ZZ))$.

\begin{rema}
Since we always choose $A_i$ such that $\rho(A_i) = \a \rho(B_i)$, our proof shows both $\a$-strong winning and $\a$-winning.
\end{rema}

\begin{rema}\label{rmkGeneralQuadForm}
An arbitrary (rational, nondegenerate, indefinite) quadratic form $\tilde{Q}$ is equivalent to some diagonal (rational, nondegenerate, indefinite) quadratic form $Q$ (see Corollary~7.30 of~\cite{EKM}); hence there exists a matrix $M \in GL_{d+1}(\QQ)$ such that $Q (\cdot) = \tilde{Q} (M \cdot)$.  The proof for $\tilde{Q}$ is virtually the same as the proof for $Q$.  There are two versions of this proof; we give one here and leave the other one to the Conclusion.  The main change is that the vectors $\langle w, u \rangle$ in $\ZZ^{d+1}$ for the diagonal form $Q$ are now in $M^{-1} \ZZ^{d+1}$ for the arbitrary form.  We use the integral property of $\langle w, u \rangle$ in the proofs of Lemmas~\ref{lemIntegerSepartionLW} and~\ref{lemIntegerSepartionLC}; however, those proofs remain unchanged for arbitrary forms except that the constant $\kappa_0$ is multiplied by the square of the entry of $M$ with the largest denominator in absolute value--note that this denominator is an integer different from $0$.  Since $M$ depends only on the form, $\kappa_0$ is still a constant that depends on the form.  Now our proof above actually shows that approximation by, not integer lattice points (i.e. integer vectors), but by (the relevant) elements of $M^{-1} \ZZ^{d+1}$ results in an $\a$-strong winning and $\a$-winning set; let us denote this set by $BA''$.   A vector $v \in BA''$ satisfies \[\big{\|}\frac{\langle w, u \rangle}{\|\langle w, u \rangle\|} - v\big{\|} \geq \frac{c''}{\|\langle w, u\rangle\|}\] for some constant $c''(v)>0$ and all relevant elements $\langle w, u \rangle$ of $M^{-1} \ZZ^{d+1}$.  By applying Lemma~\ref{lemmCubeMissesLine}, we have that \[\big{\|}\frac{\langle w, u \rangle}{\|M\langle w, u \rangle\|} - \frac{v}{\|Mv\|}\big{\|} \geq \frac{c'''}{\|M\langle w, u\rangle\|}\] for some constant $c'''(v)>0$.  Since $\|M \cdot\|$ is a norm and all norms are equivalent on $\RR^{d+1}$, we have \[\big{\|}\frac{M\langle w, u \rangle}{\|M \langle w, u \rangle\|} - \frac{Mv}{\|Mv\|}\big{\|} \geq \frac{c}{\|M\langle w, u\rangle\|}\] for some constant $c(Mv)>0$.  This shows that $\fp(M(BA''))$ is badly approximable in the desired way.\footnote{Note that $v \in \partial X := \{v \in M^{-1} \RR^{d+1} \mid \tilde{Q}(M v) =0\} \cap C^d$ and, since $Mv$ satisfies $\tilde{Q} = 0$, so does $Mv/\|Mv\|$.  In other words, if we call $\partial X$ the boundary variety, then there are two boundary varieties here:  one for $\tilde{Q}$ and one for its equivalent diagonal form--the bijection $\fp \circ M$ restricts to a bijection (actually, a bilipschitz homeomorphism, as we shall see) of these boundary varieties.}  Now the map $\fp \circ M: C^d \rightarrow C^d$ is Lipschitz because $\|M \cdot \|$ is a norm, because any vector of $M(C^d)$ has $\|\cdot\|$-norm bounded between two universal positive constants, and because Lemma~\ref{lemmCubeMissesLine} applies.  But the inverse map is $\fp \circ M^{-1}: C^d \rightarrow C^d$, and thus the map $\fp \circ M: C^d \rightarrow C^d$ is bilipschitz (with constant $c_{M}\geq 1$ depending only on the arbitrary quadratic form).  Consequently, $\fp(M(BA''))$ is $\frac1{(8c^{2}_{M}c_\pi^2)}$-winning by Lemma~\ref{lemmLocalBilipWinningSetsweak} and its footnote.  An examination of the proof of Lemma~\ref{lemmLocalBilipWinningSetsweak} shows that we can make the same assertion for $\frac1{(8c^{2}_{M}c_\pi^2)}$-strong winning.  Now let $\a:= \frac1{(8c^{2}_{M}c_\pi^2)}$.  (Since $M$ is the identity for diagonal quadratic forms, this definition of $\a$ agrees with the one for those forms.)  Thus this shows the theorem for an arbitrary (rational, nondegenerate, indefinite) quadratic form.  
\end{rema}

\subsection{Proof of Theorem~\ref{thmBALightCone}}\label{secProofLightConeBA}

The key difference--indeed, simplification--between this case and the level-surface case is that one no longer needs to miss lines but only points because the role played by Proposition~\ref{propIntegerSepartionLW} is now played by Proposition~\ref{propIntegerSepartionLC}.  To prove the light-cone case, we follow the level-surface case and note the differences.  Define:  \[P_0 := \bigcup_{ \|U\|_{q_2} <1} X_{\|U\|_{q_2}} \bigg\backslash \{\langle 0, 0 \rangle\}.\]  Then $d_0$ is defined, analogously, with respect to $P_0$.  Force Player $B$'s balls to have radii strictly less than $R_0:=\frac 1 3 \min((2 \kappa_0)^{-1}, d_0/2, d_1/2, d_2/2,1/2, c_\pi /\sqrt{d}, R)$ where $d_1, d_2, c_\pi,$ and $R$ are the same as in the level-surface case.  Consequently, $B_1$ can contain at most one normalized (which, recall, means we divide by its sup norm) point $q_0$ of $P_0$.  Let $q = \pi(q_0)$.  

\subsubsection{Missing a point.}\label{secMissPoint} To miss $q$, we can, at random, pick a line $L$ through $q$ in $\mE$ and then follow Section~\ref{secMissLine} exactly or, note, that we can simplify that proof as follows.  Let $p :=c(B^\pi_1)$.  If $q$ does not lie on $\mN_p$, then the line through $p$ and $q$ and the line $\mN_p$ determine a $2$-plane $\mP_\mN$.  Now, one does not need Gram-Schmidt because there are only two directions $v$ and $-v$ in $T_p (\tilde{\partial} X) \cap \mP_\mN$.  Picking the direction that moves farther away from $q$ and following the rest of the proof in Section~\ref{secMissLine} shows that $A_1$ misses $q_0$.  If $q$ does lie on $\mN_p$, then any direction in $T_p (\tilde{\partial} X)$ will work, as they are all normal to $\mN_p$.  Thus, as in the level-surface case, we may, without loss of generality, assume that $B_1$ does not meet any normalized point of $P_0$.

\subsubsection{Essence of the proof}\label{secEssProofLC}Player $B$ has chosen $B_1$.  Define the constant \[c' := \min(\frac {(\a\b)^2}{4\kappa_0}, \frac {(\a\b)^2}{4}, \frac {\a\b\rho(B_1)}{4 \kappa_0}, \frac {\a\b\rho(B_1)}{4}).\]  Let $M_i$ be as in the level-surface case.  We window elements of $X_0(\ZZ)$ as follows:  \[P_1 := \bigcup_{1 \leq \|U\|_{q_2} \leq \frac{M_1}{\kappa_0}} X_{\|U\|_{q_2}}\] and, for natural numbers $i > 1$, \[P_i := \bigcup_{\frac{M_{i-1}}{\kappa_0} \leq \|U\|_{q_2} \leq \frac{M_i}{\kappa_0}} X_{\|U\|_{q_2}}.\]  
We see, as in the level-surface case, that the constant $c'$ is scaled correctly:

\begin{lemm}\label{lemmcprimecorrectLW}  
\begin{align*}
\textrm{We have} \quad c' \leq \frac 1 4 (\a\b)\rho(B_1) & \quad \textrm{when }  \quad i = 1, \\ 
\textrm{and} \quad \frac{c' \kappa_0}{M_{i-1}} \leq \frac 1 4 (\a\b)\rho(B_i) &  \quad \textrm{when }  \quad i > 1.
\end{align*}
\end{lemm}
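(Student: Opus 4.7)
The plan is to follow the argument of Lemma~\ref{lemmcprimecorrect} essentially verbatim, with the simplification that in the light-cone setting the $3\sqrt{|m|}$ factors vanish (they were absorbed into the new definition of $c'$ and into the bottom cutoff $\|U\|_{q_2}\geq 1$ for $P_1$).

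For the case $i=1$, the bound is immediate from the definition of $c'$: since $c'$ is defined as the minimum of four quantities, one of which is $\tfrac{\alpha\beta\rho(B_1)}{4}$, we trivially have $c' \le \tfrac{1}{4}(\alpha\beta)\rho(B_1)$.

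For the case $i>1$, I would first use that $M_{i-1} = \lceil \rho(B_{i-1})^{-1}\rceil \ge \rho(B_{i-1})^{-1}$, so $1/M_{i-1} \le \rho(B_{i-1})$. Combining this with the term $c' \le \tfrac{(\alpha\beta)^2}{4\kappa_0}$ in the definition of $c'$ gives
\[
\frac{c'\kappa_0}{M_{i-1}} \;\le\; \frac{(\alpha\beta)^2}{4}\,\rho(B_{i-1}).
\]
The remaining step is to convert $\rho(B_{i-1})$ into $\rho(B_i)$. Recall that by the agreement set up in Section~\ref{secProofLevelSurfaceBA} Player $A$ always chooses $\rho(A_n) = \alpha\rho(B_n)$, and the strong Schmidt game rule forces $\rho(B_n) \ge \beta\rho(A_{n-1})$. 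Iterating once gives $\rho(B_i) \ge \alpha\beta\,\rho(B_{i-1})$, whence
\[
\frac{(\alpha\beta)^2}{4}\,\rho(B_{i-1}) \;\le\; \frac{\alpha\beta}{4}\,\rho(B_i),
\]
which chains with the preceding inequality to yield the claim.

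There is no real obstacle: the lemma is a routine scaling check whose sole purpose is to guarantee that the windowing radii $c'/\|u\|_{q_2}$ used to exclude elements of $P_i$ from $A_i$ fit comfortably inside Player $A$'s ball at each stage of the induction. The only thing to verify carefully is that the particular term $\tfrac{(\alpha\beta)^2}{4\kappa_0}$ in the minimum defining $c'$ is what makes the inductive step work, and that $\tfrac{\alpha\beta\rho(B_1)}{4}$ is what makes the base case work; both are built into the definition by design, exactly as in Lemma~\ref{lemmcprimecorrect}.
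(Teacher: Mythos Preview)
Your proof is correct and mirrors the paper's argument exactly: the paper simply refers back to the proof of Lemma~\ref{lemmcprimecorrect}, which is the single chain $\frac{c'\kappa_0}{M_{i-1}} \leq \frac{1}{4}(\alpha\beta)^2\rho(B_{i-1}) \leq \frac{1}{4}(\alpha\beta)\rho(B_i)$ together with the observation that the $i=1$ case is immediate from the definition of $c'$. You have merely unpacked the two inequalities in that chain (using $M_{i-1}\geq\rho(B_{i-1})^{-1}$ and $\rho(B_i)\geq\alpha\beta\rho(B_{i-1})$) more explicitly than the paper does.
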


The rest of the proof is analogous to the level-surface case, except that we replace Proposition~\ref{propIntegerSepartionLW}  by Proposition~\ref{propIntegerSepartionLC}, in which case we never need to miss lines, only points.\footnote{In each $BB_i$, there can be at most one normalized point of $P_i$ by Proposition~\ref{propIntegerSepartionLC}.  A small ball around this point is what must be missed.}  Therefore, we may replace references to Section~\ref{secMissLine} by Section~\ref{secMissPoint}.  (Of course, Lemma~\ref{lemmcprimecorrectLW} replaces Lemma~\ref{lemmcprimecorrect} and $1$ replaces $3\sqrt{|m|}$.)  Also, since we need only miss points, the $d=2$ and the $d \geq 3$ cases have the same proof, namely the proof that is analogous to the $d \geq 3$ level-surface case.

\subsection{The Hausdorff dimension of the set of badly approximable vectors}  For winning subsets of manifolds, one shows that they have full Hausdorff dimension using Lemma~\ref{lemmWinningFullHDManifolds}, a lemma that requires certain bilipschitz homeomorphisms:

\begin{lemm}\label{lemmVarietyHasBilpMaps}
Let $d \geq 1$.  For every point $p \in \partial X$, there exists an open neighborhood $U$ of $\partial X$ containing $p$ and a bilipschitz homeomorphism $\varphi:  U \rightarrow \varphi(U) \subset \RR^{d-1}$.
\end{lemm}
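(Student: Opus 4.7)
The plan is to prove this by a local application of the implicit function theorem. Fix $p \in \partial X$. Since $p \in C^d$, at least one coordinate of $p$ has absolute value one; after relabeling coordinates and changing signs (and using the fact that the splitting of $Q$ in Section~\ref{secProofLevelSurfaceBA} is only chosen up to such permutations anyway), I would assume that the face $\mathcal{F} = \{y \in C^d : y_{d+1}=1\}$ contains $p$. Identifying the affine hyperplane $\mathcal{E} = \{y_{d+1}=1\}$ with $\RR^d$ via the first $d$ coordinates, I would view $\partial X \cap \mathcal{E}$ near $p$ as the zero set of the polynomial $F(z) := Q(z,1) - m$ on $\RR^d$.

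The generic case is when $\nabla F(p) \neq 0$. Since $Q$ is nondegenerate and $p \neq 0$, $\nabla Q(p) \neq 0$, and $\nabla F(p)$ is the projection of $\nabla Q(p)$ to the $\mathcal{E}$-directions; so $\nabla F(p) = 0$ forces $\nabla Q(p)$ to be parallel to $e_{d+1}$. In the generic case, the implicit function theorem supplies a $C^\infty$ graph-parametrization of $\{F = 0\}$ near $p$ over a $(d-1)$-dimensional affine subspace $V \subset \RR^d$, and the parametrization has invertible Jacobian at $p$; by continuity of the Jacobian and its inverse, both are bounded on a small enough neighborhood, yielding a bilipschitz homeomorphism $\varphi: U \to \varphi(U) \subset V \cong \RR^{d-1}$.

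The main obstacle I anticipate is the degenerate case where $\nabla F(p) = 0$. For diagonal $Q = \sum a_i Y_i^2 - \sum a_j Y_j^2$, this forces $p_i = 0$ for $i \leq d$, so $p = \pm e_{d+1}$, and then $Q(p) = \pm a_{d+1}$ forces $m = \pm a_{d+1}$, which is already a rather special (nongeneric) situation. In this case the Taylor expansion of $F$ about $p$ is itself a quadratic form $\tilde Q$ in $d$ variables (namely $Q$ with the last variable deleted), and $\partial X$ near $p$ is locally modeled on $\{\tilde Q = 0\}$ intersected with an open subset of the face; this is either $\{p\}$ (if $\tilde Q$ is definite), in which case the bilipschitz map to a point is trivial, or a quadric cone that can be parametrized explicitly by its generating ellipsoid and radial parameter, giving a bilipschitz map onto an open subset of $\RR^{d-1}$.

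The final case is when $p$ lies on the intersection of several faces (an edge or corner of $C^d$), so that a small neighborhood $U$ of $p$ in $\partial X$ meets more than one face. Here I would invoke the bilipschitz radial projection $\pi$ constructed in Section~\ref{subsubsecHaCoC}, which flattens a neighborhood of $\mathcal{F}$ inside $C^d$ onto the single affine hyperplane $\mathcal{E}$ via a bilipschitz homeomorphism; composing $\pi$ with the map $\varphi$ from the generic case (applied now in $\mathcal{E}$) reduces this case to the preceding ones. For $\tilde Q$ arbitrary rather than diagonal, the reduction via the matrix $M$ as in Remark~\ref{rmkGeneralQuadForm} transports all bilipschitz structure across the equivalence, so no additional difficulty appears.
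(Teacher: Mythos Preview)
Your overall strategy matches the paper's: for the single-face case represent $\partial X\cap\mathcal E$ locally as the graph of a smooth function (the paper does this via the explicit coordinate formulas borrowed from the proof of Lemma~\ref{lemmClosetoLinear} and then checks the two Lipschitz inequalities by hand, while you invoke the implicit function theorem and bound the Jacobian), and for the corner case precompose with the bilipschitz radial projection $\pi$ of Section~\ref{subsubsecHaCoC}. That part is fine and is essentially the paper's argument in more abstract language.

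The one genuine slip is in your defining polynomial. Recall that $\partial X:=\{Q=0\}\cap C^d$ is always a slice of the \emph{light cone}; the parameter $m$ indexes the approximating lattice $X_m(\ZZ)$ but plays no role in the definition of $\partial X$. Hence on the face $\{y_{d+1}=1\}$ the correct function is $F(z)=Q(z,1)$, not $Q(z,1)-m$. With this correction your own gradient computation shows that $\nabla F(z)=0$ forces $z=0$, i.e.\ $p=\pm e_{d+1}$, and then $Q(p)=\pm a_{d+1}\neq 0$, so $p\notin\partial X$. The ``degenerate case'' therefore never occurs, which is precisely why the paper's proof carries no branch for it (equivalently, in the paper's language, the hypersurface of Lemma~\ref{lemmClosetoLinear} always has its ``$m$'' nonzero, so some coordinate of $p$ in the face is bounded away from zero).

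It is worth noting separately that your proposed treatment of the degenerate case would not succeed even had it arisen: a neighbourhood of the vertex of an honest quadric cone $\{\tilde Q=0\}$ with $\tilde Q$ indefinite and $d\geq 3$ is not even a topological $(d-1)$-manifold, so no bilipschitz chart onto an open set of $\RR^{d-1}$ exists there; the ellipsoid-times-radial parametrisation you describe is bilipschitz only away from the vertex. Fortunately this is moot.
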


\begin{proof}
We may assume that $d \geq 2$, as the $d=1$ case is trivial.  Let us first consider the case where $p$ lies in exactly one face $\mF$ of $C^d$.  Then there exists a small enough open ball $U$ with center $p$ so that $U$ does not meet any $d-1$-dimensional face of $\mF$.  Consequently, we can consider $p$ and $U$ as lying in a hypersurface to which Lemma~\ref{lemmClosetoLinear} applies.\footnote{Replace $U$ with its intersection with the hypersurface in $\mF$.}  (The ball $U$ is small enough so that the Taylor approximation in the proof of Lemma~\ref{lemmClosetoLinear} applies.)  Let $f$ be as in the proof of Lemma~\ref{lemmClosetoLinear}, and thus if we write a point of $U$ as $p +Y$ (the vector $Y \in \RR^d$ is small), then $f(p+Y)=0$ is satisfied.  Now $p+Y$ has coordinates--translate the coordinate system to have origin at $p$--$(Y_1, \cdots, Y_{d-1}, -p_d\pm\sqrt{\a^{-1}_d[m-\a_1 (Y_1+ p_1)^2  - \cdots - \a_{d-1} (Y_{d-1}+p_{d-1})^2]})$.\footnote{This $m$ is different from the $m$'s in Theorems~\ref{thmBALevelSurface} and~\ref{thmBALightCone}.  Also, the ambiguity in signs is made unambiguous by $U$.}  Furthermore, we can write $p+Y$'s unique corresponding point on the tangent space at $p$ in coordinates as $q:=(Y_1, \cdots, Y_{d-1}, -\frac {1}{\a_d p_d }(\a_1 p_1Y_1 + \cdots + \a_{d-1} p_{d-1} Y_{d-1})),$ as in the proof of Lemma~\ref{lemmClosetoLinear}.

Define $\varphi$ as follows:  $\varphi(p + Y) = q$; evidently, it is a bijection.  Let $p+Y, p+Z \in U$.  Then $\|\varphi(p+Y) - \varphi(p+Z)\|^2_2 = (Y_1 - Z_1)^2 + \cdots + (Y_{d-1} - Z_{d-1})^2 + [\frac {1}{\a_d p_d }\big(\a_1 p_1(Y_1-Z_1) + \cdots + \a_{d-1} p_{d-1} (Y_{d-1}-Z_{d-1})\big)]^2$.  Now the last term is  just the square of \[\big|\frac {1}{\a_d p_d }(\a_1 p_1, \cdots, \a_{d-1}p_{d-1}) \cdot (Y_1- Z_1,  \cdots, Y_{d-1} - Z_{d-1})\big| \leq \|(Y_1- Z_1,  \cdots, Y_{d-1} - Z_{d-1})\|_2\] where the inequality is up to multiplication by a constant.  Consequently, we have \[\|(Y_1- Z_1,  \cdots, Y_{d-1} - Z_{d-1})\|_2\leq \|\varphi(p+Y) - \varphi(p+Z)\|_2 \leq c_1 \|(Y_1- Z_1,  \cdots, Y_{d-1} - Z_{d-1})\|_2\] for a constant $c_1\geq 1$ depending on $p$.  

For the inverse map, first recall that the $\sqrt{\cdot}$ function is Lipschitz on any proper, finite-length interval (in $\RR_{> 0}$) bounded away from $0$ because its derivative is bounded on that interval.  Now, for any $p +Y \in U$, the real-valued function $g(p+Y):=\a_d^{-1}[m-\a_1 (Y_1+ p_1)^2  - \cdots - \a_{d-1} (Y_{d-1}+p_{d-1})^2]$ is near zero if and only if $Y_d$ is near $-p_d$; recall from the proof of Lemma~\ref{lemmClosetoLinear} that $|p_d|$ is bigger than some positive constant.  Shrink $U$ if necessary to force all points in $U$ to have absolute value of the $d$-th coordinate (with respect to $p$ as origin) much less than this constant.  Hence, $g(U)$ is bounded away from zero.  And thus, in $U$, we have \begin{align*}\|p+Y -  & p -Z\|_2^2 \leq  (Y_1 - Z_1)^2 + \cdots +  (Y_{d-1} - Z_{d-1})^2 +  \\ & c_2' \frac 1 {\a^2_d} \bigg(\a_1 [(Y_1+ p_1)^2 - (Z_1+ p_1)^2] + \cdots + \a_{d-1} [(Y_{d-1}-p_{d-1})^2-(Z_{d-1}-p_{d-1})^2]\bigg)^2\end{align*} for some positive constant $c_2'$ (the bilipschitz constant of $\sqrt{\cdot}$).  Also, since $Y$ and $Z$ are small, we have that \[|(Y_i+ p_i)^2 - (Z_i+ p_i)^2| = |Y_i - Z_i| |Y_i+Z_i +2 p_i| \leq |Y_i- Z_i|\]  where the inequality is up to multiplication by some positive constant.
Now $(x+y)^2 \leq 3 (x^2+y^2)$.  Therefore, there exists a constant $c_2 \geq 1$ depending on $p$ such that  \[\|(Y_1- Z_1,  \cdots, Y_{d-1} - Z_{d-1})\|_2\leq \|p+Y - p+Z\|_2 \leq c_2 \|(Y_1- Z_1,  \cdots, Y_{d-1} - Z_{d-1})\|_2.\]  We have shown that $\varphi$ is bilipschitz for $p$ lying in only one face.

When $p$ lies in more than one face, we can follow the technique in Section~\ref{subsubsecHaCoC} and repeat the proof for the previous case with $\pi(U)$ replacing $U$.  Then the desired map $\varphi \circ \pi$ is bilipschitz. \end{proof}

\begin{rema}
To show the lemma for an arbitrary (rational, nondegenerate, indefinite) quadratic form, one may use $\varphi\circ \fp \circ M^{-1}$ if the point $\fp \circ M^{-1}(p)$ lies on only one face of $C^d$ or $\varphi \circ \pi \circ \fp \circ M^{-1}$ if it lies on more than one face--see Remark~\ref{rmkGeneralQuadForm}.
\end{rema}

\noindent These bilipschitz homeomorphisms are charts, which, together, constitute an atlas for $\partial X$.


\section{Auxiliary observations on badly approximable vectors}\label{secAuxObsonBA}  In this section, we prove the auxiliary results stated in the Introduction.  Let $Q$ be a diagonal, rational, nondegenerate, indefinite quadratic form;\footnote{Letting $Q$ be diagonal does not result in any loss of generality; for an arbitrary (rational, nondegenerate, indefinite) quadratic form, one can follow the proof in Remark~\ref{rmkGeneralQuadForm} and perform the analogous changes to the proofs in this section.} denote the last variable in the form $x_{d+1}$ by $y$.  Since $Q$ is nondegenerate, the coefficient of $y$ is nonzero.  We may divide by the negation of this coefficient without loss of generality.  Thus, we may assume that $Q = q - y^2$ for some diagonal, nondegenerate, rational quadratic form $q$--note that, since $Q$ is indefinite, $q$ cannot be negative definite.  We use the notation $\langle v,w \rangle_1$ to denote a $d+1$-vector comprised of a $d$-vector $v$ and a real number $w$.  Define the set \[V_q:= \{x \in \RR^{d} \mid q(x) = 1\}.\]   For $v \in V_q$, it is immediate that $Q(\langle v,y \rangle_1) = 0$ is equivalent to the condition that $y = \pm 1$.  
Unlike the natural splitting of Section~\ref{secProofofConjecture}, the splitting $\langle \cdot,\cdot \rangle_1$ is artificial, but it leads to the following useful lemma:

\begin{lemm} \label{lemmProjBA}
Let $m \in \QQ$ and $\psi(t) = t^{-s}$ for $s > 0$.  Let $v \in \RR^d$.  If \begin{enumerate}
\item $v/\|v\| \in V_q$ and
\item there exists a constant $c(v)>0$ such that, for all $\langle x,y \rangle_1 \in X_m(\ZZ) \big\backslash \ZZ^d \times \{0\}$, we have \[ \|\frac v {\|v\|}  - \frac x y\| \geq c\psi(|y|),\] then $\langle \frac{v}{\|v\|}, 1 \rangle_1 \in BA_{\partial X}^\psi(X_m(\ZZ))$.\footnote{Since $\|\langle \frac v{\|v\|},1 \rangle_1 - \frac{\langle x,0 \rangle_1}{\|\langle x,0 \rangle_1\|}\| \geq 1$, we need not consider approximation by any  $\langle x,0 \rangle_1 \in X_m(\ZZ) \backslash \{\mathbf{0}\}$.}
\end{enumerate}

\end{lemm}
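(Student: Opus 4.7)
The plan is to unpack the two pieces of data the lemma supplies---that $\langle \tilde v,1\rangle_1$ (with $\tilde v:=v/\|v\|$) sits on the light cone, and that $\tilde v$ is badly approximable by rationals of the form $x/y$ coming from $X_m(\ZZ)$---and to show these combine to give the sup-norm bad approximability of $\langle \tilde v,1\rangle_1$ by radial projections of points of $X_m(\ZZ)$. First I would check that $\langle \tilde v,1\rangle_1\in\partial X$: hypothesis (1) gives $Q(\langle \tilde v,1\rangle_1)=q(\tilde v)-1=0$, and since $\|\cdot\|$ is the sup norm, $\|\tilde v\|=1$ forces $\|\langle \tilde v,1\rangle_1\|=1$, so the point is on $C^d$. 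The footnote then disposes of approximation by any $\langle x,0\rangle_1\in X_m(\ZZ)\setminus\{\mathbf 0\}$, since its projection has last coordinate $0$ while $\langle \tilde v,1\rangle_1$ has last coordinate $1$, so the sup-norm distance is $\geq 1\geq \psi(N)$ whenever $N\geq 1$.

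For the remaining lattice points $\langle x,y\rangle_1$ with $y\neq 0$ I would set $N:=\|\langle x,y\rangle_1\|=\max(\|x\|,|y|)\geq 1$ and aim to lower bound
\[
D:=\bigl\|\mathfrak p(\langle x,y\rangle_1)-\langle \tilde v,1\rangle_1\bigr\|=\max\Bigl(\bigl\|\tfrac{x}{N}-\tilde v\bigr\|,\bigl|\tfrac{y}{N}-1\bigr|\Bigr).
\]
I split into cases. If either $y<0$ or $|y|<N/2$, then $|y/N-1|\geq 1/2$, giving $D\geq\tfrac12\geq\tfrac12\psi(N)$ since $\psi(N)=N^{-s}\leq 1$ for $N\geq 1$. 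Otherwise $y>0$ and $N/2\leq y\leq N$; the trick is the algebraic identity
\[
\tfrac{x}{N}-\tilde v=\tfrac{y}{N}\bigl(\tfrac{x}{y}-\tilde v\bigr)-\bigl(1-\tfrac{y}{N}\bigr)\tilde v,
\]
which together with $\|\tilde v\|=1$ and the reverse triangle inequality gives
\[
\bigl\|\tfrac{x}{N}-\tilde v\bigr\|+\bigl|\tfrac{y}{N}-1\bigr|\geq \tfrac{y}{N}\bigl\|\tfrac{x}{y}-\tilde v\bigr\|.
\]
Now hypothesis (2) yields $\|x/y-\tilde v\|\geq c\psi(|y|)$, and monotonicity of $\psi$ combined with $|y|\leq N$ gives $\psi(|y|)\geq \psi(N)$; using $y/N\geq 1/2$, the right side is $\geq (c/2)\psi(N)$, whence $D\geq (c/4)\psi(N)$.

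Combining the two cases, $c'(v):=\min(c(v)/4,\,1/2)$ is a uniform positive constant witnessing that $\langle \tilde v,1\rangle_1\in BA_{\partial X}^{\psi}(X_m(\ZZ))$. There is really no substantial obstacle here: the content is the single identity turning the normalization by $N$ into the normalization by $y$, and the mild book-keeping to handle the remaining sign/size regimes of $y$ relative to $N$.
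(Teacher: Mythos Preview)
Your argument is correct. The verification that $\langle \tilde v,1\rangle_1\in\partial X$ and the treatment of the $y=0$ case match the paper, but for the main estimate you take a different and more elementary route. The paper argues geometrically: from hypothesis~(2) one has $\bigl\|\langle \tilde v,1\rangle_1-\langle x,y\rangle_1/y\bigr\|\geq c\psi(|y|)$, and then invokes a single positive constant $c'$ bounding the local distortion between the two radial projections (onto $C^d$ and onto the hyperplane $\{Y_{d+1}=1\}$) near the point $\langle\tilde v,1\rangle_1$, finishing with $\|\langle x,y\rangle_1\|\geq|y|$ and the monotonicity of $\psi$. Your approach instead splits on the size and sign of $y$ relative to $N=\|\langle x,y\rangle_1\|$: when $y$ is negative or small the last coordinate alone gives $D\geq 1/2$, and otherwise the algebraic identity $\tfrac{x}{N}-\tilde v=\tfrac{y}{N}\bigl(\tfrac{x}{y}-\tilde v\bigr)-\bigl(1-\tfrac{y}{N}\bigr)\tilde v$ converts the $N$-normalization into the $y$-normalization with controlled error. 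The advantage of your argument is that it is entirely explicit (yielding the concrete constant $\min(c/4,1/2)$) and does not appeal to any unproved distortion bound; the paper's approach is shorter to state but leaves the ``local distance distortion'' constant $c'$ unquantified and implicitly relies on points with $y\leq 0$ or $|y|$ small being handled trivially, which is exactly what your case split makes explicit.
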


\begin{proof}
By condition (1), $Q(\langle \frac{v}{\|v\|}, 1 \rangle_1) =0$.  Hence, $\langle \frac{v}{\|v\|}, 1 \rangle_1 \in \partial X$.

By condition (2), we obtain $\|\frac {\langle v, \|v\|\rangle_1} {\|v\|}  - \frac {\langle x, y \rangle_1} y\| \geq c\psi(|y|)$.  Depending only on $\langle \frac{v}{\|v\|}, 1 \rangle_1$, there is a positive constant $c'$ (which gives a bound for the local distance distortion when changing from one radial projection to the other\footnote{The two radial projections are onto $C^d$ and onto the $d$-dimensional hyperplane whose last coordinate is~$1$.}) such that $ c' \|\frac {\langle v, \|v\|\rangle_1} {\|\langle v, \|v\|\rangle_1\|}  - \frac {\langle x, y \rangle_1} {\|\langle x, y \rangle_1\|}\| \geq \|\frac {\langle v, \|v\|\rangle_1} {\|v\|}  - \frac {\langle x, y \rangle_1} y\|$.  Since $\|\langle x, y \rangle_1\| \geq |y|$, the desired result is immediate.

\end{proof}

\begin{rema}
If $\|v\| \leq 1$, then replacing $\|v\|$ with $1$ everywhere in the same proof above shows that $\langle v, 1 \rangle_1\in BA_{\partial X}^\psi(X_m(\ZZ))$.  
We can also replace $X_m(\ZZ)$ by any subset of $\ZZ^{d+1}$.
\end{rema}

This lemma helps to prove our results for $d=1$:

\begin{proof}[Proof of Theorem~\ref{thmDrCor}]

Note that $\{(\pm 1/\sqrt{\a}, \pm 1)\} = V_q \times \{\pm 1\}$.  Let $v \in V_q$.  

Case:  $\sqrt{\a}$ is rational.  Write $\sqrt{\a} = p/p'$ where $p,p'$ are relatively prime positive integers.  First, assume that $m \neq 0$.  Let $\varepsilon := \min\{|z| \mid z \in \frac 1 p \ZZ \backslash \{0\}\}$.   If there exists some $(x, y) \in \ZZ^2$ such that $\|yv - x\| < \varepsilon$, then $x = yv$ and $Q(x,y) = 0$.  Consequently, $(x,y) \notin X_m(\ZZ)$.  Since $|v| \leq 1$, it follows from Lemma~\ref{lemmProjBA} that $\langle v, 1\rangle_1 \in BA_{\partial X}^\psi(X_m(\ZZ))$.  Since the negation of a badly approximable vector is badly approximable, $\{(\pm 1/\sqrt{\a}, \pm 1)\} \subset BA_{\partial X}^\psi(X_m(\ZZ))$.  Now assume $m=0$.  Consequently, $(p',p)$ or $(-p',p)$ are in $X_0(\ZZ)\backslash \{\boldsymbol{0}\}$.  This implies that $(v,1) = \frac{(p',p)}{\|(p',p)\|}$ or $(v,1) = \frac{(-p',p)}{\|(-p',p)\|}$.  Negating these last two shows that each of the four points of $\partial X$ is approximable by itself; thus, $BA_{\partial X}^\psi(X_0(\ZZ)) = \emptyset$. 

Case:  $\sqrt{\a}$ is irrational.  Thus, $v$ is an irreducible quadratic and badly approximable as a real number.  Therefore, there exits a constant $ 1/2 >k(v) >0 $ such that, for all $y \in \ZZ \backslash \{0\}$ and all $x \in \ZZ$, we have $\|yv - x\| \geq k/|y|$.  Let $\varepsilon >0$ be chosen so that $\a \varepsilon^2 <  k \sqrt{\a}$.  Now if there exists some $(x, y) \in \ZZ^2 \backslash \{(0,0)\}$ such that $\|yv - x\| < \varepsilon$, then $x = yv + \ell/|y|$ where $|\ell| \geq |k|$ and $\ell^2/y^2 < \varepsilon^2$.
Note that $Q(\langle x, y\rangle_1) = 2 \ell \sqrt{\a} + \ell^2 \a/y^2$ or $-2 \ell \sqrt{\a} + \ell^2 \a/y^2$.  Therefore, $|Q(\langle x, y\rangle_1)| \geq 2 |\ell | \sqrt{\a} - \ell^2 \a/y^2 \geq  k \sqrt{\a}$.  Consequently, for all $|m| < k \sqrt{\a}$, $(x,y) \notin X_m(\ZZ)$.\footnote{For both $v$ and $-v$, the same $k$ can be used.}


\end{proof}

Using the lemma again, we can show that, for $m \neq 0$, $\psi(t) = t^{-s}$, and $s>2$, all vectors are badly approximable:

\begin{proof}[Proof of Theorem~\ref{thmApproximationNotPossSbigger2}]
We show that $\partial X \backslash BA_{\partial X}^\psi(X_m(\ZZ)) = \emptyset$.  Assume not.  Let $v' \in \partial X \backslash BA_{\partial X}^\psi(X_m(\ZZ))$.  By permuting the coordinates if necessary and noting that $-v'$ must also not be badly approximable, we may assume that $v' := \langle v, 1\rangle_1$.  Thus $v \in V_q$.  Then by Lemma~\ref{lemmProjBA}, we have that infinitely many $\langle x,y \rangle_1 \in X_m(\ZZ)\big\backslash \ZZ^d \times \{0\}$ satisfy the inequality \[ \| y v   - x \| < |y|^{1-s}.\]  Consequently, $x = yv + (c_1 |y|^{1-s}, \cdots, c_d |y|^{1-s})$ where all of the $c_i$s are smaller than $1$ in absolute value.

Since $\langle x, y \rangle_1 \in X_m(\ZZ)$, we have that $m = q(x) - y^2$.  But, $q(x) -y^2 = 2 k_1c_1 v_1 y |y|^{1-s} + k_1c^2_1 |y|^{2-2s} + \cdots+ 2 k_d c_dv_d y |y|^{1-s} + k_dc^2_d |y|^{2-2s}$ where $k_i$s are the coefficients of $q$ and $v_i$s are the components of $v$ (note $\|v\|\leq 1$).  Thus, $|q(x) - y^2| \leq \a |y|^{2-s} + \b |y|^{2 - 2s}$ where $\a$ and $\b$ are bounded constants.  Since there are infinitely many $\langle x, y \rangle_1 \in X_m(\ZZ)$ that satisfy that last inequality (equivalent to $|y| \rightarrow \infty$), the constant $m =0$, a contradiction.
\end{proof}

\begin{rema}\label{remaKleinbocksRemark}
The proof of Theorem~\ref{thmApproximationNotPossSbigger2} is even stronger than the assertion of the theorem:  for $s >2$ (and $m \neq 0$), one cannot approximate even with a sequence of vectors on $X_m$ with real components.  Thus, unlike for $s=2$--see Example 5.2 of~\cite{GS}, for $s >2$ (and $m \neq 0$), the notion of approximation is not meaningful.\footnote{Thanks to D.~Kleinbock for noticing that the proof is stronger than the assertion being proved.}  
\end{rema}

Finally, we show that $X_m(\ZZ)$, and not $\ZZ^{d+1}$, is the appropriate subset of the integer lattice to use for the set of badly approximable vectors.  Let $\|x\|_{\ZZ} = \inf_{z \in \ZZ^{d+1}} \|x-z\|$.  To show this, we first need a general lemma:

\begin{lemm}\label{lemmBAInfNormConvert}
Let $v \in \partial X$.  Then ${v} \in BA_{\partial X}^\psi(\ZZ^{d+1})$ if and only if there exists a constant $1 \geq c>0$ (depending on ${v}$) such that, for all $x \in \ZZ^{d+1}\backslash \{\boldsymbol{0}\}$, we have $\|\|x\|{v}\|_{\ZZ} \geq c$.\footnote{This lemma is still true if we replace $\ZZ^{d+1}$ with a subset everywhere.}
\end{lemm}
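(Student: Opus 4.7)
The plan is to reformulate the badly approximable condition, for $\psi(t)=t^{-1}$, in a form that involves $\|x\|v$ directly: since $v \in \partial X \subset C^d$ we have $\|v\|=1$, and for $x \in \ZZ^{d+1}\setminus\{\mathbf 0\}$ the inequality $\|\fp(x)-v\| \geq c\,\psi(\|x\|)$ is (after multiplying by $\|x\|$) equivalent to $\|x - \|x\|v\| \geq c$. The lemma then asserts that this condition for every nonzero integer $x$ is equivalent to the uniform lower bound $\|\|x\|v\|_{\ZZ} \geq c$.

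The backward direction is immediate: given $x \in \ZZ^{d+1}\setminus\{\mathbf 0\}$, choosing $z=x$ in the infimum defining $\|\cdot\|_{\ZZ}$ yields $\|\|x\|v\|_{\ZZ} \leq \|x - \|x\|v\|$, so any uniform bound $\|\|x\|v\|_{\ZZ} \geq c$ transfers directly into the required BA inequality.

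For the forward direction, suppose $v \in BA_{\partial X}^\psi(\ZZ^{d+1})$ with constant $c \leq 1$ (shrinking if necessary). Given $x \in \ZZ^{d+1}\setminus\{\mathbf 0\}$, pick $z \in \ZZ^{d+1}$ realizing the infimum, so $\|\|x\|v - z\| = \|\|x\|v\|_{\ZZ}$. If $z = \mathbf 0$ then $\|\|x\|v\|_{\ZZ} = \|x\|\cdot\|v\| = \|x\| \geq 1 \geq c$, which is stronger than needed. If $z \neq \mathbf 0$, apply the BA hypothesis to $z$ itself to obtain $\|z - \|z\|v\| \geq c$. Then by the triangle inequality,
\[
c \leq \|z - \|z\|v\| \leq \|z - \|x\|v\| + \bigl|\|x\| - \|z\|\bigr|\cdot\|v\| = \|\|x\|v\|_{\ZZ} + \bigl|\|x\| - \|z\|\bigr|.
\]
The reverse triangle inequality, together with $\|\|x\|v\| = \|x\|$, gives $\bigl|\|x\| - \|z\|\bigr| = \bigl|\|\|x\|v\| - \|z\|\bigr| \leq \|\|x\|v\| - z\| = \|\|x\|v\|_{\ZZ}$. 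Hence $c \leq 2\|\|x\|v\|_{\ZZ}$, so the conclusion holds with the constant $c/2$.

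The only subtlety, and probably the sole obstacle to a clean write-up, is the edge case $z = \mathbf 0$, which is handled by the trivial observation that a nonzero integer vector has sup-norm at least $1$; and the bookkeeping that $c \leq 1$ may be assumed without loss of generality, which is automatic since $\|\|x\|v\|_{\ZZ} \leq 1/2$ always (a point of $\RR^{d+1}$ lies at sup-distance at most $1/2$ from $\ZZ^{d+1}$). The footnote remark that $\ZZ^{d+1}$ can be replaced by a subset follows because in the forward direction the integer vector $z$ is not required to lie in the subset; one instead chooses $z$ from all of $\ZZ^{d+1}$, but the BA hypothesis applied to $z$ is only needed when $z$ itself lies in the relevant subset—this would require a short additional comment but does not change the structure of the argument.
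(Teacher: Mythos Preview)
Your argument is correct. The backward direction is the same in both, and in the forward direction you give a valid triangle-inequality estimate yielding the constant $c/2$. The paper takes a slightly different route that avoids the factor of $2$: it observes that for any $y\in\ZZ^{d+1}$, if $\|y-\|x\|v\|<c\le 1$ then the reverse triangle inequality gives $\bigl|\,\|y\|-\|x\|\,\bigr|<1$, and since sup norms of integer vectors are nonnegative integers this forces $\|y\|=\|x\|$; one may then apply the BA hypothesis directly to $y$ (which is nonzero because $\|y\|=\|x\|\ge 1$) to get $\|y-\|x\|v\|\ge c$, a contradiction. Thus every $y\in\ZZ^{d+1}$ satisfies $\|y-\|x\|v\|\ge c$ with the \emph{same} constant. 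Your approach is a touch more robust in that it does not rely on the integrality of $\|y\|$, at the cost of halving the constant; the paper's approach is the one that adapts cleanly to the footnote.

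On that footnote: your discussion is a bit tangled. The intended reading of ``replace $\ZZ^{d+1}$ with a subset everywhere'' includes the definition of $\|\cdot\|_{\ZZ}$ itself, i.e.\ one passes to $\|w\|_{Z}:=\inf_{z\in Z}\|w-z\|$. With that reading the paper's integrality argument goes through verbatim (the minimizer $y$ now ranges over $Z$, and in the case $\|y\|=\|x\|$ one has $y\in Z\setminus\{\mathbf 0\}$, so the BA hypothesis for $Z$ applies). Your version, which keeps $\|\cdot\|_{\ZZ}$ relative to the full lattice, runs into exactly the problem you flag: the nearest lattice point $z$ need not lie in $Z$, so the BA hypothesis cannot be invoked for it.
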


\begin{proof}
Let ${v} \in BA_{\partial X}^\psi(\ZZ^{d+1})$.  Then there exists a constant $1 \geq c>0$ such that, for all $x \in \ZZ^{d+1}\backslash \{\boldsymbol{0}\}$, we have $\big{\|}x - \|x\| {v}\big{\|} \geq c$.  Let $y \in \ZZ^{d+1}$ such that $\|y\| = \|x\|$.  Then $\big{\|}y - \|x\| {v}\big{\|} \geq c$.  

Now let $y \in \ZZ^{d+1}$ be such that $\|y\| \neq \|x\|$.  Assume that $\big{\|}y - \|x\| {v}\big{\|} < c$.  Thus, $\big{|}\|y\| - \|x\| \|{v}\|\big{|} < c$.  Since $\|{v}\| =1$, we obtain $\|y\| = \|x\|$, indicating that our assumption, $\big{\|}y - \|x\|v\big{\|} < c$,  is false.  
Consequently, we may conclude that ${v} \in BA_{\partial X}^\psi(\ZZ^{d+1})$ is equivalent to the existence of a constant $1 \geq c>0$ (depending on ${v}$) such that, for all $x \in \ZZ^{d+1}\backslash \{\boldsymbol{0}\}$, we have $\|\|x\|{v}\|_{\ZZ} \geq c$.
\end{proof}

\begin{proof}[Proof of Theorem~\ref{theoFullLatticeTooBig}]
Without loss of generality, we may assume that $s=1$.  Let $v \in \partial X$ and $T_v$ denote translation by $v$ on the torus $\TT^{d+1}:=\RR^{d+1}/\ZZ^{d+1}$.  It is well-known that $\overline{\Or_{T_v}(\boldsymbol{0})}$ is a finite union of $k$-dimensional tori, where $0 \leq k \leq d$, and that the restriction of $T_v$ to each of these tori is minimal.  Now $\boldsymbol{0}$ lies on one of these tori.  Fix this tori.  If $k >0$, then, because the orbit closure is a finite union of closed subsets of  $\TT^{d+1}$, this tori must contain $T^j_v(\boldsymbol{0})$ for some $j \in \ZZ \backslash \{0\}$.  By minimality, the orbit of $jv$ under the restriction is dense, and hence, for every $\varepsilon >0$, there exists an $n \in \ZZ \backslash \{0\}$ such that $\|nj{v}\|_{\ZZ} < \varepsilon$.  Thus, $v \notin BA_{\partial X}^\psi(\ZZ^{d+1})$ by Lemma~\ref{lemmBAInfNormConvert}.

If $k=0$, then the orbit closure is a finite union of points and thus the translation is periodic and the result is obvious.

 \end{proof}

\begin{rema}
Both Lemma~\ref{lemmBAInfNormConvert} and Theorem~\ref{theoFullLatticeTooBig} can be asserted with $C^d$ replacing $\partial X$ everywhere--we do not use the variety in these proofs at all.
\end{rema}

\section{Conclusion}

In this paper, we have shown that the set of vectors that are badly approximable by the appropriate (and natural) set of integer lattice points is $\a$-strong winning (and $\a$-winning) and whence has full Hausdorff dimension and the countable intersection property.  For diagonal (rational, nondegenerate, indefinite) quadratic forms, the constant $\a$ (which is called the winning parameter~\cite{ET}) depends only on the dimension of the light-cone from which both our badly approximable vectors and the integer lattice points that we use to approximate come.  As mentioned (in the beginning of Section~\ref{secProofLevelSurfaceBA}), one should, for these diagonal forms, be able to replace $\a$ by the largest possible winning parameter of $1/2$ using a certain technique, which is found in Schmidt's original paper~\cite{Sch2} and involves replacing each iteration of the game that we played with a finite number of iterations of a certain kind--roughly, one can characterize these additional steps as ``pushing'' in a certain direction (namely, the chosen direction $v$ from our proof, which, recall, moves normally away from the neighborhood of the line or point to be missed).  

For arbitrary (rational, nondegenerate, indefinite) quadratic forms, the winning parameter $\a$ depends on the dimension of the light-cone and on the form as shown in Remark~\ref{rmkGeneralQuadForm}.  The proof in the remark concludes with an application of the bilipschitz map $\fp \circ M$ after the induction is finished (see the remark for the definition of the notation).  This, however, is incompatible with the technique of Schmidt mentioned in the previous paragraph because the winning parameter could decrease as stated in Lemma~\ref{lemmLocalBilipWinningSetsweak} and its footnote.  To surmount this obstruction, we outline another proof of the assertion in the remark.  Since the map $\fp \circ M$ is bilipschitz, we can treat it in a manner similar to the bilipschitz map $\pi$ (Section~\ref{subsubsecHaCoC}), namely apply it at every stage of the induction proof as we did $\pi$--note that this application of $\fp \circ M$ is done after $A_i$ is chosen in the $i$-th step of the induction and hence all the geometry is handled as in the diagonal form case.  This gives an alternate proof (alluded to in the remark) that for arbitrary forms the desired set is $\a$-winning and $\a$-strong winning (where $\a$ is as in the remark).  Now combining this alternate proof with the technique of Schmidt, we should be able to replace the winning parameter $\a$ with the largest possible winning parameter $1/2$ for arbitrary forms.  Yet, finding the optimal winning parameter is of minor interest because doing so does not produce, to the author's knowledge, any interesting new corollaries.

To obtain interesting new corollaries, we can restrict the game to a large enough fractal lying on the variety and on the cube (i.e. $\partial X$), as, for example, \cite{Fi},~\cite{Fi2},~\cite{BFK},~\cite{BBFK},~\cite{BHKV},~\cite{KTV},~\cite{KW2}, and~\cite{ET} do for Euclidean space.  These fractals arise from (the pullback under some suitable coordinate charts of) the support of what-are-called absolutely friendly and fitting measures (see Section~2.3 of~\cite{ET} for the definitions).\footnote{The development of absolutely friendly measures can be traced in, for example, \cite{KLW} and~\cite{PV}.  Fitting measures come from~\cite{Fi2}.}   The absolutely friendly property (in particular, what-is-called the absolutely decaying part of this property) allows us, by definition, to find other points of the fractal away from a small-enough thickened (codimension-one with respect to $\partial X$) hyperplane.  For the technique in this paper to apply, we need to require $d \geq 4$, so that the variety has dimension at least $3$.  Now recall in our proof that there is at most one line or point to miss in any $BB_i$.  This line and the normal line $\mN_p$ (lying in $\mE$) of the center $p$ of $BB_i $ form a $3$-space $\mS$, which by Gram-Schmidt is spanned by orthonormal vectors along $\mN_p$ and in $T_p(\tilde{\partial} X)$.  Gram-Schmidt further gives us a vector $v \in T_p(\tilde{\partial} X)$ normal to $\mS$.  Now thicken $\mS$ in the $v$ and $-v$ directions.  We may move away from the thickening in either the $v$ or $-v$ directions.  Combining this ability to find a center for Player $A$'s ball that lies on the fractal away from the thickening with the technique of this paper, we can replace $\partial X$ with (the pullback under some suitable coordinate charts of) the support of an absolutely decaying measure (which, of course, lies in $\partial X$) in Theorems~\ref{thmBALightCone} and~\ref{thmBALevelSurface} and still retain the conclusion (except that $\a$ may be smaller and may depend on the fractal).\footnote{The reason for our use of Lemma~\ref{lemmClosetoLinear} is to allow us to regard (up to some very small error that we called $\varepsilon$) the variety as Euclidean space locally.  Since the game is played locally as mentioned above, one would expect that these two techniques combine together.  We should note that we thicken enough to, not only miss the set $L'$ from Section~\ref{secEssenseProofLW}, but also miss three times the radius of Player $A$'s ball.  Moreover, we no longer consider $B_i$, but the thickening is considered with respect to the ball with the same center as $B_i$, albeit with radius $(1 - 3 \a c_\pi^2) \rho(B_i)$--picking a center under such restrictions guarantees that $AAA_i$ lies in $B_i$.}    In Corollaries~\ref{corBALightCone} and~\ref{corBALevelSurface}, we need to further restrict to absolutely friendly and fitting measures (see Section~1.1 of~\cite{ET} for an indication of why) to conclude that the badly approximable vectors in the fractal have the same Hausdorff dimension as the fractal.\footnote{Note that the integer lattice points that we use to approximate with are the same regardless of what fractal we restrict our badly approximable vectors to.}  But, even this more restrictive class of such measures gives rise to a familiar litany of fractals:  the Cantor set, the Koch curve, the Sierpinski gasket, and so on  (see Corollary 5.3 of~\cite{Fi2} and Theorem 2.3 of~\cite{KLW} for more details).  The restriction to these fractals significantly extends our Diophantine result, but we should note that the winning parameter may be strictly less than $1/2$, as shown in Section 5 of~\cite{Fi} for some of these fractals lying in Euclidean space.

Another way to generalize our results is to consider another variant of Schmidt games.  Let us consider absolute-winning Schmidt games, for which our proof technique only allows generalization in the light-cone case and in the $d=2$ level-surface case.\footnote{Absolute winning is not meaningful for $d=1$.}  This variant of the game is introduced, along with strong-winning Schmidt games, in Section 1 of~\cite{Mc}.  More details, including the definition, can be found in that paper.  To see why we can generalize in the aforementioned cases, note that, for absolute winning, we take $\a = \b$ (and $0<\b<1/3$ is arbitrary) and, since we are playing for Player $A$, agree to take $\rho(A_i) = \b \rho(B_i)$.  Now, with $\a = \b$, Lemmas~\ref{lemmcprimecorrectLW} and~\ref{lemmcprimecorrect} imply that $A_i$ is big enough to cover the requisite-sized ball $B$ around the one normalized point of $P_i$--for absolute winning, the goal of Player $A$ is not to miss this point, but to contain this point and this little ball $B$ around this point.  Now if $B$ does not meet $\partial X$, Player $A$ can freely pick (according to the rules of the absolute game) because $\cap B_i$ lies on $\partial X$ and thus misses $B$.  If $B$ does meet $\partial X$, then, since $A_i$ is much larger than $B$, pick $A_i$ to contain $B \cap \partial X$.  Hence, we can conclude that our set of badly approximable vectors is absolutely winning.

\subsection{Badly approximable vectors and dynamical systems}  In Euclidean space, the notion of badly approximable vectors (or linear forms) relates, in one way, to the theory of dynamical systems via toral translations (a well-known relation, stated in the Introduction of~\cite{BHKV}, for example).  In a similar way, our notion of badly approximable vectors relates to toral translations of higher rank.  But, with us, the higher-rank action is no longer given by the whole lattice $\ZZ^{d+1}$ because, as stated in Theorem~\ref{theoFullLatticeTooBig}, there are no such badly approximable vectors; our action is, instead, given by the intersection of this lattice with a hypersurface, namely a natural, geometric restriction.  Thus, under our natural action, we see that there exists a correspondence between being badly approximable and having a nondense orbit and, what is more, the set of such has zero measure, but has full Hausdorff dimension and is strong winning, as in the Euclidean case (Theorem~1.1 and Section~5.1 of~\cite{ET}).  Other (related) papers on badly approximable and winning include~\cite{Sch2}, \cite{Sch3}, \cite{Da}, \cite{Fi}, \cite{Fi2}, \cite{KW}, \cite{KW2}, \cite{Mc}, and~\cite{T5}.  On the (purely) dynamical side, papers on nondense orbits and winning include~\cite{Da2}, \cite{BBFK}, \cite{BFK}, \cite{Fa}, \cite{KW3}, and~\cite{T4}.

\subsection{Questions}  Our results have raised a few questions.  First, the question of whether the $d \geq 3$ level-surface case of our results generalizes to absolute winning is open.  Since our proof requires lines in this case, not just points, it does not immediately generalize; however, the lines requirement may be an artifact of the proof.  Second, the question of whether the $d=2$ and $d=3$ cases of our results generalize for the fractals mentioned above is open--note that this question is not meaningful for $d=1$, as the variety on the cube is just a finite set of points.  Third, what happens for $d=1$ in the level-surface case beyond what is shown in Theorem~\ref{thmDrCor}?  
Fourth, are Theorems~\ref{thmBALevelSurface} and~\ref{thmBALightCone}  still true if one uses a different approximating function $\psi(t)$?  And finally, fifth, how do absolute winning and fractals relate?  To play absolute-winning games on fractals, one may need to find a more restrictive class of fractals because the ability to find points of the fractal away from thickened codimension-one hyperplanes only exists for small thickenings, but, to play an absolute game, one needs to thicken enough to cover $A_i$, which can be quite big (as much as $\rho(B_i)/3$).\footnote{One must cover $A_i$ in order to find a center for $B_{i+1}$.}

\section{Appendix}  

In this section, we collect some technical lemmas.  

\subsection{Lemmas on winning sets}  
An $n$-dimensional manifold $M$ (with or without boundary) is metrizable.\footnote{We do not assume that $M$ is second countable.}  Pick a metric on $M$ and impose the restriction that $M$ be a complete metric space.  Hence, one can play a Schmidt game on $M$.  Also impose the restriction that, for every point $p$ of $M$, there exists an open neighborhood $U$ containing $p$ and a bilipschitz homeomorphism $\varphi := \varphi_p: U \rightarrow \varphi(U) \subset \RR^n$ if $p$ is an interior point or $\varphi := \varphi_p: U \rightarrow \varphi(U) \subset \HH^n$ if $p$ is a boundary point (the metric on $\RR^n$ or $\HH^n$ is given by $\|\cdot\|$).\footnote{To obtain $\dim(S)=n$ for an $\a$-winning set $S$ of $M$, we only need one bilipschitz homeomorphism from some open set of $M$--it is not necessary to obtain one for every point of $M$.}   Such a collection of bilipschitz homeomorphisms could form an atlas, but this is not necessary.  Let $(M, \{\varphi_p\})$ denote such a manifold with such maps.  Let $0 < \a <1$, and let $\dim(\cdot)$ denote the Hausdorff dimension.  In Lemma~\ref{lemmWinningFullHDManifolds}, we show that, like winning subsets of Euclidean space, an $\a$-winning subset of such a manifold has full Hausdorff dimension; this lemma requires the observation that a bilipschitz homeomorphism preserves winning sets:

\begin{lemm}\label{lemmLocalBilipWinningSetsweak}
Let $\varphi: U \rightarrow \varphi(U) \subset \RR^n$ be a bilipschitz homeomorphism with constant $c_\varphi \geq 1$.  Let $U' \subset U$ be a closed ball such that $\varphi(U')$ is bounded in $\RR^n$, and let $S \subset U'$ be an $\a$-winning set of $U'$.  Then $\varphi(S)$ is a $(c_\varphi^{-2}\a)$-winning set of $\varphi(U')$.\footnote{The lemma also holds in general--same proof--for a bilipschitz homeomorphism $\varphi: X \rightarrow \tilde{X}$ and an $\a$-winning set $S \subset X$ where $X, \tilde{X}$ are complete metric spaces.} 
\end{lemm}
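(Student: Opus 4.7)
The plan is to have Player $A$ on $\varphi(U')$ simulate an auxiliary $\a$-game on $U'$ via $\varphi^{-1}$, outsourcing the actual choice of center to Player $A$'s winning strategy for $S$. Given Player $B$'s ball $B_i \subset \varphi(U')$ with center $p_i$ and radius $r_i$, I would define the simulated ball
\[
\tilde{B}_i := B(\varphi^{-1}(p_i),\, r_i/c_\varphi) \subset U';
\]
since $\varphi$ is $c_\varphi$-Lipschitz, $\varphi(\tilde{B}_i) \subset B_i$. I would feed $\tilde{B}_i$ into the $\a$-winning strategy for $S$ on $U'$ to obtain $\tilde{A}_i \subset \tilde{B}_i$ with $\rho(\tilde{A}_i) = \a r_i/c_\varphi$, and then let Player $A$ on $\varphi(U')$ play
\[
A_i := B(\varphi(c(\tilde{A}_i)),\, c_\varphi^{-2}\a\, r_i).
\]

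I would then make two nesting checks. First, $A_i \subset B_i$: since $\tilde{A}_i \subset \tilde{B}_i$ with the stipulated radii, $\|c(\tilde{A}_i) - \varphi^{-1}(p_i)\| \leq (1-\a) r_i/c_\varphi$, and the $c_\varphi$-Lipschitz bound on $\varphi$ then gives $\|\varphi(c(\tilde{A}_i)) - p_i\| \leq (1-\a) r_i$, so every point of $A_i$ lies within $(1-\a)r_i + c_\varphi^{-2}\a r_i \leq r_i$ of $p_i$, the final inequality using $c_\varphi \geq 1$. Second, when Player $B$ responds with $B_{i+1} \subset A_i$ of radius $r_{i+1} = \b c_\varphi^{-2}\a r_i$ and I define $\tilde{B}_{i+1}$ analogously, I must check $\tilde{B}_{i+1} \subset \tilde{A}_i$. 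Since $p_{i+1} \in B_{i+1} \subset A_i$, the distance from $p_{i+1}$ to $\varphi(c(\tilde{A}_i))$ is at most $(1-\b) c_\varphi^{-2}\a r_i$; applying $\varphi^{-1}$ and adding the radius $r_{i+1}/c_\varphi$ of $\tilde{B}_{i+1}$ gives a total distance from any point of $\tilde{B}_{i+1}$ to $c(\tilde{A}_i)$ of at most $c_\varphi^{-1}\a r_i\bigl[(1-\b) + \b c_\varphi^{-2}\bigr] \leq \a r_i/c_\varphi = \rho(\tilde{A}_i)$, again using $c_\varphi^{-2} \leq 1$. The ratio $\rho(\tilde{B}_{i+1})/\rho(\tilde{A}_i) = \b c_\varphi^{-2}$ is a legal Player $B$ move in some $(\a,\b'')$-game on $U'$, which is acceptable because $S$ is $\a$-winning for every $\b'' \in (0,1)$.

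The radii $\rho(B_i)$ decrease geometrically to zero, so $\cap_i B_i$ and $\cap_i \tilde{B}_i$ are each single points; the latter, call it $x_\infty$, lies in $S$ by construction, and $\varphi(\tilde{B}_i) \subset B_i$ for every $i$ forces $\varphi(x_\infty) \in \cap_i B_i$, whence $\cap_i B_i = \{\varphi(x_\infty)\} \subset \varphi(S)$, showing $\varphi(S)$ is $(c_\varphi^{-2}\a)$-winning on $\varphi(U')$. The main obstacle is the two-sided bookkeeping in the checks above: the factor $1/c_\varphi$ in the definition of $\tilde{B}_i$ produces exactly one power of $c_\varphi$ worth of slack for each of the inclusions $A_i \subset B_i$ and $\tilde{B}_{i+1} \subset \tilde{A}_i$, and the factor $c_\varphi^{-2}$ in front of $\a$ is precisely what is needed to absorb both Lipschitz distortions simultaneously; any weaker loss would break one of the inclusions. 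The argument nowhere used that the target is $\RR^n$ beyond the triangle inequality, which is why the statement extends without change to bilipschitz maps between arbitrary complete metric spaces as noted in the footnote.
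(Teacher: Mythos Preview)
Your argument is correct and is essentially the same as the paper's: both transfer the game through $\varphi^{-1}$ by shrinking radii by a factor $c_\varphi^{-1}$ on each passage, thereby simulating an $(\a,\,c_\varphi^{-2}\b)$-game on $U'$ and invoking the $\a$-winning strategy for $S$. The only difference is cosmetic (your tildes sit on the $U'$-side rather than the $\varphi(U')$-side) and in level of detail: you spell out the two nesting verifications $A_i\subset B_i$ and $\tilde{B}_{i+1}\subset\tilde{A}_i$ explicitly, whereas the paper compresses these into the remarks ``$\tilde{A}_1\subset\varphi(A_1)\subset\tilde{B}_1$'' and ``the radius of $B_2$ is determined in two ways in this proof---and they agree.''
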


\begin{proof}
The constant $0<\a<1$ is fixed, but the constant $0<\b<1$ is arbitrary.  We play a $(c_\varphi^{-2} \a, \b)$-game on $\varphi(U')$.  Player $B$ picks $\tilde{B}_1\subset \varphi(U')$.  Then there exists a unique closed ball $B_1:=B(\varphi^{-1}(c(\tilde{B}_1)), c_\varphi^{-1} \rho(\tilde{B}_1)) \subset \varphi^{-1}(\tilde{B}_1).$  Player $A$ uses the winning strategy for the $(\a, c_\varphi^{-2}\b)$-game on $U'$ to choose $A_1 \subset B_1$.  

Now there exists a unique closed ball $\tilde{A_1} := B(\varphi(c(A_1)), c_\varphi^{-1} \rho(A_1)) \subset \varphi(A_1) \subset \tilde{B}_1$.  Note that $\rho(\tilde{A}_1) = c_\varphi^{-2} \a \rho(\tilde{B}_1)$.  The ball $\tilde{A_1}$ is Player $A$'s choice for the game on $\varphi(U')$.  Player $B$ now picks $\tilde{B_2} \subset \tilde{A}_1$ (note that the radius of $B_2$ is determined in two ways in this proof--and they agree) and, by induction, one repeats the above to show that $\cap_i \tilde{B}_i = \varphi(\cap_i B_i) \in \varphi(S),$ thereby implying the desired result.
\end{proof}

\begin{lemm}\label{lemmWinningFullHDManifolds}
Let $S$ be an $\a$-winning subset of $(M, \{\varphi_p\})$.  Then $\dim(S)$ is equal to $n$, the dimension of $M$ (more precisely and stronger, given any open neighborhood $V$ of $M$, $\dim(S \cap V) =n$.)
\end{lemm}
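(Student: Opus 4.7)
The plan is to reduce the lemma to Schmidt's classical theorem, namely that $\a$-winning subsets of $\RR^n$ have full Hausdorff dimension~\cite{Sch2}, via the bilipschitz charts $\varphi_p$ together with Lemma~\ref{lemmLocalBilipWinningSetsweak}. The upper bound $\dim(S \cap V) \leq n$ is immediate because $M$ is an $n$-manifold, so I will concentrate on the lower bound for an arbitrary open $V \subset M$. To begin, I will pick an interior point $p \in V$ (the interior of any manifold with boundary is dense, so such a point exists) together with its chart $\varphi_p : U \to \varphi_p(U) \subset \RR^n$, and then shrink to a closed ball $U' := B(p,r) \subset V \cap U$. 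Since $\varphi_p$ is bilipschitz, $\varphi_p(U')$ is a bounded subset of $\RR^n$ that contains a Euclidean ball of positive radius.

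The central step is to deduce, from the hypothesis that $S$ is $\a$-winning on $M$, that $S \cap U'$ is $\a$-winning on the complete metric subspace $U'$. My strategy is simulation: given Player $B$'s first $U'$-ball $B_1$, I will extend it to the ambient $M$-ball $\tilde{B}_1$ of the same center and radius, invoke Player $A$'s winning strategy on $M$ to obtain a response $\tilde{A}_1 \subset \tilde{B}_1$, and declare Player $A$'s move in the $U'$-game to be $A_1 := \tilde{A}_1 \cap U'$; iterating, the nested intersection $\bigcap_i \tilde{A}_i$ produced by the winning strategy on $M$ is a single point of $S$ that lies in $U'$. Once this is in place, Lemma~\ref{lemmLocalBilipWinningSetsweak} converts the $\a$-winning property of $S \cap U'$ into the $(c_{\varphi_p}^{-2}\a)$-winning property of $\varphi_p(S \cap U')$ inside the bounded Euclidean set $\varphi_p(U')$; Schmidt's theorem then gives $\dim(\varphi_p(S \cap U')) = n$, and bilipschitz invariance of Hausdorff dimension yields $\dim(S \cap U') = n$, whence $\dim(S \cap V) = n$.

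The step I expect to be the main obstacle is the simulation itself: the ambient $M$-ball $\tilde{B}_1$ need not be contained in $U'$, so Player $A$'s winning response in $M$ might have its center outside $U'$ and thereby fail to describe a legitimate $U'$-move of the same center and radius. I intend to handle this either by restricting attention to first moves $B_1$ whose centers lie at $M$-distance at least $2\rho(B_1)$ from $\partial U'$, so that every simulated $M$-ball automatically remains inside $U'$, which suffices for the Hausdorff dimension conclusion because Schmidt's Cantor-set argument only requires winning from some shrinking family of initial balls; or by modifying Player $A$'s strategy to reinsert the center into $U'$ at each round at the cost of a bounded multiplicative factor in the winning parameter. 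Either adjustment costs at most a constant factor in the winning parameter and leaves the final equality $\dim(S \cap V) = n$ intact.
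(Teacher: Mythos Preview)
Your outline is correct and shares its skeleton with the paper's proof: localize to a closed ball $U'$ inside a chart, transport via Lemma~\ref{lemmLocalBilipWinningSetsweak}, and finish with Schmidt. The genuine difference is in the last step. You invoke Schmidt's Cantor-set construction directly, using only that one has a winning strategy from initial balls sitting well inside $\varphi_p(U')$; this works, but you are reaching inside Schmidt's argument rather than citing the theorem, since as stated Schmidt's result concerns winning subsets of $\RR^n$, not of a bounded piece of it. The paper instead inserts an extra step: it fixes a closed Euclidean ball $C$ strictly inside $\varphi(U')$, shows that $(C\cap\varphi(S))\cup(\RR^n\setminus C)$ is winning on all of $\RR^n$ (play freely until the balls are small enough either to lie inside $\varphi(U')$ or to miss $C$ entirely, then use the $\varphi(U')$-strategy in the first case), and then intersects countably many isometric translates tiling $\RR^n$ by copies of $C$; the resulting set is winning on $\RR^n$, so Schmidt's theorem applies as a black box and forces full dimension already inside $C$. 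Your route is shorter; the paper's is more modular. One small caution: your first proposed fix for the simulation obstacle (restrict to first moves well inside $U'$) does not literally establish that $S\cap U'$ is $\a$-winning on $U'$ for \emph{all} first moves, so Lemma~\ref{lemmLocalBilipWinningSetsweak} does not apply as written---you would have to carry the ``well-inside'' restriction through the bilipschitz transfer by hand, which is routine. The paper itself glosses over exactly this point when passing from $M$ to $U'$ and only becomes careful at the later extension from $\varphi(U')$ to $\RR^n$, so your instinct to flag it is sound.
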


\begin{proof}
All balls of $\RR^n$ in this proof are $\|\cdot\|$-balls.  
Pick a point $p \in M$.  Let us first consider $p$ to not be on the boundary of $M$.  Let $U$ be an open neighborhood of $p$ for which we have a bilipschitz homeomorphism $\varphi: U \rightarrow \varphi(U) \subset \RR^n$--we may shrink $U$ so that $U$ does not meet the boundary of $M$.  Now we may assume that $\varphi(U)$ is bounded in $\RR^n$ or, otherwise, replace $U$ with an open ball around $p$ contained in the preimage of some open ball around $\varphi(p)$ contained in $\varphi(U)$.  Let $U' \subset U$ be a slightly smaller, closed ball and $U'' \subset U'$ an open ball with the same center as $U'$, albeit with $1/2$ the radius.  Let $C \subset \varphi(U'')$ be a closed ball of $\RR^n$.  

Let $C' := \varphi^{-1}(C)$.  Since $\varphi(U')$ is a closed, bounded subset of $\RR^n$, it is compact and so is its preimage $U'$; thus $U'$ is a complete metric space on which we can play the game.  Then we claim that the set $(C' \cap S) \cup U' \backslash C'$ is $\a$-winning.  Player $B$ picks $B_1 \subset U'$.  Now Player $A$ uses the winning strategy (for the game on $M$) to obtain that $\cap B_i \in S$.  Also, $\cap B_i \in U'$.  Therefore, $S \cap U'$ is $\a$-winning; whence $(C' \cap S) \cup U' \backslash C' \supset S \cap U'$ is $\a$-winning for the game played on $U'$.
Let $c_\varphi \geq 1$ be the bilipschitz constant for $\varphi$.  Now Lemma~\ref{lemmLocalBilipWinningSetsweak} implies that $\varphi((C' \cap S) \cup U' \backslash C')$ is $(c^{-2}_\varphi\a)$-winning for the game played on $\varphi(U')$.

Moreover, we claim that the set $(C \cap \varphi(S)) \cup \RR^n \backslash C$ is $(c^{-2}_\varphi\a)$-winning for a game played on $\RR^n$.  Note that the closest that any point of $C$ is to $\varphi(\partial U')$ is $\frac 1 {2 \rho(U') c_\varphi}$; freely play the game until the diameter of the balls are smaller than this distance.  Without loss of generality, we may assume that $B_1$ satisfies this diameter restriction.  If $B_1 \subset \varphi(U')$, then Player $A$ uses the winning strategy from the game on $\varphi(U')$; otherwise, Player $A$ plays freely.  This shows that $(C \cap \varphi(S)) \cup \RR^n \backslash C$ is a $(c^{-2}_\varphi\a)$-winning subset of $\RR^n$.

Now, we can tile $\RR^n$ using translations $\psi_j$ of $C$; these are isometries, and thus $\cap_j \psi_j \big(C \cap \varphi(S)) \cup \RR^n \backslash C\big)$ is $(c^{-2}_\varphi\a)$-winning.  Since winning subsets of Euclidean space have full Hausdorff dimension, we have that $\dim(C \cap \varphi(S)) =n$.  Since Hausdorff dimension is also preserved under bilipschitz maps, we have that $\dim(C' \cap S) = n.$  This already implies that $\dim(S) =n$.

Finally, let us consider $p$ to be on the boundary of $M$.  Let $U$ be an open neighborhood of $p$ for which we have a bilipschitz homeomorphism $\varphi: U \rightarrow \varphi(U) \subset \HH^n$.  Repeat the above with $\HH^n$ replacing $\RR^n$ (and taking care that the balls $C, U', U''$ are either centered around $p$ or $\varphi(p)$) up to the point where one has shown that $(C \cap \varphi(S)) \cup \HH^n \backslash C$ is a $(c^{-2}_\varphi\a)$-winning subset of $\HH^n$.  Let $-\HH^n$ denote the reflection of $\HH^n$ across its boundary hyperplane.  Now we play a game on $-\HH^n \cup (C \cap \varphi(S)) \cup \HH^n \backslash C$.  Player $B$ picks $B_1$.  If $c(B_1) \in \HH^n$, then Player $A$ picks $A_1$ so that it completely lies in $\HH^n$.  Then $B_2$ is picked and Player $A$ uses the winning strategy for the game on $\HH^n$.  Otherwise, $c(B_1)$ lies in $-\HH^n$ away from the boundary hyperplane.  Consequently, Player $A$ picks $A_1$ to lie completely in $-\HH^n$ and henceforth plays freely; whence, $-\HH^n \cup (C \cap \varphi(S)) \cup \HH^n \backslash C$ is a $(c^{-2}_\varphi\a)$-winning subset of $\RR^n$.  As before, we can tile $\RR^n$ using $C$, which implies $\dim(C' \cap S)=n$.

\end{proof}

\subsection{Geometric lemmas}  Let $\|\cdot\|_q$ be a norm on $\RR^k$ given by a positive-definite quadratic form $q=a_1 Y^2_1 +\cdots + a_k Y^2_k$, and let $c_{2q}>0$ be the constant such that \[\frac 1 {c_{2q}} \|\cdot\|_q \leq \|\cdot\|_2 \leq c_{2q} \|\cdot\|_q.\]  For any real number $r >0$, define the set $S_r := \{v \in \RR^k \mid \|v\|_q = r\}$; note that this set is an ellipsoid--a bounded $k-1$-dimensional smooth manifold, which is a closed subset of $\RR^k$ and thus compact.

\begin{lemm}\label{lemmEllipsoidMissesLine}
Let $r>0$ be a real number.  Then there exists a real number $c_q>0$, depending only on $q$, such that, for all $w \not\sim v \in S_r$ and all real  numbers $\g \geq 0$, we have \[\|w - \g v\|_q > c_q \|w - v \|_q.\]
\end{lemm}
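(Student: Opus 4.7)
The plan is to exploit the fact that $\|\cdot\|_q$ is induced by the positive-definite inner product $\langle y, y' \rangle_q := a_1 y_1 y'_1 + \cdots + a_k y_k y'_k$, so that $(\RR^k, \langle \cdot, \cdot \rangle_q)$ is a Euclidean space in which $S_r$ is a genuine sphere of radius $r$. The inequality then reduces to elementary spherical trigonometry, and I expect to obtain the universal value $c_q = 1/2$ (so the dependence on $q$ is vacuous).

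First I would fix $w \not\sim v \in S_r$ and compute the foot of the perpendicular from $w$ to the line $\RR v$: the quadratic $\gamma \mapsto \|w-\gamma v\|_q^2 = r^2 - 2\gamma \langle w, v \rangle_q + \gamma^2 r^2$ is minimized at $\gamma^* := \langle w, v \rangle_q / \|v\|_q^2$. Since $w \not\sim v$, the Cauchy--Schwarz inequality for $\langle \cdot, \cdot \rangle_q$ is strict, giving $|\gamma^*| < 1$. Then I would split into two cases according to the sign of $\gamma^*$.

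In the case $\gamma^* \geq 0$, I would use the orthogonal decomposition $w = \gamma^* v + w^{\perp}$ (with $w^{\perp} \perp_q v$), so that $\|w^{\perp}\|_q^2 = r^2(1-(\gamma^*)^2)$ and, by Pythagoras, $\|w - v\|_q^2 = (\gamma^*-1)^2 r^2 + \|w^{\perp}\|_q^2 = 2r^2(1-\gamma^*)$. Since $\inf_{\gamma \geq 0} \|w - \gamma v\|_q = \|w^{\perp}\|_q$ in this case, the ratio squared equals $(1+\gamma^*)/2 \geq 1/2$. In the case $\gamma^* < 0$, the function $\gamma \mapsto \|w-\gamma v\|_q^2$ is increasing on $[0,\infty)$, so $\inf_{\gamma \geq 0}\|w-\gamma v\|_q = \|w\|_q = r$, while $\|w - v\|_q^2 = 2r^2(1-\gamma^*) < 4r^2$ because $\gamma^* > -1$; hence the ratio exceeds $1/2$ strictly.

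Combining the two cases yields $\|w - \gamma v\|_q > \tfrac{1}{2}\|w - v\|_q$ for every $\gamma \geq 0$, so the lemma follows with $c_q = 1/2$. The only potentially delicate point is ensuring strict inequality at the boundary of Case~2 (where $\gamma^* \to -1$), but this is precisely ruled out by the hypothesis $w \not\sim v$ via strict Cauchy--Schwarz, so no genuine obstacle arises.
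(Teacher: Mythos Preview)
Your argument is correct, and in fact cleaner than the paper's. The key observation you make---that $\|\cdot\|_q$ comes from the inner product $\langle y,y'\rangle_q = \sum a_i y_i y'_i$, so $S_r$ is a round sphere in that Euclidean structure---lets you reduce to a two-line Cauchy--Schwarz/Pythagoras computation and extract the explicit, $q$-independent constant $c_q = 1/2$. The paper instead argues by contradiction: it assumes no uniform $c_q$ exists, takes a sequence with $c_q = 1/n$, and uses compactness of $S_r \times S_r \times [-2,2]^2$ to pass to a limit $(w_0,v_0,\gamma_0)$; the case $w_0=v_0$ then requires a separate geometric argument about the angle between $v_n$ and the outward normal $N_{v_n}$ to $S_r$ (this is where the dependence on $q$ enters via $c_{2q}$ and $a=\max a_i$). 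The paper even remarks afterward that when all coefficients of $q$ coincide the lemma is ``obvious'' by planar trigonometry on the sphere---your proof shows this observation extends to arbitrary $q$ once one works in the $q$-inner product, which the paper did not exploit. What you lose is nothing; what you gain is an explicit constant and a proof that transfers verbatim to any inner-product norm, whereas the paper's compactness approach is what generalizes (with more work) to the sup-like norms in the companion Lemma~\ref{lemmCubeMissesLine}, where no inner product is available.
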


\begin{proof}
Assume not.  Then, for all $c_q >0$, there exist $w \not\sim v \in S_r$ and there exists $\g \geq 0$ such that \[\|w - \g v\|_q \leq c_q \|w - v \|_q.\]  Now $c_q \|w - v \|_q \leq 2c_qr$ and $\|w - \g v\|_q \geq \big|\|w\|_q - \g\|v\|_q\big|= r\big|1-\g\big|.$  Hence, \[\big|1-\g\big| \leq 2c_q.\]

Now $S_r$ and the interval $[-2,2]$ are compact metric spaces, and thus $S:= S_r \times S_r \times [-2, 2] \times [-2,2]$ is compact and metrizable.  Letting $c_q = n^{-1}$ for all $n \in \NN$ creates a sequence in $S$ satisfying  \begin{eqnarray}\label{eqnNormConvert2}\frac{\|w - \g v\|_q} {\|w - v \|_q}\leq c_q.\end{eqnarray}  For the limit of any subsequence, $c_q =0$ and $\g = 1$.  Pick a convergent subsequence.  Let $(w_0, v_0)$ be the limit point in $S_r \times S_r$.  If $w_0 \neq v_0$, then we have $1 \leq 0$, a contradiction.

If $w_0 = v_0$, then, for elements of the subsequence $(w_n, v_n)$ with large enough index $n$, $\|w_n - v_n \|_q$ is small, and thus $w_n$ lies very nearly on the tangent space of $S_r$ at $v_n$; denote this tangent space by $T_{v_n}$.  Let $L_n$ denote the line segment from the terminal point of $w_n$ to the terminal point of $v_n$.  The line segment $L_n$ almost lies in $T_{v_n}$.  Let $N_{v_n}$ denote the outward unit normal vector at $v_n$ to $S_r$ in $\RR^k$.

Let $\cdot$ denote the usual inner product:  given vectors $V:=(V_1, \cdots, V_k)$ and $W:=(W_1, \cdots, W_k)$, then $V \cdot W := V_1W_1 + \cdots +V_kW_k$.  Now the angle (with respect to the inner product $\cdot$) of a vector $u:=(u_1, \cdots, u_k) \in S_r$ with $N_u$ is bounded uniformly (i.e. the bound depends only on $q$) away from being orthogonal.  More precisely, \[\frac{u}{\|u\|_2} \cdot \frac{N_{u}}{{\|N_u\|_2}}  =  \frac {\frac 1 {r^2} (u_1, \cdots, u_k) \cdot (2 a_1 u_1, \cdots, 2 a_k u_k)}{\frac 1 {r^2}\|u\|_2 \ \|(2 a_1 u_1, \cdots, 2 a_k u_k)\|_2} \geq \frac 1 {c_{2 q} \sqrt{a}}\] where $a = \max(a_1, \cdots, a_k)$.  (Note that $\|(a_1 u_1, \cdots,  a_k u_k)\|_2^2 \leq ar^2$.)

The vectors $N_{v_n}$ and $L_n$ (their common initial point is the terminal point of $v_n$) determine a $2$-plane (with origin this terminal point of $v_n$).  Let $N_n$ be the unit vector in this $2$-plane orthogonal (with respect to $\cdot$) to $L_n$ nearest $N_n$; therefore, as $n \rightarrow \infty$, the angle between $N_n$ and $N_{v_n}$ approaches zero.  For $n$ large enough, the cosine of the angle between $N_n$ and $v_n$ is greater than $\frac 1 {2 c_{2 q}\sqrt{a}}$.   Let $B_2(w_n, R)$ denote the $\|\cdot\|_2$-ball of radius $R$ around $w_n$; likewise, let $B_q(w_n, R)$ be the $\|\cdot\|_q$-ball.  From the proceeding, there is a factor $k_0 > 0$, depending only on $\frac 1 {2 c_{2 q}\sqrt{a}}$ (i.e. $q$), such that $B_2(w_n, k_0 \|w_n - v_n\|_2)$ does not meet $\g v_n$ for all $\g \geq 0$.\footnote{The vectors $v_n$, $N_n$, and $L_n$, in general, determine a $3$-space (with origin the terminal point of $v_n$), but the worst case would be when they determine a $2$-space; see Figure~\ref{figworstcase}, which shows, via planar trigonometry, that $k_0$ is independent of $\|w_n - v_n\|_2$ once the angle is bounded.   Also note that the intersection of $B_2(w_n, R)$ with this $3$-space is a $3$-dimensional $\|\cdot\|_2$-ball with around $w_n$ with radius $R$.  Since $\g v_n$ lies in this $3$-space, $B_2(w_n, R)$ meets $\g v_n$ if and only if the intersection of $B_2(w_n, R)$ with this $3$-space meets $\g v_n$.}  

\begin{figure}[h] 
   \centering
   \includegraphics[width=1.0\textwidth]{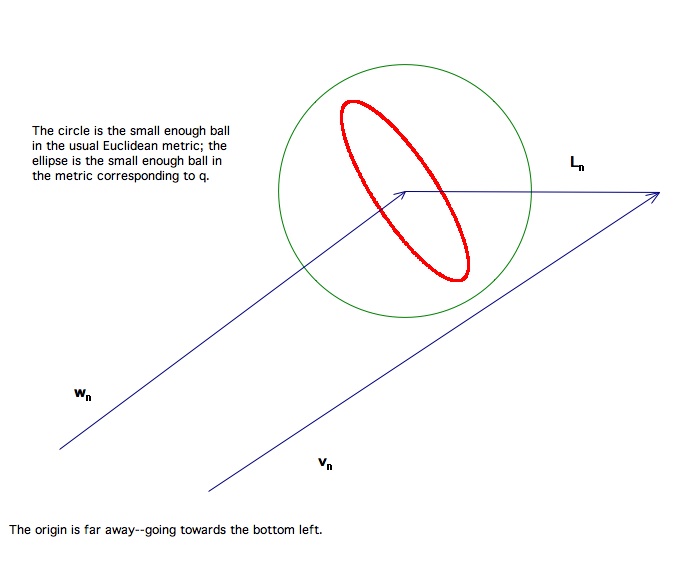} 
    \caption{The worst case.}
   \label{figworstcase}
\end{figure}

Therefore, $B_2(w_n, \frac{k_0}{c_{2q}} \|w_n - v_n\|_q)$ does not meet $\g v_n$ for all $\g \geq 0$.  Further dividing the radius by $c_{2q}$ allows us to inscribe $B_q(w_n, \frac{k_0}{c^2_{2q}} \|w_n - v_n\|_q)$ into $B_2(w_n, \frac{k_0}{c_{2q}} \|w_n - v_n\|_q)$ (see Figure~\ref{figworstcase}).  Finally, it follows that $\|w_n - \g v_n\|_q \geq \frac{k_0}{c^2_{2q}} \|w_n - v_n\|_q$ for all $\g\geq 0$ and all $n$ large enough.  Letting $n \rightarrow \infty$ in (\ref{eqnNormConvert2}) yields $\frac{k_0}{c^2_{2q}} \leq 0$, a contradiction.

\end{proof}

\begin{rema}
If $q$ is a positive-definite form where all the coefficients are the same real number, then the lemma is obvious since $S_r$ is a $k-1$-sphere.  That any two vectors $v \not\sim w \in S_r$ lie on some great circle can be seen by intersecting with the obvious plane.  Then planar trigonometry yields the lemma.
\end{rema}

The analogous lemma holds also for the sup norm and other like norms:  explicitly, for the norm $\|\cdot\|_s$ defined as follows: $\|v\|_s = \max(\b_1 |v_1|, \cdots, \b_k |v_k|)$ for some positive real numbers $\b_i$ and where the $v_i$s are the components of $v$.  
\begin{lemm}\label{lemmCubeMissesLine}  
Let $r>0$ be a real number.  Then there exists a real number $c>0$, depending only on $\|\cdot\|_s$, such that, for all $w \not\sim v \in \{u \in \RR^k \mid \|u\|_s = r\}$ and all real  numbers $\g \geq 0$, we have \[\|w - \g v\|_s > c \|w - v \|_s.\]
\end{lemm}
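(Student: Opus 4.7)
The plan is to mirror the compactness-style argument used in the proof of Lemma~\ref{lemmEllipsoidMissesLine}, exploiting the piecewise-flat structure of the box $S := \{u \in \RR^k \mid \|u\|_s = r\}$. Suppose, for contradiction, that no such constant $c$ exists. Then we obtain sequences $v_n \not\sim w_n \in S$ and $\gamma_n \geq 0$ such that $\|w_n - \gamma_n v_n\|_s / \|w_n - v_n\|_s \to 0$. Since $\|w_n - v_n\|_s \leq 2r$, the numerator also tends to $0$. Because $\|\gamma_n v_n\|_s = \gamma_n r$ and $\gamma_n v_n \to w_n$ with $\|w_n\|_s = r$, we conclude $\gamma_n \to 1$; in particular $\gamma_n \in [0,2]$ eventually. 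Passing to a convergent subsequence in the compact space $S \times S \times [0,2]$, we obtain $(v_n, w_n, \gamma_n) \to (v_\infty, w_\infty, 1)$ with $v_\infty = w_\infty$.

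Since $S$ is the boundary of a box (a finite union of affine hyperplane pieces each normal to some coordinate direction $\pm e_i$), passing to a further subsequence we may assume $v_n$ lies on a fixed face $F_1 = \{u \mid \beta_i u_i = c_i\}$ (with $|c_i| = r$) and $w_n$ lies on a fixed face $F_2 = \{u \mid \beta_j u_j = c_j\}$, where both faces contain $v_\infty$. If $F_1 = F_2$ (so $i = j$) then $(v_n)_i = (w_n)_i$, so the $i$-th coordinate gives $\beta_i|\gamma_n(v_n)_i - (w_n)_i| = r|\gamma_n - 1|$, and for every $\ell \neq i$, the triangle inequality yields $\beta_\ell|(v_n)_\ell - (w_n)_\ell| \leq \beta_\ell|\gamma_n(v_n)_\ell - (w_n)_\ell| + |\gamma_n - 1| \cdot \beta_\ell|(v_n)_\ell| \leq 2\|\gamma_n v_n - w_n\|_s$. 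Taking the maximum over $\ell$ gives $\|w_n - v_n\|_s \leq 2\|\gamma_n v_n - w_n\|_s$, contradicting the ratio hypothesis.

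The delicate case is $F_1 \neq F_2$, where $v_\infty$ lies on the edge $F_1 \cap F_2$. Setting $\alpha_n := r - \beta_i|(w_n)_i| \geq 0$ and $\mu_n := r - \beta_j|(v_n)_j| \geq 0$ (both tending to $0$), projection onto coordinate $i$ gives $\|\gamma_n v_n - w_n\|_s \geq |(\gamma_n - 1)r + \alpha_n|$, while projection onto coordinate $j$ gives $\|\gamma_n v_n - w_n\|_s \geq |(\gamma_n - 1)r - \gamma_n \mu_n|$; subtracting these bounds via $|a| + |b| \geq |a - b|$ yields $2\|\gamma_n v_n - w_n\|_s \geq \alpha_n + \gamma_n \mu_n$, which in turn bounds $r|\gamma_n - 1|$ by a multiple of $\|\gamma_n v_n - w_n\|_s$. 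Repeating the triangle-inequality argument of the previous paragraph coordinate by coordinate then bounds $\|v_n - w_n\|_s$ by a universal multiple of $\|\gamma_n v_n - w_n\|_s$, again contradicting the ratio hypothesis. I expect this multi-face case to be the only real obstacle; it amounts to checking that the outward normal directions at the two limiting faces, being distinct coordinate vectors $e_i$ and $e_j$, supply two independent linear constraints strong enough to control $\|v_n - w_n\|_s$ in terms of $\|\gamma_n v_n - w_n\|_s$.
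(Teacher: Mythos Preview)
Your proof is correct and more elementary than the paper's. The paper adapts the geometric argument of Lemma~\ref{lemmEllipsoidMissesLine}: it passes to a subsequence on a single face $\mathcal{F}$, notes that the outward normal $N$ of $\mathcal{F}$ is a coordinate vector, bounds the cosine $u\cdot N/\|u\|_2$ away from zero for $u\in\mathcal{F}$, and then reuses the normal/tangent geometry and ball-inscription estimates from the ellipsoid case. You instead work directly with coordinate projections and the triangle inequality, and you are more careful about the possibility that $v_n$ and $w_n$ lie on different faces (the paper tacitly assumes both sequences can be put on one face, which is not automatic when the common limit lies on an edge). In fact your same-face estimate already contains the whole proof and the two-face case is unnecessary: the bound $r|\gamma_n-1|\le\|\gamma_n v_n-w_n\|_s$ is just the reverse triangle inequality (no face hypothesis needed), after which your coordinatewise triangle inequality gives $\beta_\ell|(v_n)_\ell-(w_n)_\ell|\le 2\|\gamma_n v_n-w_n\|_s$ for \emph{every} $\ell$, hence $\|v_n-w_n\|_s\le 2\|\gamma_n v_n-w_n\|_s$; this two-line argument works for any norm and yields any $c<1/2$. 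One small omission: you should explicitly dispatch the easy case $v_\infty\ne w_\infty$ (where the ratio tends to $1$) before asserting $v_\infty=w_\infty$.
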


\begin{proof}
The first part of the proof is identical to that of Lemma~\ref{lemmEllipsoidMissesLine}.  Starting with letting $w_0 = v_0$, we simplify as follows.  Now $w_0$ lies on some face $\mF$ of $S:=\{u \in \RR^k \mid \|u\|_s = r\}$ ($S$ is the $k-1$-dimensional shell of a $k$-dimensional box in $\RR^k$) .  The face is compact; thus we may assume that our convergent subsequence lies only in $\mF$ by taking a subsequence.  The outward unit normal vector $N$ of the affine hyperplane which is determined by $\mF$ is one of the standard basis vectors of $\RR^k$.  The angle computation involving the dot product for $u \in \mF$ reduces to \[\frac{u/r}{\|u/r\|_2} \cdot N  \geq  \frac {1/\b} {\|u/r\|_2} \geq \frac 1 {c}\] where $\b:= \max(\b_1, \cdots, \b_k)$ and the positive constant $c$ exists because $\|\cdot\|_s$ and $\|\cdot\|_2$ are equivalent.

Now $L_n$ is a segment in $\mF$ (since the face is convex), which is orthogonal to $N$.  The desired result now follows in the analogous way.\end{proof}

Let $d\geq2$ be a natural number and $H:= \{w \in \RR^{d} \mid Q(w) = m\}$ for some $m \in \RR \backslash \{0\}$ where $Q:=\a_1Y_1^2 + \cdots \a_d Y_d^2$ is a nondegenerate quadratic form (it does not matter whether this form is positive definite or indefinite).  Since $m$ is never zero, $H$ has no singular points and thus is a hypersurface or, equivalently, a codimension 1, closed, isometrically embedded submanifold (under inclusion) of $\RR^d$ with Riemannian metric induced by the usual dot product on $\RR^d$ (which corresponds to the norm $\|\cdot\|_2$).\footnote{Since $H$ is locally the graph of some smooth function, the results that follow in the rest of this section can be proved for any hypersurface $H$ using very similar proofs, which are, essentially, applications of Taylor approximation.  Note that the local ``shape'' of any hypersurface is approximated by the graph of a quadratic polynomial with coefficients involving the principle curvatures.}  For any point $p \in H$, there are exactly two unit normal vectors, and these lie on the same (affine) line $\mN_p$ in $\RR^d$.  Also, let $B_2(p, r)$ denote the closed ball of $\RR^d$ around the point $p \in \RR^d$ of radius length $r$ given with respect to the norm $\|\cdot\|_2$ and $\partial B_2(p, r)$, its boundary sphere.

For use in this paper, we need only local results: let us fix some large closed ball around the origin of $\RR^d$ and intersect it with $H$; denote this intersection by $H_0$.  We use the fact that $H_0$ is compact to simply the proofs, but, for quadratic varieties, one can make the same statements for $H$.

\begin{lemm}\label{lemm2PlaneIntH}
Let $\mP$ denote a $2$-plane containing $\mN_p$.  There exists a real number $R >0$ (independent of both $\mP$ and $p$) such that for all $p \in H_0$ and all $\mP$, we have $\partial B_2(p, R) \cap \mP  \cap H$ is two distinct points lying on distinct half-planes of $\mP \backslash \mN_p$. 
\end{lemm}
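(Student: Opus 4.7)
The plan is to use the implicit function theorem to parameterize $H \cap \mP$ locally near $p$ as a smooth graph tangent to $\mP \cap T_p H$, then to count intersections with the circle $\partial B_2(p,R)$ via a simple Taylor estimate, and finally to pass from pointwise to uniform control using compactness of the parameter space of pairs $(p,\mP)$.

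Fix $p \in H_0$ and a $2$-plane $\mP$ containing $\mN_p$. I would introduce orthonormal coordinates $(x,y)$ in $\mP$ centered at $p$ with $e_2 := \nabla Q(p)/\|\nabla Q(p)\|_2$ along $\mN_p$ and $e_1$ a unit vector along $\mP \cap T_p H$. Writing $B$ for the polarization of $Q$ and using $B(p,e_1) = 0$, the equation $Q(p + xe_1 + ye_2) = m$ becomes
\[ F(x,y) \;:=\; Q(e_1)\,x^2 + 2B(e_1,e_2)\,xy + Q(e_2)\,y^2 + \|\nabla Q(p)\|_2\, y \;=\; 0. \]
Since $m \neq 0$ forces $p \neq 0$, compactness of $H_0$ gives a uniform positive lower bound on $\|\nabla Q(p)\|_2$, so $\partial_y F(0,0) \neq 0$. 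The implicit function theorem produces a smooth $y = f(x)$ on some interval with $f(0) = 0$ and $f'(0) = 0$; differentiating $F(x,f(x)) \equiv 0$ twice shows $|f''|$ is bounded by a constant depending only on the coefficients of $Q$ and the lower bound on $\|\nabla Q(\cdot)\|_2$ on $H_0$. Hence $|f(x)| \leq Mx^2$ on some $[-\delta,\delta]$ with $M$ and $\delta$ uniform in $(p,\mP)$.

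Next, put $g(x) := x^2 + f(x)^2$; then $g'(x) = 2x + 2f(x)f'(x) = 2x(1 + O(x^2))$, so $g$ is strictly decreasing on $[-\delta',0]$ and strictly increasing on $[0,\delta']$ for some uniform $\delta' > 0$. Consequently, for every sufficiently small $R$, the equation $g(x) = R^2$ has exactly one positive and one negative solution, yielding the two points of $H \cap \mP \cap \partial B_2(p,R)$, one in each half-plane of $\mP \setminus \mN_p$. I also must prevent a ``second branch'' of the conic $F=0$ from entering $B_2(p,R)$: when $Q(e_2) \neq 0$, the quadratic in $y$ has a second root at $y = -\|\nabla Q(p)\|_2/Q(e_2) + O(x)$; when $Q(e_2) = 0$ or $Q(e_1) = 0$ the conic degenerates into a union of two lines, the extraneous one at distance $\|\nabla Q(p)\|_2 / \sqrt{4B(e_1,e_2)^2 + Q(e_2)^2}$ from $p$. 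In every case the numerator is bounded below and the denominator bounded above, so the extraneous component lies at distance uniformly bounded away from $p$, and forcing $R$ below this bound excludes it.

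The main obstacle I anticipate is giving a clean uniform statement across the degenerate configurations, where the global algebraic structure of $F = 0$ changes type (irreducible conic vs.\ union of lines) as $Q(e_1)$ or $Q(e_2)$ crosses zero. My strategy to avoid a case-by-case bookkeeping is to work exclusively with the implicit function $f$ — whose local behavior near the origin is robust under these degenerations — and to invoke compactness of the parameter space $\{(p,\mP) : p \in H_0,\ \mP \supset \mN_p\}$ (a bundle over $H_0$ with compact projective fiber) together with continuity of the relevant estimates to extract the single uniform radius $R > 0$ asserted by the lemma.
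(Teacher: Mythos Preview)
Your argument is correct, and it takes a genuinely different route from the paper's. The paper argues softly: since $H$ is an embedded hypersurface, $B_2(p,R)\cap H$ is (for small $R$) a topological disk, so $B_2(p,R)\cap\mP\cap H$ is a single smooth arc through $p$; a separate compactness argument shows that $\mN_p\setminus\{p\}$ does not meet $H$ again within a uniform distance of $p$, so the arc cannot recross the normal line and its two endpoints on $\partial B_2(p,R)$ must lie in opposite half-planes. Uniformity of $R$ is then extracted by yet another compactness argument on $H_0$. No explicit equation for $H\cap\mP$ is ever written down.

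Your approach, by contrast, writes the plane section as the explicit conic $F(x,y)=0$, solves for the branch $y=f(x)$ through the origin via the implicit function theorem with uniform quantitative bounds coming from the uniform lower bound on $\|\nabla Q(p)\|_2$, and then counts circle intersections by monotonicity of $g(x)=x^2+f(x)^2$. This buys you something the paper's proof does not: the estimate $|f(x)|\le Mx^2$ with a uniform $M$ is essentially the content of the paper's next lemma (Lemma~\ref{lemmClosetoLinear}), so you are effectively proving both lemmas at once. The price is the bookkeeping you flagged, namely excluding the extraneous component of the conic; your outline handles this correctly (the second $y$-root, respectively the second line in the degenerate factorization, sits at distance at least a uniform constant times $\|\nabla Q(p)\|_2$ from the origin), though you should note that when $Q(e_2)=0$ but $Q(e_1)\neq 0$ the conic does not split into two lines --- it is an irreducible curve with a single branch through the origin --- so that subcase requires no exclusion at all. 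The paper's argument is cleaner and applies verbatim to an arbitrary smooth hypersurface, whereas yours exploits the quadratic nature of $Q$ to get sharper quantitative control.
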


\begin{proof}
Let $H_1$ be the intersection of a larger closed ball around the origin with $H$ than that ball from $H_0$; thus $H_0 \subset H_1$.  Since $H$ is isometrically embedded in $\RR^d$, it does not contain any self-intersections; therefore, for any $p \in H_0$, there exists a positive distance (in $\RR^d$) between it and any intersection of $\mN_p \backslash \{p\}$ with $H$.\footnote{A self-intersection of an immersed manifold occurs when the distance in the ambient manifold of two distinct points of the immersed manifold is zero.}  If there is no universal positive bound for this distance for all $p \in H_0$, then there exists a sequence of pairs $(p,q) \in H_0 \times H_1$ such that this normal distance is approaching zero.  Since $H_0 \times H_1$ is compact and metrizable, there is a convergent subsequence $(p_n,q_n) \in H_0 \times H_1$ for $n \in \NN$; hence there exists a subsequential limit $p_0$ and $q_0 \in H_0$ whose normal distance is zero--a self-intersection point of $H$.\footnote{We assert that $p_0$ and $q_0$ are distinct points of $H$ (its geodesic distance in $H$ cannot be zero).  Assume $p_0=q_0$.  Then both $p_n$ and $q_n$ approach $p_0$ (which implies that $p_n$ and $q_n$ are asymptotically on the same path-component of $H$).  Locally at $p\in H$, $H$ is the graph of a smooth function.  Taylor approximation says  that the difference between $H$ and $T_{p}H$ shrinks quadratically as the Euclidean ball around $p$ shrinks linearly, and the shrinking depends only on the derivatives, so that in any small enough ball inside the shrinking ball, we have quadratic shrinking. Consider a shrinking Euclidean ball around $p_0$; since this ball will contain some pair $(p_n, q_n)$, we have $\mN_{p_n}$ approaches being parallel with an element of $T_{p_n}H$, a contradiction.}     This is a contradiction.

Now, for every point $p \in H_0$, there exists a positive radius $R$ small enough so that $B_2(p,R) \cap H$ is a closed ball of $H$ (the isometric embedding implies that we have the subspace topology on $H$), which is diffeomorphic, under some smooth map $\varphi$, to a closed ball of $d-1$-dimensional Euclidean space; and thus the boundary sphere maps injectively into $H$ under $\varphi^{-1}$.  Now $B_2(p,R)\cap \mP \cap H$ is an arcsegment of a $1$-dimensional smooth manifold (i.e. a smooth curve).  By the proceeding paragraph, if necessary, shrink $R$ below the positive universal bound on normal distance so that this smooth curve does not meet $\mN_p \backslash \{p\}$ in $B_2(p,R)$; therefore, $\varphi(B_2(p,R)\cap \mP \cap H)$ is still a smooth curve in $d-1$-dimensional Euclidean space with distinct endpoints on the boundary of $\varphi(B_2(p,R))$, and $R$ does not depend on $\mP$.  Since the curve does not meet $\mN_p \backslash \{p\}$, its two endpoints must lie on distinct half-planes of $\mP \backslash \mN_p$.

If we replace $R$ with a smaller positive real number in the proceeding paragraph, the same result holds.  Therefore, let $R_p$ be the supremum over all such radii for the point $p \in H_0$.  If there is no universal lower bound on $R_p$ for all $p \in H_0$, then there exists a sequence of $p \in  H_0$ such that $R_p \rightarrow 0$.  Since $H_0$ is compact,  a subsequence converges to a limit $p \in H_0$ such that $R_p=0$, a contradiction of the previous paragraph.  Therefore, there exists a positive $R$ independent of both $p \in H_0$ and $\mP$.
\end{proof}

Now consider an affine hyperplane $\eLL$ in $\RR^d$ (such hyperplanes are always codimension $1$) and its $\varepsilon > 0$ thickening $\eLL^{(\varepsilon)}$.  On a small enough ball, $H_0$ does not leave a slightly thickened hyperplane:

\begin{lemm}\label{lemmClosetoLinear}
For every (small) $\varepsilon >0$, there exists a $R >0$ such that, for all $p \in H_0$ and all $0 < r \leq R$, we have \[B_2(p, r) \cap H \cap \RR^d \backslash\eLL^{(\varepsilon r)} = \emptyset\] where $\eLL$ is the affine hyperplane containing $p$ and whose normal vectors at $p$ all lie in $\mN_p$.
\end{lemm}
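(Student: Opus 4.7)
The plan is a standard Taylor-expansion argument, made uniform over $H_0$ by compactness. Near any $p \in H_0$, the hypersurface $H$ is a smooth graph over its tangent hyperplane $\eLL = T_pH$, and the first-order Taylor data of this graph is precisely $\eLL$. So the gap between $H$ and $\eLL$ grows quadratically with the distance from $p$, from which the linear bound $\varepsilon r$ follows once $r$ is small enough.

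More concretely, first I would observe that $\nabla Q(w) = (2\a_1 w_1, \ldots, 2\a_d w_d)$ is nonzero for every $w \in H$, since $Q(w) = m \neq 0$ forces some $\a_i w_i^2$ to be nonzero and hence $\a_i w_i \neq 0$. Hence, by the implicit function theorem, in a neighborhood $U_p$ of any $p \in H_0$, one may choose a coordinate index $i = i(p)$ with $|\a_i p_i|$ maximal and express $H \cap U_p$ as the graph $w_i = f_p(w_1, \ldots, \hat{w}_i, \ldots, w_d)$ of a smooth function $f_p$. The tangent hyperplane $\eLL$ at $p$ is then the graph of the affine part $L_p(y) := f_p(p') + \nabla f_p(p') \cdot (y - p')$ of $f_p$ (where $p'$ denotes $p$ with its $i$-th coordinate removed). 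For any $w \in H \cap U_p$ projecting to $y$, the orthogonal distance from $w$ to $\eLL$ is at most $|f_p(y) - L_p(y)|$, which by Taylor's theorem is bounded by $\tfrac{1}{2} \sup \|D^2 f_p\| \cdot \|y - p'\|_2^2$ on a small neighborhood of $p'$.

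To make this uniform over $H_0$, I would invoke compactness: since $\|\nabla Q\|_2$ is continuous and nonvanishing on $H$, it is bounded below on $H_0$, ensuring that some coordinate $i$ has $|\a_i p_i|$ uniformly bounded below as $p$ varies. A finite open cover of $H_0$ by neighborhoods on which a fixed $i$ works (together with uniform quantitative control in the implicit function theorem) yields a uniform radius $r_0 > 0$ and a uniform constant $C > 0$ such that $\|D^2 f_p\| \leq C$ on $B_2(p, r_0) \cap H$ for every $p \in H_0$. Setting $R := \min(r_0, \varepsilon/(2C))$, and noting that the projection $w \mapsto y$ is distance-decreasing so $\|y - p'\|_2 \leq \|w - p\|_2 \leq r$ whenever $w \in B_2(p, r)$, we get $|f_p(y) - L_p(y)| \leq \tfrac{1}{2} C r^2 \leq \tfrac{\varepsilon}{2} r < \varepsilon r$, so $w \in \eLL^{(\varepsilon r)}$, as required.

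The main obstacle is the uniformity in $p$. The implicit-function representation depends on a choice of index $i = i(p)$ that can jump as $p$ varies, and the Taylor constant depends on both derivatives of $f_p$ and the size of the neighborhood on which it exists. Compactness of $H_0$ combined with the uniform lower bound on $\|\nabla Q\|_2$ handles this, but it is the only nonformal step; once the uniform constant $C$ is in hand, the choice of $R$ is immediate.
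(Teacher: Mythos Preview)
Your proposal is correct and follows essentially the same approach as the paper: a Taylor expansion shows the gap between $H$ and its tangent hyperplane $\eLL$ is $O(\|Y\|_2^2)$, and compactness of $H_0$ (via the uniform lower bound on $|\nabla Q|$, equivalently on the largest coordinate of $p$) makes the implied constant uniform, so choosing $R \lesssim \varepsilon/C$ suffices. The paper works more concretely with the explicit quadratic form (so the second-order Taylor expansion is exact) and additionally appeals to Lemma~\ref{lemm2PlaneIntH} to track intersection points on $\partial B_2(p,R)$, but this extra step is not needed for the bare statement and your more direct route is equally valid.
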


\begin{proof}
Let $f := Q - m$ and $p:=(p_1, \cdots, p_d)$.  Then $f \in \sC^\infty(\RR^d)$.  Let $Y:=(Y_1, \cdots, Y_d) \in \RR^d$ (close to the origin).  Then Taylor approximation states that $f(p+Y) = 2(\a_1 p_1Y_1 + \cdots + \a_d p_d Y_d) + (\a_1 Y_1^2 + \cdots + \a_d Y_d^2)$ up to a remainder function $r(Y)$ where \[\lim_{Y \rightarrow \bf{0}} \frac {r(Y)}{\|Y\|_2^2} =0.\]  Also the tangent space at $p$ of $H$ (which is $\eLL$) has equation $0=T(p+Y) = 2(\a_1 p_1Y_1 + \cdots + \a_d p_d Y_d)$.  Since $m \neq 0$, there is some sup-norm ball in $\RR^d$ around the origin of positive radius that does not meet $H$.  Thus the component of $p$ largest in absolute value is bigger than some positive constant depending on $Q$ and $m$ only; without loss of generality, we may assume that $p_d$ is this component.  A point $p + Y$ is in $H$ if and only if $f(p+Y)=0$.  Thus, with ($p$ regarded as the origin), $p+Y$ has coordinates $(Y_1, \cdots, Y_{d-1}, -\frac {1}{2\a_d p_d }[2(\a_1 p_1Y_1 + \cdots + \a_{d-1} p_{d-1} Y_{d-1}) +(\a_1 Y_1^2 + \cdots + \a_d Y_d^2) + r(Y)])$.  The corresponding point on $\eLL$ has coordinates  $(Y_1, \cdots, Y_{d-1}, -\frac {1}{\a_d p_d }(\a_1 p_1Y_1 + \cdots + \a_{d-1} p_{d-1} Y_{d-1})).$  Thus the least distance between $\eLL$ and $H$ is bounded from above by $C\|Y\|^2_2$ for some constant $C \geq 1$ (depending on $Q$ and $m$, for $\|Y\|$ small enough, and independent of $p$).

Let $\mP$ be a $2$-plane containing $\mN_p$ (see Figure~\ref{figZoomupVar}).  By Lemma~\ref{lemm2PlaneIntH}, for small enough $R$ (independent of $\mP$ and $p$), we have $\partial B_2(p, R) \cap \mP  \cap H$ is two distinct points $p_1$ and $p_2$, which lie on distinct half-planes of $\mP \backslash \mN_p$.  Choose $R \leq \frac \varepsilon C$.  Then, by the previous paragraph, the distance between $p_1$ and its corresponding point on $\eLL$ is bounded from above by $\varepsilon R$; likewise, the distance between $p_2$ and its corresponding point on $\eLL$ is bounded from above by $\varepsilon R$.\footnote{The least distance between a vector (with initial point $p$) and a hyperplane through $p$ is given by orthogonal projection onto $\mN_p$.}   The only way for $p_1$ and $p_2$ to be near each other is if they are both close to one of the intersection points of $\mN_p$ with $\partial B_2(p, R) \cap \mP$.  Since $\varepsilon$ is small, these points cannot be close to each other.

The results in the previous paragraph are still true if one replaces $R$ with any positive real number $r \leq R$.  

\begin{figure}[h] 
   \centering
   \includegraphics[width=1.0\textwidth]{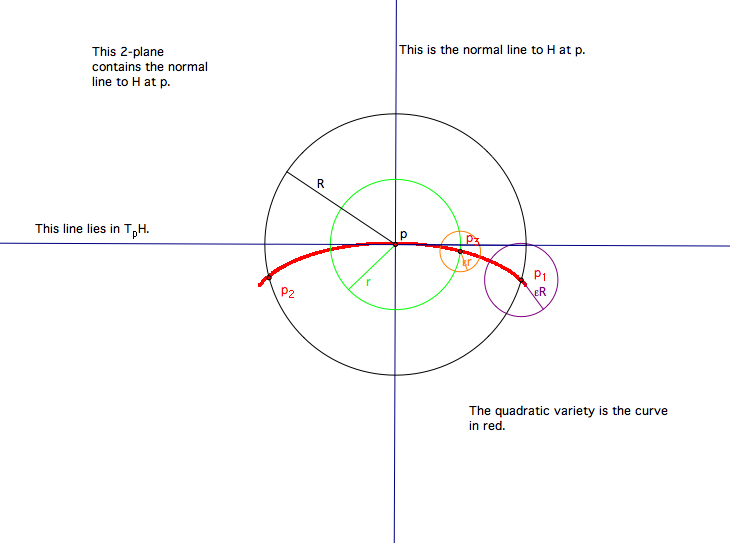} 
    \caption{A zoom-up of the quadratic variety $H$.}
   \label{figZoomupVar}
\end{figure}

Now pick a point $p_3  \in H \cap  B_2(p, R) \backslash \{p\}$.  For some $0 < r \leq R$, it lies on $\partial B_2(p,r)$ and on some $2$-plane $\mP$ containing $\mN_p$.  Consequently, by the above, $p_3$ lies in $\eLL^{(\varepsilon r)}$, which is still true if $r$ is replaced by an even smaller positive real number.  The desired conclusion follows.

\end{proof}

\end{document}